\documentclass[11pt]{amsart}
\usepackage[textwidth=13.3cm,textheight=22.3cm,bindingoffset=4mm]{geometry}
\usepackage{graphicx,comment}
\usepackage[colorlinks]{hyperref}
\usepackage[mathscr,mathcal]{euscript}
\usepackage{microtype}




\usepackage{enumerate}
\usepackage[latin1]{inputenc}
\usepackage{dsfont}
\usepackage{amssymb,amsthm,amsmath}


\usepackage{mathrsfs}

\theoremstyle{plain}
\newtheorem{prop}{Proposition}[section]
\newtheorem{lemma}[prop]{Lemma}

\newtheorem{thm}[prop]{Theorem}
\newtheorem*{thmAnn}{Theorem A}
\newtheorem*{thmBnn}{Theorem B}

\newtheorem{cor}[prop]{Corollary}
\newtheorem*{cornn}{Corollary}

\theoremstyle{definition}

\theoremstyle{remark}

\setlength{\parindent}{0pt}
\setlength{\parskip}{1ex}
\newcommand{\open}{\mathcal O}
\newcommand{\ok}{\mathcal E}

\newcommand{\homsp}{\mathcal X}

\DeclareMathOperator{\diam}{diam}



\DeclareMathOperator{\End}{End}


\DeclareMathOperator{\SL}{SL}


\DeclareMathOperator{\supp}{supp}




\DeclareMathOperator{\Ad}{Ad}
\DeclareMathOperator{\ad}{ad}




\DeclareMathOperator{\height}{ht}


\newcommand\N{\mathbb{N}}

\newcommand\R{\mathbb{R}}
\newcommand\Z{\mathbb{Z}}

\newcommand{\mc}[1]{\mathcal #1}
\newcommand{\mf}[1]{\mathfrak #1}

\newcommand{\mft}[2]{\mathfrak #1\mathfrak #2}
\newcommand{\wt}{\widetilde}

\newcommand{\eps}{\varepsilon}






\DeclareMathOperator{\id}{id}

\newcommand{\sceq}{\mathrel{\mathop:}=}


\begin{document}

\title[Escape of mass and entropy for diagonal flows]{Escape of mass and entropy for diagonal flows in real rank one situations}
\author{M.\@ Einsiedler}
\author{S.\@ Kadyrov}
\author{A.\@ Pohl}
\address[ME]{Departement Mathematik, ETH Z\"urich, R\"amistrasse 101, 8092 Z\"urich, Switzerland}
\address[SK]{School of Mathematics, University of Bristol, Bristol, UK}
\address[AP]{Mathematisches Institut, Georg-August-Universit\"at G\"ottingen,  Bunsenstr. 3-5, 37073 G\"ottingen}
\email[ME]{manfred.einsiedler@math.ethz.ch}
\email[SK]{shirali.kadyrov@bristol.ac.uk}
\email[AP]{pohl@uni-math.gwdg.de}
\keywords{escape of mass, entropy, diagonal flows, Hausdorff dimension}
\subjclass[2010]{Primary: 37A35, Secondary: 28D20, 22D40}
\thanks{M.E.\@ acknowledges the support by the SNF (Grant 200021-127145). S.K.\@ acknowledges the support by the EPSRC. A.P.\@ acknowledges the support by the SNF (Grant 200021-127145) and the Volkswagen Foundation}

\begin{abstract}
Let $G$ be a connected semisimple Lie group of real rank $1$ with finite center, let $\Gamma$ be a non-uniform lattice in $G$ and $a$ any diagonalizable element in $G$. We investigate the relation between the metric entropy of $a$ acting on the homogeneous space $\Gamma\backslash G$ and escape of mass. Moreover, we provide bounds on the escaping mass and, as an application, we show that the Hausdorff dimension of the set of orbits (under iteration of $a$) which miss a fixed open set is not full.
\end{abstract}

\maketitle
\tableofcontents

\section{Introduction}

Let $G$ be a connected semisimple ({real}) Lie group of $\R$-rank $1$ with finite center and $\Gamma$ a lattice in $G$. Suppose that
\[
 \homsp \sceq \Gamma\backslash G
\]
denotes the associated homogeneous space. Let $A$ be a one-parameter subgroup consisting of $\R$-diagonalizable elements. Pick an element $\tilde a\in A\smallsetminus \{\id\}$ and consider the right action 
\[
T \colon\left\{
\begin{array}{ccl}
\homsp & \to & \homsp
\\
x & \mapsto &  x\tilde a
\end{array}
\right.
\]
of $\tilde a$ on $\homsp$. Further let $(\mu_n)_{n\in\N}$ be a sequence of $T$-invariant probability measures on $\homsp$ which converges in the weak* topology to the measure $\nu$. 

If $\nu$ is itself a probability measure (which is always the case if $\Gamma$ is cocompact), then upper semi-continuity of metric entropy is well-known, that is
\[
 \limsup_{n\to\infty} h_{\mu_n}(T) \leq h_{\nu}(T).
\]
In this article we investigate the case that $\Gamma$ is non-cocompact and $\nu$ is not a probability measure. We show that if upper semi-continuity does not hold, the amount by which it fails is controlled by the escaping mass. More precisely, the main result can be stated as follows.

\begin{thmAnn}
Let $h_m(T)$ denote the maximal metric entropy of $T$ and suppose that $\nu(\homsp) > 0$. Then 
\[
 \nu(\homsp) h_{\frac{\nu}{\nu(\homsp)}}(T) + \tfrac12 h_m(T) \cdot \left( 1-\nu(\homsp) \right) \geq \limsup_{n\to\infty} h_{\mu_n}(T).
\]
\end{thmAnn}

In \cite{Kadyrov_Pohl} it is shown that the factor $\tfrac12$ is sharp. A consequence of this theorem is the following result about escape of mass, which is of interest on its own.

\begin{cornn}
Suppose that $\limsup h_{\mu_n}(T) \geq c$. Then 
\[
 \nu(\homsp) \geq  \frac{2c}{h_m(T)} -1.
\]
\end{cornn}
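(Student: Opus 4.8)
The corollary is essentially an algebraic consequence of the Theorem, so the plan is to feed the trivial entropy bound $h_{\frac{\nu}{\nu(\homsp)}}(T) \leq h_m(T)$ into the displayed inequality and solve for $\nu(\homsp)$. Concretely, first I would dispose of the degenerate situation: if $c \leq \tfrac12 h_m(T)$, then $\tfrac{2c}{h_m(T)} - 1 \leq 0 \leq \nu(\homsp)$ and there is nothing to prove. So I may assume $c > \tfrac12 h_m(T)$. In that case I claim $\nu(\homsp) > 0$: otherwise all mass escapes, and the Theorem (or its natural limiting form, with the entropy term $\nu(\homsp) h_{\nu/\nu(\homsp)}(T)$ simply absent) yields $\tfrac12 h_m(T) \geq \limsup_n h_{\mu_n}(T) \geq c$, contradicting $c > \tfrac12 h_m(T)$. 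Hence the Theorem applies with a genuine probability measure $\tfrac{\nu}{\nu(\homsp)}$.

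Next I would invoke the definition of $h_m(T)$ as the maximal entropy of $T$, so that $h_{\frac{\nu}{\nu(\homsp)}}(T) \leq h_m(T)$ since $\tfrac{\nu}{\nu(\homsp)}$ is a $T$-invariant probability measure. Substituting this into the conclusion of the Theorem and using the hypothesis $\limsup_n h_{\mu_n}(T) \geq c$ gives
\[
 \nu(\homsp)\, h_m(T) + \tfrac12 h_m(T)\bigl(1 - \nu(\homsp)\bigr) \geq c,
\]
which simplifies to $\tfrac12 h_m(T)\bigl(1 + \nu(\homsp)\bigr) \geq c$. Dividing by $\tfrac12 h_m(T) > 0$ and rearranging yields $\nu(\homsp) \geq \tfrac{2c}{h_m(T)} - 1$, as claimed.

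There is no real obstacle here once the Theorem is in hand; the only point requiring a little care is the case $\nu(\homsp) = 0$, i.e.\ total escape of mass, which is excluded from the hypotheses of the Theorem and must be handled separately as above. (If the paper has already recorded a version of the main inequality valid for $\nu \equiv 0$, one can simply cite it; otherwise the short contradiction argument given above suffices, using only that entropy is nonnegative and that the escaping-mass term $\tfrac12 h_m(T)\cdot 1$ already dominates $\limsup_n h_{\mu_n}(T)$ in that case.)
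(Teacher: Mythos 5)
Your proof is correct and is exactly the intended argument: the paper records this as an immediate consequence of Theorem~\ref{entropycusp} (whose statement, unlike the introduction's version, already covers the case $\nu(\homsp)=0$, so your separate contradiction argument for total escape of mass can simply be replaced by citing it), combined with the bound $h_{\frac{\nu}{\nu(\homsp)}}(T)\leq h_m(T)$ from Proposition~\ref{maxentropy} and the algebraic rearrangement you give.
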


Thus, if the entropy on the sequence  $(\mu_n)$ is high, meaning at least $\tfrac12h_m(T) + \eps$, then not all of the mass can escape and the remaining mass can be bounded quantitively.

For $\homsp=\SL_2(\Z)\backslash \SL_2(\R)$ and $T$ being the time-one map this control on escape of mass is already shown in \cite{ELMV}. For recent  results of this kind in different settings and their applications we refer to \cite{Einsiedler_Kadyrov, Kadyrov, KKLM}. 

In case of equality in the corollary above, Theorem~A yields the following consequence for the remaining normalized measure.

\begin{cornn}
If $\limsup h_{\mu_n}(T) \geq c$ and 
\[
 \nu(\homsp) =  \frac{2c}{h_m(T)} -1 > 0,
\]
then $h_{\frac{\nu}{\nu(\homsp)}}(T)=h_m(T)$ and~$\frac\nu{\nu(\homsp)}$
is the Haar measure on~$\homsp$.
\end{cornn}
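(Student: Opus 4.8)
The plan is to obtain this statement as the equality case of the main Theorem, so the only work is to turn the numerical hypothesis into the reverse of the trivial bound $h_{\nu/\nu(\homsp)}(T)\le h_m(T)$. Abbreviate $t\sceq\nu(\homsp)$, $H\sceq h_m(T)$ and $h\sceq h_{\nu/\nu(\homsp)}(T)$. First I would dispose of the easy inequality: because the $\mu_n$ are $T$-invariant and $T$ is a homeomorphism, testing against $f\circ T$ for $f\in C_c(\homsp)$ shows that the weak* limit $\nu$ is $T$-invariant, and since $0<t\le 1$ (the upper bound being weak* lower semicontinuity of mass on the open set $\homsp$) the normalization $\nu/t$ is a $T$-invariant Borel probability measure; hence $h\le H$ by the very definition of $h_m(T)$ as the maximal entropy.

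For the reverse inequality I would simply substitute the hypotheses into the Theorem. The assumption $t=\tfrac{2c}{H}-1$ rewrites as $c=\tfrac12 H(1+t)$. Plugging this, together with $\limsup_{n\to\infty}h_{\mu_n}(T)\ge c$, into the conclusion $t\,h+\tfrac12 H(1-t)\ge\limsup_{n\to\infty}h_{\mu_n}(T)$ of the Theorem gives $t\,h+\tfrac12 H-\tfrac12 Ht\ge\tfrac12 H+\tfrac12 Ht$; cancelling $\tfrac12 H$ and collecting terms leaves $t\,h\ge Ht$, and dividing by $t>0$ yields $h\ge H$. Combined with the previous paragraph this gives $h=H$, which is the claim.

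I do not anticipate a genuine obstacle, since the entire content lies in the main Theorem and what remains is one line of algebra plus two routine facts: a weak* limit of $T$-invariant probability measures is again $T$-invariant, and every $T$-invariant probability measure has entropy at most $h_m(T)$. The only points worth a moment's care are that $h_m(T)>0$, so that the expression $\tfrac{2c}{H}-1$ in the hypothesis is meaningful — this holds because $\tilde a\ne\id$ is $\R$-diagonalizable and hence $T$ has positive entropy — and that $\nu(\homsp)\le 1$, so that $\nu/\nu(\homsp)$ is honestly a probability measure, which is again weak* lower semicontinuity of mass applied to $\homsp$.
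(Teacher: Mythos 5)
Your proposal is correct and follows exactly the route the paper intends: the paper states this as an immediate consequence of Theorem~\ref{entropycusp}, and your substitution $c=\tfrac12 h_m(T)(1+\nu(\homsp))$ into that inequality, combined with the trivial bound $h_{\frac{\nu}{\nu(\homsp)}}(T)\le h_m(T)$ (after the routine checks that $\nu$ is $T$-invariant with $\nu(\homsp)\le 1$), is precisely the one-line argument being invoked.
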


As an application of these results and the methods for their proofs we show in Section~\ref{notfull} the following observation, 
thereby answering a question of Barak Weiss. Its positive solution is already used in \cite{Hubert_Weiss}.

\begin{thmBnn}
Let $\mc O$ be an open nonempty subset of $\homsp$, and let $\ok$ be the set of points in $\homsp$ whose forward trajectories (forward $A$-orbits) do not intersect $\mc O$. Then the Hausdorff dimension of $\ok$ is strictly smaller than the (Hausdorff) dimension of $\homsp$.
\end{thmBnn}

We outline the strategy of proof for Theorem~A. The key tool for its proof is the existence of a finite partition $\eta$ of $\homsp$ such that for each $T$-invariant probability measure $\mu$ on $\homsp$ the entropy of $\mu$, the entropy of the partition $\eta$ and the mass ``high'' in the cusps of $\homsp$ are seen to be related as in Theorem~A. More precisely, if $\homsp_{>s}$ denotes the part of $\homsp$ above height $s$ (the notion of height is defined in Section~\ref{sec_height} below), then 
\[
 h_{\mu}(T) \leq h_{\mu}(T,\eta) + c_s + \tfrac12 h_m(T) \mu(\homsp_{>s})
\]
with a global constant $c_s$ such that $c_s\to 0$ as $s\to\infty$. We remark that $\eta$ is independent of $\mu$. To achieve this we use a partition of $\homsp$ into a fixed compact part, the part $\homsp_{>s}$ above height $s$, and the strip between the compact part and $\homsp_{>s}$. The compact part is refined into very small sets, depending on the width of the strip, such that this part and the strip do not contribute to entropy. 

The entropy of $\mu$ is estimated from above using the Brin-Katok Lemma, which reduces this task to counting Bowen balls needed to cover some set of fixed positive measure. In Lemma~\ref{coverimp} below we provide a non-trivial bound for this number. In order to be able to establish this result, we translate the situation to Siegel sets in $G$ (which is possible thanks to a result of Garland and Raghunathan \cite{Garland_Raghunathan} on fundamental domains), and conduct a detailed study how nearby trajectories behave high up in the cusp. 

These investigations do not use the classification of $\R$-rank $1$ simple Lie groups. Rather we take advantage of the uniform and easy to manipulate construction of rank $1$ symmetric spaces of noncompact type provided by \cite{CDKR1} and \cite{CDKR2} and the coordinate system of the associated Lie groups adapted to their geometry.

\textit{Acknowledgment.} We thank the anonymous referees for  many valuable comments that helped to improve the presentation of the paper.

\section{Fundamental domains in the cusps}\label{sec_funddom}

Let $A$ be a one-parameter~$\R$-diagonalizable subgroup in $G$ containing the diagonalizable element $\tilde a$ defining the transformation $T$ on $\homsp$ via $x\mapsto x\tilde a$. Let $C = C_A(G)$ denote the centralizer of $A$ in $G$ and let $\mf c$ be its Lie algebra.
Let $\mf g$ denote the Lie algebra of $G$. Since $G$ is of $\R$-rank $1$, there exists a group homomorphism $\alpha\colon A \to (\R_{>0}, \cdot)$
such that with
\[
 \mf g_j \sceq \left\{ X \in \mf g \left\vert\ \forall\, a\in A\colon \Ad_aX = \alpha(a)^{\frac{j}{2}} X \right.\right\}, \quad j \in \{\pm 1, \pm 2\},
\]
we have the direct sum decomposition
\begin{equation}\label{rootspacedecomp}
 \mf g = \mf g_{-2} \oplus \mf g_{-1} \oplus \mf c \oplus \mf g_1 \oplus \mf g_2.
\end{equation}
We choose the homomorphism $\alpha$ such that $\alpha(\tilde a)>1$. The Lie algebra $\mf g$ is the direct product of a simple Lie algebra and a compact one. Unless this simple Lie algebra is isomorphic to $\mft so(1,n)$,  the homomorphism $\alpha$ is then unique and \eqref{rootspacedecomp} is the restricted root space decomposition of $\mf g$. If the simple factor of $\mf g$ is isomorphic to $\mft so(1,n)$ for some $n\in \N$, $n\geq 2$, then there are two choices for $\alpha$.  Depending on the choice, either $\mf g_2$ or $\mf g_1$ is trivial. In this case  \eqref{rootspacedecomp} simplifies to
\[
 \mf g = \mf g_{-1} \oplus \mf c \oplus \mf g_1 \quad\text{resp.}\quad \mf g = \mf g_{-2} \oplus \mf c \oplus \mf g_2,
\]
each of which is the restricted root space decomposition of $\mf g$. The first one corresponds to the Cayley-Klein models of real hyperbolic spaces, the second one to the Poincar\'{e} models. Define $\mf n \sceq \mf g_2 \oplus \mf g_1$ and let $N$ be the connected, simply connected Lie subgroup of $G$ with Lie algebra $\mf n$. By the theorem concerning
Iwasawa decompositions of~$G$, there exists a maximal compact subgroup $K$ of $G$ such that 
\[
 N \times A \times K \to G,\quad (n,a,k) \mapsto nak
\]
is a diffeomorphism. Let 
\[
 M \sceq K \cap C.
\]
For any $s>0$ we set
\[
 A_s \sceq \{ a \in A \mid \alpha(a) > s \}.
\]
Moreover, for any $s>0$ and any compact subset $\eta$ of $N$ we define the Siegel set
\[
 \Omega(s,\eta) \sceq \eta A_s K.
\]
Garland and Raghunathan provide the following result on fundamental domains for the non-cocompact lattice $\Gamma$ in $G$.

\begin{prop}[Theorem~0.6 and 0.7 in \cite{Garland_Raghunathan}]\label{funddomGR}
There exists $s_0 > 0$, a compact subset $\eta_0$ of $N$ and a finite subset $\Xi$ of $G$ such that 
\begin{enumerate}[{\rm (i)}]
\item\label{funddomGRi} $G=\Gamma\Xi\Omega(s_0,\eta_0)$,
\item for all $\xi\in\Xi$, the group $\Gamma\cap \xi N \xi^{-1}$ is a cocompact lattice in $\xi N \xi^{-1}$,
\item for all compact subsets $\eta$ of $N$ the set
\[
 \{ \gamma \in \Gamma \mid \gamma\Xi\Omega(s_0,\eta) \cap \Omega(s_0,\eta) \not=\emptyset \}
\]
is finite,
\item\label{funddomGRiv} for each compact subset $\eta$ of $N$ containing $\eta_0$, there exists $s_1>s_0$ such that for all $\xi_1,\xi_2\in\Xi$ and all $\gamma\in\Gamma$ with $\gamma\xi_1 \Omega(s_0,\eta) \cap \xi_2\Omega(s_1,\eta) \not= \emptyset$ we have $\xi_1 = \xi_2$ and $\gamma\in \xi_1 NM \xi_1^{-1}$.
\end{enumerate}
\end{prop}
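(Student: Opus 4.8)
This statement is the standard reduction theory for a non-uniform lattice in an $\R$-rank one group, and in the paper it is simply quoted from \cite{Garland_Raghunathan}; below I only indicate how such a statement is produced. The plan is to pass to the associated symmetric space $X=G/K$, so that $\Gamma\backslash X$ is a complete finite-volume Riemannian orbifold, and to exploit its thick--thin decomposition. By the Margulis lemma together with finiteness of the volume, $\Gamma\backslash X$ is the union of a compact piece and finitely many ends, and each end is a cusp neighbourhood attached to a $\Gamma$-conjugacy class of minimal (hence proper) parabolic subgroups of $G$. Choosing representatives $P_1,\dots,P_r$ of these classes together with elements conjugating the standard minimal parabolic $P=NAM$ onto them produces, after adjoining a compact set of coset representatives which keeps it finite, the finite set $\Xi$; the covering $G=\Gamma\Xi\Omega(s_0,\eta_0)$ is then obtained once $s_0$ is taken large and $\eta_0$ large enough to also cover the cross-sections of the cusps.

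The cocompactness assertion for $\Gamma\cap\xi N\xi^{-1}$ is where finite volume is really used: deep in a cusp, short geodesic loops generate a nilpotent subgroup (Zassenhaus/Kazhdan--Margulis neighbourhood), so the stabilizer of the cusp is virtually contained in the unipotent radical $\xi N\xi^{-1}$, and for the corresponding end to have finite volume the group $\Gamma\cap\xi N\xi^{-1}$ must in fact be a lattice in $\xi N\xi^{-1}$; since $N$ is simply connected and at most two-step nilpotent, every lattice in it is cocompact (Malcev). Given this, one takes $\eta_0$ to be the closure of a fundamental domain in $N$ for that lattice; the Iwasawa coordinates $G=NAK$ identify the horoball attached to $P_i$ with $\xi_i N A_{s_0} K$, and compactness of its quotient by $\Gamma\cap\xi_i N\xi_i^{-1}$ yields the first three assertions. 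The finiteness in the third assertion is then just properness of the $\Gamma$-action on $G$ combined with the fact that $\Omega(s_0,\eta)$ has relatively compact image in $\Gamma\backslash G$.

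The substantive point is the last assertion, the near-disjointness of the Siegel sets far out in the cusps. Here one uses that, for $s_1$ large, the ``Margulis horoball'' of height $>s_1$ attached to $P_i$ embeds into $\Gamma\backslash G$, and that these embedded horoballs for different $i$ are pairwise disjoint and disjoint from the image of the thick part. If $\gamma\xi_1\Omega(s_0,\eta)$ meets $\xi_2\Omega(s_1,\eta)$, then the common point is $\Gamma$-equivalent to a point lying in the height-$>s_1$ horoball of $P_2$ while also lying in the image of $\xi_1\Omega(s_0,\eta)$; for $s_1$ large this forces $P_1$ and $P_2$ to be $\Gamma$-conjugate, \ie one may take $\xi_1=\xi_2$, and then $\gamma$ stabilizes a horoball of $P_1$ up to a bounded amount, which pins it into the height-preserving part $\xi_1 NM\xi_1^{-1}$ of the parabolic once $s_1$ exceeds that bound (the $A$-direction is excluded because it changes height, and heights are coarsely $\Gamma$-invariant on the thin part). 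The main obstacle is exactly this last step: turning the heuristic ``deep in a cusp only $NM$-translates can overlap'' into the precise statement with an explicit $s_1=s_1(\eta)$ requires quantitative control of the Margulis-lemma constants and of how $\Gamma$-translates of Siegel sets can nest, which is the technical heart of \cite{Garland_Raghunathan} and the reason the theorem is quoted here rather than reproved.
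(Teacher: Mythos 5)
You correctly observe that the paper does not prove this proposition at all: it is imported verbatim from Garland--Raghunathan, and Section~2 of the paper is nothing more than a statement of it with the notational conventions fixed. So there is no ``paper's own proof'' to compare against, and the real question is whether your sketch is a sound account of where such a theorem comes from.

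Your outline is essentially correct and captures the standard geometric route: thick--thin decomposition via the Margulis lemma and finite volume, identification of the ends with $\Gamma$-conjugacy classes of minimal parabolics, cocompactness of $\Gamma\cap\xi N\xi^{-1}$ via Zassenhaus/Kazhdan--Margulis plus Malcev, and disjointness of high horoballs to get the near-disjointness of Siegel sets. Worth noting, though, is that Garland and Raghunathan's own argument is considerably more algebraic in flavor: they work with norms of highest-weight-type vectors under a representation of $G$ (essentially the machinery the present paper recycles in Section~3 via $v_\xi\varrho(\cdot)$ and $\theta_\xi$) and with careful manipulation of Bruhat and Iwasawa coordinates, rather than invoking the Margulis lemma or the Riemannian thick--thin decomposition. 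Both roads lead to the same place; the geometric one is cleaner to state but needs the Margulis lemma as input, while the Garland--Raghunathan approach is self-contained at the level of reduction theory and naturally produces the explicit $s_1=s_1(\eta)$ that you rightly flag as the technical heart.

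One small but genuine slip in your first paragraph: with the paper's convention $A_{s}=\{a\in A\mid\alpha(a)>s\}$, the Siegel set $\Omega(s_0,\eta_0)=\eta_0 A_{s_0}K$ grows as $s_0$ \emph{decreases}. To get the covering $G=\Gamma\Xi\Omega(s_0,\eta_0)$ one therefore needs $s_0$ small (and $\eta_0$ large), not $s_0$ large as you wrote; taking $s_0$ large would leave the thick part of $\Gamma\backslash G$ uncovered. This is purely a sign convention issue and does not affect the rest of the sketch, but in a paper whose height function is calibrated precisely by this convention it is worth stating correctly.
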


For the remainder of this article we fix $s_1 > s_0 > 0$, a compact subset $\eta_0$ of $N$ and a finite subset $\Xi$ of $G$ which satisfy \eqref{funddomGRi}-\eqref{funddomGRiv} of Proposition~\ref{funddomGR} with $\eta \sceq \eta_0$.

The elements of $\Xi$ are a minimal set of representatives for the cusps of
\[
 \homsp \sceq \Gamma\backslash G,
\]
and for each $\xi\in\Xi$, the Siegel set $\xi\Omega(s_1,\eta)$ modulo $\Gamma\cap \xi NM \xi^{-1}$ is a neighborhood of the corresponding cusp of $\homsp$. In the following we will often identify this cusp with its neighborhood  $(\Gamma\cap \xi NM\xi^{-1})\backslash \xi \Omega(s_1,\eta) \subseteq \homsp$, and also refer to the latter one as the cusp represented by $\xi$.

\section{The height function}\label{sec_height}

For each $\xi\in\Xi$, we introduce a height function which measures how far a point $x\in\homsp$ is ``in the cusp represented by $\xi$''. More precisely, the $\xi$-height of $x$ is the maximal value $\alpha(a)$ for an $x$-representative $\xi nak$ in $G=\xi NAK$. The maximum over all $\xi$-heights gives the total height of $x\in\homsp$. For a coordinate-free definition of the height functions, we introduce a representation derived from the adjoint representation. This representation was also used in \cite{Dani}.

For each $\xi\in\Xi$ we set
\[
 L_\xi \sceq \xi NM \xi^{-1}
\]
and denote its Lie algebra by $\mf l_\xi$. Set $\ell \sceq \dim \mf l_\xi$ (which in fact is independent of $\xi$) and let $V$ be the $\ell$-th exterior power of $\mf g$,  
\[
  V \sceq \bigwedge\nolimits^{\!\!\ell} \mf g.
\]
Let $\varrho$ be the right\footnote{When applying $\varrho(g)$ for $g\in G$ to $v\in V$ we will write $v\varrho(g)$ instead of $\varrho(g)v$ to stress that it is a right action.} $G$-action on $V$ given by the $\ell$-th exterior power of 
\[
\Ad \circ\ (\cdot)^{-1} \colon G \to \End(\mf g), \quad g \mapsto \Ad_{g^{-1}},
\]
hence
\[
 \varrho \sceq \bigwedge\nolimits^{\!\!\ell} \big( \Ad \circ\ (\cdot)^{-1} \big) \colon G \to \End(V).
\]
We fix a non-zero element $v_\xi$ in the one-dimensional space
\[
 W_\xi \sceq \bigwedge\nolimits^{\!\!\ell} \mf l_\xi
\]
and let
\[
 \theta_\xi \colon \xi NMA \xi^{-1} \to \R_{>0}
\]
be the unique group homomorphism into the multiplicative group $(\R_{>0},\cdot)$ such that for all $g\in \xi NMA \xi^{-1}$ we have
\[
 v_\xi \varrho(g) = \theta_\xi(g) v_\xi.
\]
One easily shows that  $\theta_\xi(g) =1$ for $g$ in the connected component of $L_\xi$, and
\[
 \theta_\xi(\xi a \xi^{-1}) = \alpha(a)^{-\left(\frac12\dim \mf g_1 +  \dim \mf g_2\right)}
\]
for $a\in A$. Let
\[
 q \sceq \tfrac12\dim \mf g_1 +  \dim \mf g_2.
\]
We choose a $\varrho(K)$-invariant inner product $\langle \cdot, \cdot \rangle$ on $V$ (e.g.\@ induced by the Killing form) and denote its associated norm by $\|\cdot\|$. 

For $\xi\in\Xi$, the \textit{$\xi$-height} of $x\in\homsp$ is defined as
\begin{equation}\label{xiheight}
 \height_\xi(x) \sceq \sup\left\{ \left( \frac{ \| v_\xi \varrho(g) \| }{ \| v_\xi \varrho(\xi) \| }\right)^{-\frac1q} \left\vert\ g\in G,\ x = \Gamma g  \vphantom{\left( \frac{ \| v_\xi \varrho(\gamma g) \| }{ \| v_\xi \varrho(\xi) \| }\right)^{\frac1q}  } \right.\right\}.
\end{equation}
If $g\in G$ is represented as $g=\xi n a k$ with $n\in N$, $a\in A$ and $k\in K$, then by definition
\[
\left( \frac{ \| v_\xi \varrho(g) \| }{ \| v_\xi \varrho(\xi) \| }\right)^{-\frac1q} = \alpha(a).
\]
Hence this value only depends on the $A$-components of $g$ when represented in $\xi NAK (=G)$, of which we may think as an Iwasawa decomposition of $G$ relative to $\xi$.

The \textit{height} of $x\in\homsp$ is
\[
 \height(x) \sceq \max\big\{ \height_\xi(x) \ \big\vert\  \xi\in\Xi \big\}.
\]
For $s>0$ and $\xi\in\Xi$ we set
\[
 \homsp(\xi,s) \sceq \{x\in\homsp\mid \height_\xi(x) > s\}
\]
and
\begin{equation}\label{overs}
 \homsp_{>s} \sceq \{x\in\homsp\mid \height(x) > s\} = \bigcup_{\xi\in\Xi} \homsp(\xi, s).
\end{equation}

In the following we will see that the points in $\homsp(\xi,s)$ correspond to the elements in the Siegel set $\xi\Omega(s,\eta)$. To that end let $B_\delta$ denote the open $\|\cdot\|$-ball in $V$ with radius $\delta > 0$, centered at $0$. We define 
\[
 \delta_\xi(s) \sceq  s^{-q} \| v_\xi \varrho(\xi) \|.
\]

\begin{prop}[Corollary~2.3 in \cite{Dani}]\label{samenorm}
Let $\xi \in \Xi$, $s>0$, and $g\in G$. Then $\Gamma g \in \Gamma\backslash \Gamma\xi\Omega(s,\eta)$ if and only if $v_\xi \varrho(\gamma g) \in B_{\delta_\xi(s)}$ for some $\gamma\in\Gamma$. Further, if $s \geq s_1$ and $\gamma_1,\gamma_2\in\Gamma$ satisfy $v_\xi \varrho(\gamma_jg) \in B_{\delta_\xi(s)}$ for $j=1,2$, then $v_\xi\varrho(\gamma_1 g) \in \{ \pm v_\xi \varrho(\gamma_2 g)\}$.
\end{prop}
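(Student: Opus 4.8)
\emph{Proof plan.} Throughout, write $\eta=\eta_0$ and keep $q=\tfrac12\dim\mf g_1+\dim\mf g_2>0$ as above. The backbone is the norm identity recorded right after \eqref{xiheight}: for $h\in G$ written as $h=\xi nak$ in $G=\xi NAK$ one has $h=(\xi n\xi^{-1})(\xi a\xi^{-1})(\xi k)$, so
\[
 v_\xi\varrho(h)=\theta_\xi(\xi n\xi^{-1})\,\theta_\xi(\xi a\xi^{-1})\,v_\xi\varrho(\xi)\varrho(k)=\alpha(a)^{-q}\,v_\xi\varrho(\xi)\varrho(k),
\]
and hence $\|v_\xi\varrho(h)\|=\alpha(a)^{-q}\|v_\xi\varrho(\xi)\|$ by $\varrho(K)$-invariance of $\|\cdot\|$. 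Since $q>0$ and $\delta_\xi(s)=s^{-q}\|v_\xi\varrho(\xi)\|$, this gives the pointwise equivalence
\[
 v_\xi\varrho(h)\in B_{\delta_\xi(s)}\ \Longleftrightarrow\ a\in A_s\ \Longleftrightarrow\ h\in\xi NA_sK.
\]
All that remains is the passage from $\xi NA_sK$ to the Siegel set $\xi\Omega(s,\eta)=\xi\eta A_sK$ modulo $\Gamma$, together with control of the choice of $\gamma$; for this only Proposition~\ref{funddomGR} is used.

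\emph{First assertion.} For $\Rightarrow$: if $\Gamma g=\Gamma\xi\omega$ with $\omega\in\eta A_sK\subseteq NA_sK$, pick $\gamma\in\Gamma$ with $\gamma g=\xi\omega$ and apply the equivalence above. For $\Leftarrow$: suppose $v_\xi\varrho(\gamma g)\in B_{\delta_\xi(s)}$ and write $\gamma g=\xi nak$ in $G=\xi NAK$; then $\alpha(a)>s$, and it remains to move the $N$-coordinate into $\eta$. I would do this via the observation that the group $\Gamma\cap L_\xi$, with $L_\xi=\xi NM\xi^{-1}$, acts on the $N$-coordinate of $G=\xi NAK$ by left translations and $M$-conjugations while leaving the $A$-coordinate (hence the height) fixed, together with the fact that every element of $N$ lies in some $(\Gamma\cap L_\xi)$-translate of $\eta=\eta_0$; the latter follows from parts~(i) and~(iv) of Proposition~\ref{funddomGR} --- part~(i) covering $G$ and part~(iv) excluding the remaining cusp representatives. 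So there is $\lambda\in\Gamma\cap L_\xi$ with $\lambda\gamma g\in\xi\eta A_sK=\xi\Omega(s,\eta)$, \ie $\Gamma g\in\Gamma\backslash\Gamma\xi\Omega(s,\eta)$.

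\emph{Second assertion.} Let $s\geq s_1$ and suppose $v_\xi\varrho(\gamma_jg)\in B_{\delta_\xi(s)}$ for $j=1,2$. As just shown, pick $\lambda_j\in\Gamma\cap L_\xi$ with $\lambda_j\gamma_jg\in\xi\Omega(s,\eta)$. Since $s\geq s_1>s_0$ we get $\lambda_1\gamma_1g\in\xi\Omega(s_1,\eta)$ and $\lambda_2\gamma_2g\in\xi\Omega(s_0,\eta)$, and $\lambda_1\gamma_1g=\gamma'(\lambda_2\gamma_2g)$ with $\gamma'\sceq(\lambda_1\gamma_1)(\lambda_2\gamma_2)^{-1}\in\Gamma$; hence part~(iv) of Proposition~\ref{funddomGR} (with $\xi_1=\xi_2=\xi$) forces $\gamma'\in\xi NM\xi^{-1}=L_\xi$, and therefore $\beta\sceq\gamma_1\gamma_2^{-1}=\lambda_1^{-1}\gamma'\lambda_2\in\Gamma\cap L_\xi$. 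Since $\beta$ normalizes $L_\xi$, the operator $\varrho(\beta)$ preserves the line $W_\xi=\R v_\xi$, say $v_\xi\varrho(\beta)=c\,v_\xi$ with $c\in\R^\times$; the character $\beta\mapsto c$ of $L_\xi$ is trivial on the identity component $L_\xi^\circ=\xi NM^\circ\xi^{-1}$ --- on $\xi M^\circ\xi^{-1}$ because that group is compact and connected, and on $\xi N\xi^{-1}$ because $N$ acts unipotently under $\Ad$, hence with determinant $1$ on $\mf l_\xi$ --- while $L_\xi/L_\xi^\circ\cong M/M^\circ$ is finite because $M$ is compact; thus $c\in\{\pm1\}$. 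Consequently $v_\xi\varrho(\gamma_1g)=v_\xi\varrho(\beta)\varrho(\gamma_2g)=\pm\,v_\xi\varrho(\gamma_2g)$.

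The norm identity is purely formal; the genuine input --- and the only non-routine step --- is the fundamental-domain information of Garland and Raghunathan, namely that one fixed compact $\eta_0\subseteq N$ simultaneously serves all cusps and that distinct cusps do not overlap above height $s_1$. This is what makes both the implication $\Leftarrow$ in the first assertion and the coset identification $\beta\in\Gamma\cap L_\xi$ in the second go through.
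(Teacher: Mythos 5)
The paper does not prove Proposition~\ref{samenorm}; it cites Corollary~2.3 of Dani outright, so there is no in-text argument to compare yours against. Your proof is correct, and it is worth noting that the two facts you reconstruct from first principles are exactly the two facts from Dani that the paper does quote later, in the proof of Lemma~\ref{stable}: the norm formula $\|v_\xi\varrho(\xi nak)\|=\alpha(a)^{-q}\|v_\xi\varrho(\xi)\|$ (Dani's Lemma~2.2, giving $L_\xi\xi A_sK=\{g:v_\xi\varrho(g)\in B_{\delta_\xi(s)}\}$) and the identity $L_\xi\xi A_{s_1}K=(\Gamma\cap L_\xi)\xi\eta A_{s_1}K$ (Dani's Remark~1.3). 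So your route is essentially the intended one, just made self-contained.

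Two small remarks. First, the step where you deduce ``every element of $N$ lies in some $(\Gamma\cap L_\xi)$-translate of $\eta_0$'' from parts~(i) and~(iv) of Proposition~\ref{funddomGR} is valid but terse: one has to enlarge $\eta_0$ to a compact $\eta'\ni n$, let $s_1'$ be the threshold that (iv) produces for that $\eta'$, take $\alpha(a)>s_1'$, apply (i) and (iv) to conclude $\gamma\in L_\xi$, and then observe (via the uniqueness of the $\xi NAK$ decomposition and the fact that left multiplication by $L_\xi$ fixes the $A$-component) that the resulting $N$-component lies in $\eta_0$ independently of the auxiliary $a$. This last independence is what lets the conclusion descend from $s_1'$ back down to an arbitrary $s>0$, which is needed since the first assertion is stated for all $s>0$; you should make that explicit. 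Second, your character argument for $c\in\{\pm1\}$ is actually slightly more careful than the paper's setup: the paper declares $\theta_\xi$ to take values in $\R_{>0}$, which would force $c=1$, but since $M$ need not be connected the determinant of $\Ad_{m^{-1}}\vert_{\mf l_\xi}$ can a priori be $-1$ on a non-identity component; your finite-quotient argument handles this cleanly and matches the $\pm$ in the statement.
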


Thus
\[
 \homsp(\xi,s) = \Gamma\backslash \Gamma\xi\Omega(s,\eta)
\]
for all $\xi\in\Xi$ and $s>0$. If $s\geq s_1$, the supremum in \eqref{xiheight} is attained. Moreover, by Proposition~\ref{funddomGR}\eqref{funddomGRiv},  
\[
 \homsp(\xi, s)\cap \homsp(\xi',s) =\emptyset
\]
if $\xi\not=\xi'\in \Xi$. Hence the sets $\homsp(\xi, s)$ are then disjoint neighborhoods of the cusps of $\homsp$, and the union in \eqref{overs} is disjoint.

\section{Coordinate system for $G$}

Recall that the Lie algebra $\mf g$ is the direct sum of a simple Lie algebra of rank $1$ and a compact one. Since the height function is right-$\varrho(K)$-invariant and all further considerations are right-$\varrho(K)$-invariant, we can restrict to $\mf g$ being simple. \cite{CDKR1} and \cite{CDKR2} provide a classification-free construction of all Riemannian symmetric spaces of noncompact type and rank one. Their results rely on the choice of a certain coordinate system for real simple Lie groups $G$ of real rank $1$, which allows us to treat all these groups without refering to their classification. In the following we recall this coordinate system, the one for the associated symmetric spaces and some essential formulas.

The semidirect product $NA$ is parametrized by 
\[
 \R_{>0} \times \mf g_2 \times \mf g_1 \to NA,\quad (s,Z,X) \mapsto \exp(Z+X)\cdot a_s,
\]
where we may assume $s\sceq \alpha(a_s)$. The (left) action of $a_s=(s,0,0)\in A$ on $n=(1,Z,X)\in N$ is then given by
\[
 a_sn = (s, sZ, s^{1/2} X).
\]
We define an inner product on $\mf n = \mf g_2 \oplus\mf g_1$ as follows. Let $\mf k$ be the Lie algebra of $K$. Let $\theta$ be a Cartan involution of $\mf g$ such that $\mf k$ is its $1$-eigenspace.  For $X,Y\in\mf n$ we define
\[
 \langle X, Y \rangle \sceq -\frac{1}{\dim\mf g_1 +4 \dim\mf g_2} B(X,\theta Y)
\]
where $B$ is the Killing form of $\mf g$. It is well-known that $\langle\cdot,\cdot\rangle$ is an inner product on $\mf n$.  As in \cite{CDKR1, CDKR2}, we identify $G/K \cong NA \cong \R_{>0}\times\mf g_2 \times \mf g_1$ with 
\[
 D \sceq \left\{ (t,Z,X)_D \in \R\times \mf g_2 \times \mf g_1 \left\vert\ t>\tfrac14|X|^2 \right.\right\}
\]
via
\[
\R_{>0}\times\mf g_2\times\mf g_1  \to  D,\quad (t,Z,X)  \mapsto  (t+\tfrac14|X|^2, Z, X)_D.
\]
We will include the subscript~$D$ when denoting elements~$(\cdot,\cdot,\cdot)_D$  of the symmetric space~$D$
to avoid confusion with elements of the group~$NA$.
The (left) action of an element $s = (t_s,Z_s,X_s)\in NA$ on a point $p=(t_p,Z_p,X_p)_D \in D$ becomes
\[
 s.p = \big(t_st_p + \tfrac14|X_s|^2 + \tfrac12t_s^{1/2} \langle X_s,X_p\rangle, Z_s + t_sZ_p + \tfrac12t_s^{1/2}[X_s,X_p], X_s+t_s^{1/2}X_p\big)_D.
\]
These coordinates of $G/K$ enable us to use \cite{CDKR1, CDKR2}, and they simplify some of the expressions below, in particular the one for the geodesic inversion. To state the geodesic inversion, we define the linear map 
\[
J \colon \mf g_2 \to \End(\mf g_1), \quad Z \mapsto J_Z,
\]
via
\[
 \langle J_ZX, Y\rangle = \langle Z, [X,Y]\rangle \quad\text{for all $X,Y\in \mf g_1$.}
\]
Then the geodesic inversion $\sigma$ of $D$ at $o\sceq (1,0,0)_D$ is given by (see \cite{CDKR2})
\[
 \sigma(t,Z,X)_D = \frac{1}{t^2 + |Z|^2} \big(t, -Z, (-t+J_Z)X\big)_D.
\]
We identify $\sigma$ with an element in $K$ which acts as geodesic inversion {on~$D=G/K$ at~$o$}. Then $G$ has the Bruhat decomposition (\cite[Theorem~6.4]{CDKR2})
\[
 G = NAM \cup NAM\sigma N.
\]
Multiplying this with $\xi\in\Xi$ from the left and $\sigma$ from the right, we get
\[
 G =  \xi NAM\sigma \cup \xi NAMU
\]
with $U\sceq \sigma N \sigma$. This decomposition provides a coordinate system on $G$ adapted to the cusp represented by $\xi$. The set $\xi NAM\sigma$ we call the \textit{small $\xi$-Bruhat cell} and $\xi NAMU$ the \textit{big $\xi$-Bruhat cell}.

The group $M$ is parametrized by the pairs $(\varphi,\psi)$ consisting of the orthogonal endomorphisms $\varphi$ on $\mf g_2$ resp.\@ $\psi$ on $\mf g_1$ such that $\psi(J_ZX) = J_{\varphi(Z)}\psi(X)$ for all $(Z,X)\in\mf g_2\times\mf g_1$. The action of $(\varphi,\psi)\in M$ on $p=(t,Z,X)_D\in D$ is given by 
\[
 (\varphi,\psi).p=(t,\varphi(Z),\psi(X))_D.
\]
By \cite[Proposition~7.1]{CDKR2}, $|J_ZX|=|Z||X|$ for all $Z\in\mf g_2$, $X\in\mf g_1$.

\section{Variation of height}

Suppose that the point $x\in \homsp$ is of big height and its trajectory stays far out for some time. In this section, we provide non-trivial bounds on the unstable components of a group element $g\in G$ representing $x$. In Proposition~\ref{Vbound2} below, this bound implies constraints on the perturbation allowed for $x$ without destroying the qualitative behavior of its trajectory during this time.

\begin{lemma}\label{sigmanormal}
Let $a_t,a_r\in A$, $m\in M$ and $n\in N$ with $n=(1,Z,X)$  such that $\sigma m n a_t \in N a_r K$. Then 
\[
  r = \frac{t}{\left(t+\frac14|X|^2\right)^2 + |Z|^2}.
\]
\end{lemma}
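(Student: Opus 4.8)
The plan is to compute directly, using the explicit action formulas for $NA$ on the domain model $D$ together with the formula for the geodesic inversion $\sigma$. The key observation is that the quantity $\alpha(a)$ for a group element $g$, when we write $g = n' a k$ in the Iwasawa decomposition $G = NAK$, equals the $t$-coordinate of the point $g.o \in D$ minus $\tfrac14$ of the squared norm of its $\mf g_1$-component — more concretely, if $g.o = (t,Z,X) \in D$ then the $A$-component $a$ of $g$ satisfies $\alpha(a) = t - \tfrac14|X|^2$ (this follows from the identification $NA \cong D$, where $(s,Z_0,X_0) \in NA$ maps to $(s + \tfrac14|X_0|^2, Z_0, X_0)$, so the first coordinate in $D$ recovers $s = \alpha(a_s)$ after subtracting $\tfrac14|X_0|^2$; and $K$ fixes $o$, so $g.o$ depends only on the $NA$-part of $g$). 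Hence $r = \alpha(a_r)$ is read off from the $D$-coordinates of $(\sigma m n a_t).o$.

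First I would apply $a_t = (t,0,0)$ to $o = (1,0,0)$, obtaining $a_t.o = (t,0,0) \in D$. Next apply $n = (1,Z,X)$ — as an element of $NA$ its $D$-coordinates are $(1 + \tfrac14|X|^2, Z, X)$ — using the semidirect-product action formula $s.p = (t_s t_p + \tfrac14|X_s|^2 + \tfrac12 t_s^{1/2}\langle X_s,X_p\rangle, \ Z_s + t_s Z_p + \tfrac12 t_s^{1/2}[X_s,X_p], \ X_s + t_s^{1/2} X_p)$ with $s = n$ and $p = a_t.o = (t,0,0)$; since $X_p = 0$ and $Z_p = 0$ this simplifies to $n.(t,0,0) = (t + \tfrac14|X|^2,\ Z,\ X)$. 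Then apply $m = (\varphi,\psi) \in M$, which acts by $(\varphi,\psi).(t',Z',X') = (t',\varphi(Z'),\psi(X'))$, giving $mn a_t.o = (t + \tfrac14|X|^2, \varphi(Z), \psi(X))$. Finally apply the geodesic inversion $\sigma(t',Z',X') = \tfrac{1}{t'^2 + |Z'|^2}(t', -Z', (-t' + J_{Z'})X')$ to this point. The first $D$-coordinate of the result is
\[
 \frac{t + \tfrac14|X|^2}{\big(t + \tfrac14|X|^2\big)^2 + |\varphi(Z)|^2}.
\]

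To finish, I would extract $r = \alpha(a_r)$ from this point. Writing the resulting point as $(t', Z', X') \in D$, we have $r = t' - \tfrac14|X'|^2$. Here I expect the main (though still routine) obstacle: one must check that the $\tfrac14|X'|^2$ correction term cancels to leave exactly the claimed expression, i.e. that
\[
 r = \frac{t + \tfrac14|X|^2}{\big(t+\tfrac14|X|^2\big)^2 + |Z|^2} - \frac14\left|\frac{(-(t+\tfrac14|X|^2) + J_{\varphi(Z)})\psi(X)}{(t+\tfrac14|X|^2)^2 + |\varphi(Z)|^2}\right|^2
\]
equals $\dfrac{t}{(t+\tfrac14|X|^2)^2 + |Z|^2}$. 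Here one uses that $\varphi$ is orthogonal so $|\varphi(Z)| = |Z|$, that $\psi$ is orthogonal and $J_{\varphi(Z)}\psi = \psi J_Z$ so that $|(-(t+\tfrac14|X|^2) + J_{\varphi(Z)})\psi(X)|^2 = |(-(t+\tfrac14|X|^2) + J_Z)X|^2 = (t+\tfrac14|X|^2)^2|X|^2 + |J_Z X|^2$ (the cross term vanishes since $J_Z$ is skew-symmetric), and finally the identity $|J_Z X| = |Z||X|$ from \cite[Proposition~7.1]{CDKR2}. Substituting and simplifying the numerator $\big(t + \tfrac14|X|^2\big)\big((t+\tfrac14|X|^2)^2 + |Z|^2\big) - \tfrac14\big((t+\tfrac14|X|^2)^2 + |Z|^2\big)|X|^2$ over the common denominator $\big((t+\tfrac14|X|^2)^2 + |Z|^2\big)^2$ collapses to $\big(t + \tfrac14|X|^2 - \tfrac14|X|^2\big)\big/\big((t+\tfrac14|X|^2)^2 + |Z|^2\big) = t\big/\big((t+\tfrac14|X|^2)^2 + |Z|^2\big)$, which is the assertion.
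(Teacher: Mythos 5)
Your argument is correct and is essentially the paper's own proof: both apply $\sigma m n a_t$ to the base point $o=(1,0,0)$ in the $D$-model (using that $K$ fixes $o$), compare with $n'a_r\cdot o=(r+\tfrac14|X'|^2,Z',X')$, and then extract $r$ using the orthogonality of $(\varphi,\psi)$, the vanishing of the cross term $\langle \psi(X),J_{\varphi(Z)}\psi(X)\rangle$, and the identity $|J_ZX|=|Z||X|$. The only cosmetic difference is that you apply the factors to $o$ one at a time rather than writing down the image of the full product at once.
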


\begin{proof}
By Iwasawa decomposition we know that $\sigma m n a_t = n' a_r k$ for suitable $n'\in N$, $k\in K$ and $r\in\R_{>0}$.
Suppose that $m=(\varphi,\psi)$. Applying both $\sigma m n a_t$ and $n' a_r k$ to the base point $o=(1,0,0)_D$ in $D$, we find
\[
 \sigma m n a_t \cdot o = n' a_r k \cdot o = n' a_r \cdot o.
\]
In the coordinates of $D$ one easily calculates that 
\begin{align*}
 \sigma m n a_t &\cdot o  = 
\\
& = \frac{1}{\left( t + \frac14|X|^2\right)^2 + |Z|^2}
\left( t+ \tfrac14|X|^2, -\varphi(Z), \left(-t - \tfrac14|X|^2 + J_{\varphi(Z)}\right)\psi(X) \right)_D.
\end{align*}
Suppose that $n'=(1,Z',X')$. Then 
\[
 n' a_r \cdot o = \left( r + \tfrac14|X'|^2, Z', X'\right)_D.
\]
Thus
\begin{align*}
 X' &= \frac{1}{\left(t+\frac14|X|^2\right)^2 + |Z|^2}\left( -t -\tfrac14|X|^2 + J_{\varphi(Z)}\right) \psi(X),
\\
|X'|^2 & = \frac{1}{\big(\big( t+ \frac14|X|^2\big)^2 + |Z|^2 \big)^2} \Big( \left( t+ \tfrac14|X|^2\right)^2|X|^2 + 
\left|J_{\varphi(Z)}\psi(X) \right|^2 
\\
& \hphantom{\frac{1}{\left( \left( t+ \frac14|X|^2\right)^2 + |Z|^2 \right)^2} \Big(  t+\frac14 }- 2 \left( t+ \tfrac14|X|^2\right)\left\langle \psi(X), J_{\varphi(Z)}\psi(X)\right\rangle\Big)
\\
& =\frac{|X|^2}{\left( t+ \frac14|X|^2\right)^2 + |Z|^2},
\intertext{and}
 r & = \frac{t+\frac14|X|^2}{\left( t+ \frac14|X|^2\right)^2 + |Z|^2} - \tfrac14|X'|^2 = \frac{t}{\left(t+\frac14|X|^2\right)^2 + |Z|^2}.
\end{align*}
\end{proof}

\begin{lemma}\label{value}
Let $\xi\in \Xi$ and $g\in G$. If $g=\xi n a_s m \sigma$ with $n\in N$ and $m\in M$, then 
\[
 \left( \frac{\|v_\xi\varrho(ga_t)\|}{\|v_\xi\varrho(\xi)\|}\right)^{-\frac1q} = \frac{s}{t}.
\]
If $g= \xi n a_s m \sigma (1,Z,X)\sigma$ with $n\in N$ and $m\in M$, then
\[
 \left(\frac{\|v_\xi\varrho(ga_t)\|}{\|v_\xi\varrho(\xi)\|}\right)^{-\frac1q} = s\cdot\frac{ \frac1t }{ \left(\frac1t+\frac14|X|^2\right)^2 + |Z|^2 }.
\]
\end{lemma}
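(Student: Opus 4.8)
The statement is a direct computation using the $\varrho(K)$-invariance of the norm, the homomorphism property of $\theta_\xi$, and Lemma~\ref{sigmanormal}. Since $v_\xi\varrho(\xi^{-1}g)$ transforms by $\theta_\xi$ when $\xi^{-1}g$ lies in $\xi^{-1}(\xi NMA\xi^{-1})\xi = NMA$, the key is to reduce each expression to a scalar multiple of $v_\xi\varrho(\xi)$ times the relevant powers of $\alpha$, picking up a Jacobian factor from the ``bad'' $\sigma(1,Z,X)\sigma$ part in the second case.

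**First case.** Here $g = \xi n a_s m\sigma$, so $ga_t = \xi n a_s m\sigma a_t$. The plan is to rewrite $\sigma a_t$: since $\sigma\in K$ normalizes nothing helpful directly, instead observe $\sigma a_t = \sigma a_t \sigma^{-1}\cdot\sigma$, and $\sigma a_t\sigma^{-1}$ is again diagonalizable — in fact (geodesic inversion conjugates $a_t$ to $a_{1/t}$ up to an $M$-element, as one sees from the action on $D$), $\sigma a_t\sigma^{-1} = m' a_{1/t}$ for some $m'\in M$. Hence $ga_t = \xi n a_s m m' a_{1/t}\sigma$, which lies in the small $\xi$-Bruhat cell with $A$-component $a_s a_{1/t} = a_{s/t}$. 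Applying $\varrho$ and using that $\varrho(\sigma)$, $\varrho(m)$, $\varrho(m')$ preserve $\|v_\xi\varrho(\cdot)\|$-type quantities appropriately (more precisely: $v_\xi\varrho(\xi n a_s m m' a_{1/t}\sigma) = \theta_\xi(\xi n a_s a_{1/t}\xi^{-1})\,v_\xi\varrho(\xi\sigma)$ and $\|v_\xi\varrho(\xi\sigma)\| = \|v_\xi\varrho(\xi)\|$ since $\sigma\in K$), together with $\theta_\xi(\xi a_r\xi^{-1}) = \alpha(a_r)^{-q}$ and $\alpha(a_s a_{1/t}) = s/t$, gives the claimed value $s/t$. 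I would streamline this by invoking the computational content of Lemma~\ref{value}'s first assertion already implicit in the coordinate formulas of the previous section rather than re-deriving $\sigma a_t\sigma^{-1}$ from scratch.

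**Second case.** Now $g = \xi n a_s m\sigma(1,Z,X)\sigma$ and $ga_t = \xi n a_s m\sigma(1,Z,X)\sigma a_t$. The strategy is to pull the trailing $a_t$ through $\sigma$ and $(1,Z,X)$: write $\sigma(1,Z,X)\sigma a_t$, conjugate as before to get $\sigma(1,Z,X) a_{1/t}\sigma$ up to an $M$-factor, and then use $a_{1/t}^{-1}(1,Z,X)a_{1/t} = (1, t^{-1}Z, t^{-1/2}X)$ from the scaling action $a_s n = (s, sZ, s^{1/2}X)$, so that $(1,Z,X)a_{1/t} = a_{1/t}(1, tZ, t^{1/2}X)$ — careful with which direction: I want $ga_t$ expressed with the $A$-part collected near the front and an element of the big cell at the back, so that its $\xi$-height value is computed via Lemma~\ref{sigmanormal}. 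Concretely, $\sigma\,(1,Z,X)\,a_t$ rewritten in the form $\sigma\,n''\,a_{t}$ with $n'' = (1,Z,X)$ feeds directly into Lemma~\ref{sigmanormal} (with the roles of $t$ there being our $t$): that lemma tells us $\sigma m'' n'' a_t \in N a_r K$ with $r = t/((t+\tfrac14|X|^2)^2 + |Z|^2)$, but I need the reciprocal setup — applying the lemma to $\sigma$ acting after the inversion, i.e. to the group element whose relevant $t$-parameter is $1/t$, yields the factor $\tfrac1t\big/\big((\tfrac1t + \tfrac14|X|^2)^2 + |Z|^2\big)$. Multiplying by the $s$ coming from the $a_s$ at the front (exactly as in the first case) gives the stated formula.

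**Main obstacle.** The routine algebra is not the difficulty; the care needed is in bookkeeping the $M$-factors that appear when conjugating $a_t$ past $\sigma$ and past $N$-elements, and in making sure they are harmless — i.e. that $v_\xi\varrho(\xi m\xi^{-1}) = v_\xi$ for $m$ in the relevant compact group, equivalently $\theta_\xi\equiv 1$ on the connected component of $L_\xi = \xi NM\xi^{-1}$, which is exactly the observation recorded just after the definition of $\theta_\xi$. The other subtlety is correctly matching the parameter in Lemma~\ref{sigmanormal} to the reciprocal parameter that arises because geodesic inversion sends height $s$ to height $1/s$; getting this inversion bookkeeping right is where I would be most careful, and it is what produces the asymmetric-looking $\tfrac1t$ inside the denominator rather than a plain $t$.
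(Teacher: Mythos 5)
Your proposal is correct and follows essentially the same route as the paper: in both cases one commutes $a_t$ past $\sigma$ to $a_{1/t}$ (absorbing harmless $M$-factors), uses $\theta_\xi(\xi a\xi^{-1})=\alpha(a)^{-q}$ together with $\theta_\xi\equiv 1$ on $\xi NM\xi^{-1}$ and the $\varrho(K)$-invariance of the norm, and in the second case feeds the parameter $1/t$ into Lemma~\ref{sigmanormal} to produce the factor $\frac{1/t}{(\frac1t+\frac14|X|^2)^2+|Z|^2}$, multiplied by the $s$ from the leading $a_s$. Your inversion bookkeeping (including the reciprocal parameter) matches the paper's computation exactly.
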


Recall the identification of the cusp represented by $\xi\in\Xi$ with the cusp neighborhood $(\Gamma\cap \xi NM\xi^{-1})\backslash \xi \Omega(s_1,\eta)$ from Section~\ref{sec_funddom}. Let us note that the first case corresponds to a trajectory pointing straight out of the cusp represented by $\xi$.  In the second case, the element $u=\sigma (1,Z,X)\sigma$ determines the perturbation to the trajectory pointing straight into the cusp. If $(Z,X) = (0,0)$, the second case correspond to a trajectory pointing straight into the cusp, and the formula simplifies to 
\[
  \left(\frac{\|v_\xi\varrho(ga_t)\|}{\|v_\xi\varrho(\xi)\|}\right)^{-\frac1q} = st.
\]

\begin{proof}[Proof of Lemma~\ref{value}]
At first we suppose that $g= \xi n  a_s m \sigma$. Then 
\[
 ga_t = \xi n a _{s/t} m \sigma = \xi na_{s/t}\xi^{-1} \xi m\sigma \quad\in \big(\xi NA\xi^{-1}\big) \big(\xi K\big).
\]
Hence
\begin{align*}
\| v_\xi \varrho(g a_t)\|  & = \theta_\xi(\xi a_{s/t} \xi^{-1}) \| v_\xi \varrho(\xi) \| = \left(\frac{s}{t}\right)^{-q} \| v_\xi \varrho(\xi) \|.
\end{align*}
Suppose now that $g=\xi n m a_s u$ with $u=\sigma n' \sigma$ and $n'=(1,Z,X)$. Then (for some $m'\in M$)
\begin{align*}
\| v_\xi \varrho(ga_t) \| & = s^{-q} \| v_\xi \varrho(\xi \sigma m' n'\sigma a_t) \| = s^{-q} \| v_\xi \varrho(\xi \sigma m' n' a_{1/t}\sigma)\|
\\
&  = s^{-q} \| v_\xi \varrho(\xi \sigma m' n' a_{1/t}) \|.
\end{align*}
Lemma~\ref{sigmanormal} yields 
\[
 \sigma m' n' a_{1/t} = n'' a_r k
\]
for some $n''\in N$, $k\in K$ and 
\[
 r= \frac{\frac1t}{\left( \frac1t + \frac14|X|^2\right)^2 + |Z|^2}.
\]
Thus,
\[
\| v_\xi \varrho(ga_t) \| = \left(\frac{\frac1t}{\left( \frac1t + \frac14|X|^2\right)^2 + |Z|^2}\right)^{-q} s^{-q} \| v_\xi \varrho(\xi) \|.
\]
\end{proof}

The following proposition describes the amount of time a trajectory spends in a neighborhood of the cusp represented by $\xi$. 

\begin{prop}\label{sojourn}
Let $\xi\in\Xi$ and $g\in G$. Write $\delta\sceq\| v_\xi \varrho(g)\|$. If $g\in \xi NAM\sigma$, then $v_\xi\varrho(ga_t) \in B_{\delta}$ if and only if $t<1$. If $g=\xi n  a_s m u \in \xi NAMU$ with $u = \sigma (1, Z, X) \sigma$, then $v_\xi \varrho(ga_t) \in B_{\delta}$ if and only if
\[
 t\in \left( \frac{1}{\frac1{16}|X|^4 + |Z|^2 }, 1\right) \cup \left(1, \frac{1}{\frac1{16}|X|^4 + |Z|^2} \right).
\]
If $u=\id$, then $\big(\tfrac1{16}|X|^4 + |Z|^2\big)^{-1}$ is to be understood as $\infty$.
\end{prop}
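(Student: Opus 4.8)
The statement reduces to an elementary inequality once the explicit formulas of Lemma~\ref{value} are inserted, so the plan is to set up the right monotone reformulation and then solve that inequality. First I would note that $q>0$, so $x\mapsto x^{-1/q}$ is strictly decreasing on $\R_{>0}$, and that $a_1=\id$ (since $\alpha(a_1)=1=\alpha(\id)$ and $\alpha$ is injective). Hence $\delta=\|v_\xi\varrho(g)\|=\|v_\xi\varrho(ga_1)\|$, and writing
\[
 F(t)\sceq\left(\frac{\|v_\xi\varrho(ga_t)\|}{\|v_\xi\varrho(\xi)\|}\right)^{-\frac1q},
\]
the condition $v_\xi\varrho(ga_t)\in B_\delta$, i.e.\ $\|v_\xi\varrho(ga_t)\|<\|v_\xi\varrho(ga_1)\|$, is equivalent to $F(t)>F(1)$.

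In the small-Bruhat-cell case write $g=\xi na_sm\sigma$; Lemma~\ref{value} gives $F(t)=s/t$, so $F(t)>F(1)=s$ holds precisely when $t<1$, which is the assertion. In the big-Bruhat-cell case write $g=\xi na_smu$ with $u=\sigma(1,Z,X)\sigma$; Lemma~\ref{value} gives
\[
 F(t)=s\cdot\frac{\tfrac1t}{\left(\tfrac1t+\tfrac14|X|^2\right)^2+|Z|^2}.
\]
Setting $w=1/t>0$ and abbreviating $c=\tfrac14|X|^2$, $d=|Z|^2$ and $e=c^2+d=\tfrac1{16}|X|^4+|Z|^2$ (all non-negative), the inequality $F(t)>F(1)$ becomes
\[
 \frac{w}{(w+c)^2+d}>\frac{1}{(1+c)^2+d}.
\]
Both denominators are positive, so clearing them, expanding, and using the factorisation $w^2-(1+e)w+e=(w-1)(w-e)$ turns this into $(w-1)(w-e)<0$; this is the only genuine computation, and it is a one-liner.

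It then remains to read off which $t$ satisfy $(1/t-1)(1/t-e)<0$, i.e.\ for which $t$ the number $w=1/t$ lies strictly between $1$ and $e$. If $e>1$ this gives $t\in(1/e,1)$; if $0\le e<1$ it gives $t\in(1,1/e)$; if $e=1$ it is never satisfied. In all cases this equals $t\in(1/e,1)\cup(1,1/e)$ with one of the two intervals empty, which is the claimed set. When $u=\id$ one has $X=0$ and $Z=0$ (because $\sigma$ is an involution), hence $e=0$, and the convention $1/e=\infty$ then yields $t\in(1,\infty)$, as stated. I foresee no real obstacle: the geometric content is entirely in Lemmas~\ref{sigmanormal} and~\ref{value}, and what remains is the monotonicity reformulation together with the above elementary inequality.
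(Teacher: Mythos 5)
Your proposal is correct and follows essentially the same route as the paper: insert the formulas of Lemma~\ref{value}, reduce the norm comparison (with $a_1=\id$) to the inequality in $w=1/t$, and factor it as $(w-1)\bigl(w-\tfrac1{16}|X|^4-|Z|^2\bigr)<0$, which is exactly the paper's condition $\bigl(1-\tfrac1t\bigr)\bigl(-\tfrac1t+\tfrac1{16}|X|^4+|Z|^2\bigr)<0$. The only (harmless) difference is your explicit monotone reformulation via $F(t)>F(1)$ and the substitution, which the paper does implicitly.
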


\begin{proof}
The first part of the statement follows immediately from Lemma~\ref{value}. Suppose now that $g=\xi n m a_s u$ with $u=\sigma n' \sigma$ and $n'=(1,Z,X)$. By Lemma~\ref{value},
\begin{equation}\label{rform}
\| v_\xi \varrho(ga_r) \| = \left(\frac{\frac1r}{\left( \frac1r + \frac14|X|^2\right)^2 + |Z|^2}\right)^{-q} s^{-q} \| v_\xi \varrho(\xi) \|.
\end{equation}
Applying \eqref{rform} for $r=1$ and $r=t$, we see that
\[
 \| v_\xi \varrho(ga_t) \| < \| v_\xi \varrho(g) \|
\]
if and only if
\[
 \frac{1}{\left(1+\frac14|X|^2\right)^2 + |Z|^2} < \frac{\frac1t}{\left(\frac1t + \frac14|X|^2\right)^2 + |Z|^2},
\]
which is equivalent to
\[
 \left( 1 - \frac1t\right) \left( -\frac1t + \frac1{16}|X|^4 + |Z|^2 \right) < 0.
\]
This is the case if and only if
\[
 |Z|^2 + \frac{1}{16}|X|^4 <\frac1t < 1 \quad\text{or}\quad |Z|^2 + \frac{1}{16}|X|^4 > \frac1t > 1.
\]
\end{proof}

Suppose that $\|v_\xi\varrho(ga_t)\|=\|v_\xi\varrho(\gamma g a_t)\|$ for some $g\in G$, $\gamma\in\Gamma$ and all $t$ in a non-trivial interval (ie., an interval which contains at least two points). Then Lemma~\ref{value} yields that $g$ and $\gamma g$ have the same $A$-component in $\xi NAK$ and they are in the same $\xi$-Bruhat cell. If moreover, $g$ and $\gamma g$ are in the big $\xi$-Bruhat cell, then also the norms of their $U$-components are equal. The following lemma shows that far out in the cusp much more is true.

\begin{lemma}\label{stable}
Let $\xi\in\Xi$ and suppose that $g\in G$ and $\gamma \in \Gamma$ are such that 
\[
 \| v_\xi \varrho(g) \| = \| v_\xi \varrho(\gamma g) \| < \delta_\xi(s_1).
\]
Then $\gamma \in \xi NM \xi^{-1}$. In particular, if $g= \xi n  a m \sigma$ resp.\@ $g= \xi n  a m u$ with $n\in N$, $a\in A$, $m\in M$  and $u\in U$, then $\gamma g = \xi n' a m' \sigma$ resp.\@ $\gamma g = \xi n' a m'  u$ for some $n'\in N$, $m' \in M$.
\end{lemma}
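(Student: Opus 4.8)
The plan is to deduce from the norm equality that $\gamma$ normalises $\mf l_\xi$, to identify that normaliser as $\xi NAM\xi^{-1}$, and then to see that the $A$-component of $\gamma$ must be trivial. First, since $\|v_\xi\varrho(g)\|=\|v_\xi\varrho(\gamma g)\|<\delta_\xi(s_1)$, both $v_\xi\varrho(g)$ and $v_\xi\varrho(\gamma g)$ lie in $B_{\delta_\xi(s_1)}$, so the second assertion of Proposition~\ref{samenorm}, applied with $s=s_1$, $\gamma_1=\id$ and $\gamma_2=\gamma$, gives $v_\xi\varrho(g)=\pm v_\xi\varrho(\gamma g)$. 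Since $\varrho$ is a right action, $v_\xi\varrho(\gamma g)=\bigl(v_\xi\varrho(\gamma)\bigr)\varrho(g)$, and $\varrho(g)\in\GL(V)$; cancelling $\varrho(g)$ yields $v_\xi\varrho(\gamma)=\pm v_\xi$. Hence $\bigwedge^{\ell}\Ad_{\gamma^{-1}}$ maps the line $W_\xi=\bigwedge^{\ell}\mf l_\xi$ onto itself, which (injectivity of the Pl\"ucker embedding of the Grassmannian) is equivalent to $\Ad_{\gamma^{-1}}\mf l_\xi=\mf l_\xi$, i.e.\ $\gamma\in N_G(\mf l_\xi)$.

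Next I would show $N_G(\mf l_\xi)=\xi NAM\xi^{-1}$. Writing $\mf l_\xi=\Ad_\xi(\mf n\oplus\mf m)$ (the Lie algebra of $\xi NM\xi^{-1}$), each of $N$, $A$, $M$ normalises $\mf n\oplus\mf m$ thanks to $[\mf a,\mf n]\subseteq\mf n$, $[\mf a,\mf m]=0$ and $[\mf m,\mf n]\subseteq\mf n$, which gives $\xi NAM\xi^{-1}\subseteq N_G(\mf l_\xi)$. For the reverse inclusion, a short computation with the grading \eqref{rootspacedecomp} shows that the normaliser of $\mf n\oplus\mf m$ in $\mf g$ is the minimal parabolic subalgebra $\mf p\sceq\mf n\oplus\mf a\oplus\mf m$: it clearly contains $\mf p$, and no nonzero $X\in\mf g_{-1}\oplus\mf g_{-2}$ can normalise $\mf n\oplus\mf m$, because for $0\neq X\in\mf g_{-j}$ the bracket $[X,\theta X]\in\mf g_0=\mf m\oplus\mf a$ pairs nontrivially with $\mf a$ under the Killing form while $\mf m$ is Killing-orthogonal to $\mf a$, so $[X,\theta X]$ has nonzero $\mf a$-component and $[X,\mf g_j]\not\subseteq\mf m\oplus\mf n$. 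Since any element of $G$ normalising $\mf n\oplus\mf m$ also normalises its normaliser $\mf p$, it lies in $N_G(\mf p)=NAM$ (the minimal parabolic subgroup being the full normaliser of the minimal parabolic subalgebra); conjugating by $\xi$ gives $N_G(\mf l_\xi)=\xi NAM\xi^{-1}$.

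Finally, write $\gamma=\xi nam\xi^{-1}$ with $n\in N$, $a\in A$, $m\in M$. As $\xi NAM\xi^{-1}$ is the domain of $\theta_\xi$, and $\theta_\xi$ is a homomorphism into $\R_{>0}$ that equals $1$ on the identity component of $\xi NM\xi^{-1}$ (in particular on $\xi N\xi^{-1}$), equals $1$ on the compact group $\xi M\xi^{-1}$, and satisfies $\theta_\xi(\xi a\xi^{-1})=\alpha(a)^{-q}$, we obtain $v_\xi\varrho(\gamma)=\theta_\xi(\gamma)v_\xi=\alpha(a)^{-q}v_\xi$. Comparing with $v_\xi\varrho(\gamma)=\pm v_\xi$ and using $\alpha(a)^{-q}>0$ forces $\alpha(a)=1$, hence $a=\id$ (as $\alpha$ is an isomorphism $A\to\R_{>0}$) and $\gamma\in\xi NM\xi^{-1}$. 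The ``in particular'' assertion then follows by a direct computation: for $\gamma=\xi n_1 m_1\xi^{-1}$ and $g=\xi nam\sigma$ (resp.\ $g=\xi namu$), using $m_1 n=(m_1 n m_1^{-1})m_1$ and $m_1 a=a m_1$ one gets $\gamma g=\xi\bigl(n_1\,m_1 n m_1^{-1}\bigr)\,a\,(m_1 m)\,\sigma$ (resp.\ the same with $u$ in place of $\sigma$), which is of the claimed form.

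I expect the identification $N_G(\mf l_\xi)=\xi NAM\xi^{-1}$ to be the main obstacle: the delicate point is to control the (a priori possibly present) extra connected components of the normaliser, for which one must invoke the structure theory of parabolic subgroups rather than a bare Lie-algebra computation. The other essential — and easily overlooked — ingredient is that $v_\xi\varrho(\gamma)$ equals $\pm v_\xi$ with coefficient of modulus exactly $1$, not merely some scalar multiple of $v_\xi$; this is precisely what forces the $A$-component to vanish through $\theta_\xi$.
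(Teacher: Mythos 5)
Your argument is correct, but it runs along a genuinely different track than the paper's. The paper never leaves the reduction-theoretic framework: from $\|v_\xi\varrho(g)\|,\|v_\xi\varrho(\gamma g)\|<\delta_\xi(s_1)$ it concludes via Dani's Lemma~2.2 that $g,\gamma g\in L_\xi\xi A_{s_1}K$, rewrites this set as $(\Gamma\cap L_\xi)\,\xi\eta A_{s_1}K$ using cocompactness of $\Gamma\cap L_\xi$ in $L_\xi$ (Dani's Remark~1.3), and then invokes property (iv) of Proposition~\ref{funddomGR} (Garland--Raghunathan) for the two overlapping translates of $\xi\Omega(s_1,\eta)$ to force $\gamma\in\xi NM\xi^{-1}$ directly; this is exactly why the threshold $\delta_\xi(s_1)$ appears in the hypothesis. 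You instead stay inside the representation $V$: the $\pm$-statement of Proposition~\ref{samenorm} gives $v_\xi\varrho(\gamma)=\pm v_\xi$, the Pl\"ucker argument turns this into $\Ad_\gamma\mf l_\xi=\mf l_\xi$, the normaliser is identified with $\xi NAM\xi^{-1}$, and the unit-modulus coefficient kills the $A$-part through $\theta_\xi$. What the paper's route buys is that it needs no structure theory beyond the quoted fundamental-domain results; what your route buys is independence from Proposition~\ref{funddomGR}(iv), at the price of importing the nontrivial group-level fact $N_G(\mf p)=NAM$ (needed precisely to control all connected components, as you note) and of a Lie-algebra normaliser computation. Note also that, like the paper, you never actually use the \emph{equality} of the two norms, only that both lie below $\delta_\xi(s_1)$.

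Two small points to nail down if you write this up. First, your verification that the normaliser of $\mf m\oplus\mf n$ in $\mf g$ equals $\mf m\oplus\mf a\oplus\mf n$ only treats homogeneous $X\in\mf g_{-1}\cup\mf g_{-2}$; to rule out non-homogeneous elements, observe that the normaliser is $\ad_{\mf a}$-invariant (since $\mf m\oplus\mf n$ is), hence graded with respect to \eqref{rootspacedecomp}, so the homogeneous check suffices. Second, the identity $N_G(\mf p)=NAM$ should be quoted for $G$ connected semisimple with finite center possibly containing compact factors (those factors sit inside $M$, so the statement is unaffected); with that reference in place the proof is complete.
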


\begin{proof}
By \cite[Lemma~2.2]{Dani} (see also Proposition~\ref{samenorm}), for each $s>0$ we have
\[
 L_\xi \xi A_s K =  \left\{ g\in G \left\vert\  v_\xi \varrho(g) \in B_{\delta_\xi(s)}\right.\right\}.
\]
Hence $g, \gamma g \in L_\xi \xi A_{s_1} K$. By \cite[Remark~1.3]{Dani} (with $\eta$ as in Proposition~\ref{funddomGR}), 
\[
 L_\xi \xi A_{s_1} K = ( \Gamma \cap L_\xi) \xi \eta A_{s_1} K.
\]
Hence there exist $\gamma_1,\gamma_2\in\Gamma\cap L_\xi$, $h_1,h_2 \in \eta A_{s_1} K$ such that
\[
 g=\gamma_1 \xi h_1, \quad \gamma g = \gamma_2 \xi h_2.
\]
Therefore
\[
 g \in \gamma_1 \xi \Omega(s_1,\eta) \cap \gamma^{-1}\gamma_2 \xi \Omega(s_1,\eta).
\]
Proposition~\ref{funddomGR}\eqref{funddomGRiv} yields $\gamma_1^{-1}\gamma^{-1}\gamma_2 \in \xi NM \xi^{-1}$. Thus, $\gamma\in \xi NM \xi^{-1}$.
\end{proof}

For the proof of the following proposition we recall that the supremum in the definition of $\xi$-height \eqref{xiheight} is realized if $\height_\xi(x) \geq s_1$.

\begin{prop}\label{ht_super}
Let $s > s_1$ and $x\in\homsp$. Suppose that there exists an interval $I$ in $\R$ such that $\height(xa_t) > s$ for all $t\in I$. Then there exists a unique cusp representative $\xi\in \Xi$ and a (non-unique) element $g\in G$ with $x=\Gamma g$ such that 
\[
 \height(xa_t) = \height_\xi(xa_t) = \left( \frac{\|v_\xi\varrho(ga_t)\|}{\|v_\xi\varrho(\xi)\|} \right)^{-\frac1q}
\]
for all $t\in I$. Moreover, if $1\in I$ and if there exists $t\in I$ with $t>1$ and $\height(xa_t) > \height(x)$, then $g=\xi n a_r m u$ for some $r>0$, $n\in N$, $m\in M$ and $u\in U$. The elements $a_r$ and $u$ do not depend on the choice of $g$. Finally, if $u=\sigma (1, Z, X) \sigma$, then 
\[
 |X| < 2 t^{-1/4} \quad\text{and}\quad |Z| < t^{-1/2}.
\]
\end{prop}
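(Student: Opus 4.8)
The argument combines the explicit height formulas of Lemma~\ref{value} and Proposition~\ref{sojourn} with the norm rigidity provided by Proposition~\ref{samenorm} and Lemma~\ref{stable}. Throughout, for $\xi\in\Xi$ and $g\in G$ write $f_g(t)\sceq\left(\|v_\xi\varrho(ga_t)\|/\|v_\xi\varrho(\xi)\|\right)^{-1/q}$, so that $\height_\xi(xa_t)=\sup_{\gamma\in\Gamma}f_{\gamma g}(t)$ and, when this value exceeds $s_1$, it is attained. The key mechanism is: if two representatives $g$, $\gamma g$ of a point satisfy $\|v_\xi\varrho(g)\|=\|v_\xi\varrho(\gamma g)\|<\delta_\xi(s_1)$, then $\gamma\in\xi NM\xi^{-1}$ by Lemma~\ref{stable}, hence $g$ and $\gamma g$ lie in the same $\xi$-Bruhat cell with the same $A$- and $U$-components, and therefore $f_g\equiv f_{\gamma g}$ on all of $\R$ by Lemma~\ref{value}.

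\textbf{The cusp and Part 1.} For $s>s_1$ the sets $\homsp(\xi,s)$, $\xi\in\Xi$, are pairwise disjoint and open ($\height_\xi$ is a supremum, over the $\Gamma$-orbit, of continuous functions, hence lower semicontinuous). Since $t\mapsto xa_t$ is continuous and $\height(xa_t)>s$ forces $xa_t\in\bigcup_\xi\homsp(\xi,s)$, the interval $I$ is partitioned into the relatively open sets $\{t\in I: xa_t\in\homsp(\xi,s)\}$; connectedness of $I$ singles out one $\xi$, and for $t\in I$ every $\height_{\xi'}(xa_t)$ with $\xi'\neq\xi$ is $\le s<\height_\xi(xa_t)$, so $\height(xa_t)=\height_\xi(xa_t)$. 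The heart of the matter is the \emph{local statement}: for each $t_0\in I$ there are $g$ with $x=\Gamma g$ and $\eps>0$ such that $f_g(t)=\height_\xi(xa_t)$ for $t\in(t_0-\eps,t_0+\eps)\cap I$. To prove it, pick $g$ realizing the attained supremum at $t_0$, so $\|v_\xi\varrho(ga_{t_0})\|=\delta_\xi(\height_\xi(xa_{t_0}))<\delta_\xi(s)<\delta_\xi(s_1)$. If $g$ failed, there would be $t_k\to t_0$ in $I$ and $\gamma_k\in\Gamma$ with $f_{\gamma_k g}(t_k)=\height_\xi(xa_{t_k})>f_g(t_k)$, so $\|v_\xi\varrho(\gamma_kga_{t_k})\|<\delta_\xi(s)$; since $a_{t_0/t_k}\to\id$ and these vectors are bounded, $v_\xi\varrho(\gamma_kga_{t_0})=v_\xi\varrho(\gamma_kga_{t_k})\varrho(a_{t_0/t_k})$ lies in $B_{\delta_\xi(s_1)}$ for large $k$, as does $v_\xi\varrho(ga_{t_0})$; Proposition~\ref{samenorm} then gives $\|v_\xi\varrho(\gamma_kga_{t_0})\|=\|v_\xi\varrho(ga_{t_0})\|$, Lemma~\ref{stable} gives $\gamma_k\in\xi NM\xi^{-1}$, and the mechanism above gives $f_{\gamma_kg}\equiv f_g$, contradicting $f_{\gamma_kg}(t_k)>f_g(t_k)$. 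Granting the local statement, $\height_\xi(xa_\cdot)$ is continuous on $I$; fixing $t_0$ with its $g$, the set $\{t\in I: f_g(t)=\height_\xi(xa_t)\}$ is non-empty, closed (zero set of a continuous function), and open (at any of its points the locally valid representative differs from $g$ by an element of $\xi NM\xi^{-1}$ by Lemma~\ref{stable}, so it agrees with $f_g$ nearby), hence equals $I$ by connectedness. This proves Part 1; non-uniqueness of $g$ needs nothing.

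\textbf{Parts 2 and 3.} If $g$ lay in the small Bruhat cell, $f_g(t)=r/t$ by Lemma~\ref{value} is strictly decreasing, contradicting the existence of $t\in I$ with $t>1$ and $\height(xa_t)>\height(x)=\height(xa_1)=f_g(1)$; thus $g=\xi na_rmu$ with $u\in U$, say $u=\sigma(1,Z,X)\sigma$. For any other valid representative $g'=\gamma g$, since $1\in I$ we have $\|v_\xi\varrho(g)\|=\|v_\xi\varrho(\gamma g)\|<\delta_\xi(s_1)$, so Lemma~\ref{stable} applied to $g$ itself gives $\gamma\in\xi NM\xi^{-1}$ and $g'=\xi n'a_rm'u$ with the same $a_r$ and $u$. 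Finally $\height(xa_t)>\height(x)$ reads $\|v_\xi\varrho(ga_t)\|<\|v_\xi\varrho(g)\|$, so Proposition~\ref{sojourn} (with $\delta=\|v_\xi\varrho(g)\|$) together with $t>1$ forces $t<\big(\tfrac1{16}|X|^4+|Z|^2\big)^{-1}$, i.e.\ $\tfrac1{16}|X|^4<1/t$ and $|Z|^2<1/t$, which rearrange to $|X|<2t^{-1/4}$ and $|Z|<t^{-1/2}$ (and the case $u=\id$ is trivial).

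\textbf{Main obstacle.} The only step that is not a direct application of the quoted results is the local statement in Part 1 --- ruling out that the height-realizing $\Gamma$-translate jumps as $t$ moves. The limit argument above reduces exactly this to the norm rigidity of Proposition~\ref{samenorm} and Lemma~\ref{stable}.
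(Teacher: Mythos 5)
Your proposal is correct and follows essentially the same approach as the paper: isolate the unique cusp $\xi$ via the disjoint open sets $J_\xi$ and connectedness of $I$, then use Proposition~\ref{samenorm} and Lemma~\ref{stable} to show a single representative $g$ realizes the height on all of $I$ (your ``local statement'' plus nonempty--open--closed argument is just a more verbose rephrasing of the paper's observation that the sets $J_t=\{p\in I: g_t$ realizes the height at $p\}$ are open, cover $I$, and coincide when they overlap), and finally deduce the Bruhat cell, the uniqueness of $a_r,u$, and the $|X|,|Z|$ bounds from Lemma~\ref{value}, Lemma~\ref{stable}, and Proposition~\ref{sojourn} exactly as the paper does.
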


\begin{proof}
If $y\in \homsp$ and $\xi\in\Xi$ such that $\height_\xi(y) > s_1$, then there exists $h\in G$ such that $y=\Gamma h$ and 
\[
 \height_\xi(y) = \left( \frac{\|v_\xi\varrho(h)\|}{\|v_\xi\varrho(\xi)\|}\right)^{-\frac1q}.
\]
Since the function 
\[
\left\{
\begin{array}{ccl}
\R_{>0} & \to & \R
\\
r & \mapsto & \|v_\xi\varrho(ga_r)\| 
\end{array}
\right.
\]
is continuous, there exists an open neighborhood $J$ of $1$ in $\R_{>0}$ such that $\height_\xi(ya_r)>s_1$ for all $r\in J$. For $\xi\in \Xi$ let 
\[
J_\xi \sceq \big\{ t\in I \ \big\vert\ \height_\xi(xa_t) > s \big\}.
\]
These sets are pairwise disjoint, open in $I$ and cover $I$. Since $I$ is connected, there exists a unique $\xi \in \Xi$ with $I = J_\xi$. Thus
\[
 \height(xa_t) = \height_\xi(xa_t)
\]
for all $t\in I$. For each $t\in I$ pick an element $g_t\in G$ such that $x=\Gamma g_t$ and 
\[
 \height_\xi(xa_t) = \left( \frac{\|v_\xi\varrho(g_ta_t)\|}{\|v_\xi\varrho(\xi)\|}\right)^{-\frac1q}.
\]
Let $J_t$ be the set of $p\in I$ such that 
\[
\height_\xi(xa_{p}) = \left( \frac{\|v_\xi\varrho(g_ta_{p})\|}{\|v_\xi\varrho(\xi)\|}\right)^{-\frac1q}.
\]
Then $I$ is covered by the sets $J_t$, and these are open in $I$ by Proposition~\ref{samenorm}. If $J_t$ and $J_r$ overlap for some $t,r\in I$, $t\not=r$, then Lemma~\ref{stable} and \ref{value} imply that $J_t=J_r$. In turn, $J_t=I$ for each $t\in I$.

The remaining statements follow immediately from Proposition~\ref{sojourn} and Lemma~\ref{stable}.
\end{proof}

\section{Common cusp excursions of nearby points}

For $s>0$ we define
\begin{align*}
\homsp_{\leq s} & \sceq \homsp\setminus \homsp_{>s}.
\end{align*}
Further we let 
\[
 r_0 \sceq \alpha(\tilde a)
\]
and recall that~$r_0>1$ by our choice of~$\alpha$.

Each connected component of $\homsp$ of height above $s_1$ can essentially be identified with a Siegel set (cf.\@ Proposition~\ref{funddomGR}). For the proof of the main theorem, trajectories of points $x\in\homsp$ are only considered time-discretized by the map $T$. In the following lemma we construct a height level $s_2$ above which we can identify pieces of these discretized trajectories with trajectory segments in a Siegel set. More specifically, as soon as we know that two consecutive points of the discretized trajectory stay above height $s\geq s_2$, then the (continuous) trajectory segment of the corresponding geodesic also stays above height $s$ and, in particular, does not visit the compact set $\homsp_{\leq s_1}$. Then we construct a second height level $s_3>s_2$ such that any discretized trajectory entering $\homsp_{>s_3}$ can locally be identified with a continuous trajectory segment in the Siegel set. In Section~\ref{notfull} below this will be crucial to effectively determine the behavior of 
nearby starting trajectories. Of special importance for Section~\ref{sec_main} below is the item \eqref{descent} of the following lemma, which states that if we start to descend somewhere high in a cusp, then we actually descend up to below height $s_3$.

\begin{lemma}\label{s2s3}
There exist $s_3>s_2>s_1$ such that we have the following properties: 
\begin{enumerate}[{\rm (i)}]
\item\label{lowbound} If $x\in\homsp_{> s_2}$, then $\height(xa_t) > 2s_1$ for all $t\in [r_0^{-1},r_0]$.
\item\label{up} If $s\geq s_2$ and $x, Tx\in\homsp_{>s}$, then $\height(xa_t)>s$ for all $t\in [1,r_0]$.
\item\label{below} Let $s>s_3$. If $x\in\homsp_{\leq s_3}$ and $T^jx \in\homsp_{>s}$ for some $j\in \N$, then there exists $n\in\{0,\ldots, j-1\}$ such that $\height(T^nx) \leq s_3$ and $\height(xa_t) > s_2$ for all $t\in [r_0^n,r_0^j]$.
\item\label{above} Let $s>s_3$. If $x\in\homsp_{>s}$ and $T^jx\in\homsp_{\leq s_3}$ for some $j\in\N$, then there exists $n\in\{1,\ldots, j\}$ such that $\height(T^nx)\leq s_3$ and $\height(xa_t)> s_2$ for all $t\in [1,r_0^n]$.
\item\label{descent} Let $s>s_3$. If $x\in\homsp_{>s}$ and $Tx\in\homsp_{\leq s}$, then there exists $n\in\N$ such that $T^nx\in \homsp_{\leq s_3}$ and $T^kx\in\homsp_{\leq s}$ for all $k=1,\ldots, n$.
\end{enumerate}
\end{lemma}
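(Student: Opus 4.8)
My plan is to establish the five properties in roughly the order they are stated, since the later ones rely on the geometric picture set up by the earlier ones. The unifying tool is Proposition~\ref{ht_super}: once a discretized trajectory has two consecutive points above $s_1$, the continuous segment between them is controlled by a single cusp $\xi$ and a single element $g = \xi n a_r m u$, and the height along the segment is the explicit function of $t$ from Lemma~\ref{value}, namely (in the big-cell case) $s \cdot \tfrac1t \big((\tfrac1t + \tfrac14|X|^2)^2 + |Z|^2\big)^{-1}$ with $u = \sigma(1,Z,X)\sigma$. The key qualitative fact extracted from Proposition~\ref{sojourn} is that along such a segment the height is a \emph{unimodal} function of $t$: it rises to a single maximum and then falls monotonically (with the maximum possibly at an endpoint, and in the small-cell case strictly decreasing throughout). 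This unimodality is what makes the ``once you start to descend, you keep descending'' phenomenon work.

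For \eqref{lowbound}: pick $s_2 > s_1$ large enough that for $x \in \homsp_{>s_2}$, the value of the height function cannot drop by more than the bounded factor that $a_t$ for $t \in [r_0^{-1}, r_0]$ can produce; concretely, using Lemma~\ref{value} with $t \in [r_0^{-1},r_0]$ and the bounds $|X| < 2t^{-1/4}$, $|Z| < t^{-1/2}$ from Proposition~\ref{ht_super}, the height at time $t$ is at least a constant (depending only on $r_0$) times $\height(x)$, so choosing $s_2$ so that this constant times $s_2$ exceeds $2s_1$ does the job. For \eqref{up}: here $x, Tx = xa_{r_0} \in \homsp_{>s}$ with $s \geq s_2$; by \eqref{lowbound} applied at $x$ we already know the whole segment $[1,r_0]$ stays above $2s_1 > s_1$, so Proposition~\ref{ht_super} applies and unimodality of the height on $[1,r_0]$ together with both endpoints being $> s$ forces $\height(xa_t) > s$ on all of $[1,r_0]$.

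For \eqref{below} and \eqref{above}: choose $s_3 > s_2$ so that ascent/descent from height $> s_3$ cannot happen in a single $a_{r_0}$-step past height $s_2$ — i.e.\ one $r_0$-step changes the height by at most a bounded factor, and we want $s_3 / (\text{that factor}) > s_2$. Then in \eqref{below}, take $n$ to be the largest index in $\{0,\dots,j-1\}$ with $\height(T^n x) \leq s_3$ (it exists since $\height(x) \leq s_3$); by maximality $T^{n+1}x, \dots, T^jx$ are all above $s_3 > s_2$, so by \eqref{lowbound}/\eqref{up} the continuous segment over $[r_0^{n+1}, r_0^j]$ stays above $s_2$, and the single extra step from $r_0^n$ to $r_0^{n+1}$ stays above $s_2$ by the choice of $s_3$; hence $\height(xa_t) > s_2$ on $[r_0^n, r_0^j]$. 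Statement \eqref{above} is the time-reversed analogue: take $n$ minimal in $\{1,\dots,j\}$ with $\height(T^nx) \leq s_3$, argue the same way on $[1, r_0^n]$. Finally \eqref{descent}: $x \in \homsp_{>s}$, $Tx \in \homsp_{\leq s}$ with $s > s_3$. The segment over $[1, r_0]$ lies in one cusp; since the height has dropped from $> s$ at $t=1$ to $\leq s$ at $t = r_0$, by unimodality the height is non-increasing on $[1, r_0]$, and in fact the element is in the big cell with $t=1$ \emph{past} the maximum (or at it). I then continue stepping: as long as $T^k x \in \homsp_{>s_1}$ the whole picture persists in the same cusp with the same $g$ (Proposition~\ref{ht_super} / Lemma~\ref{stable}), the height follows the globally decreasing tail of the unimodal function, so $\height(T^k x)$ is decreasing in $k$; since $\tfrac1t ((\tfrac1t+\tfrac14|X|^2)^2+|Z|^2)^{-1} \to 0$ as $t \to \infty$, the height eventually drops below $s_3$, giving the desired $n$, and $T^k x \in \homsp_{\leq s}$ for all $k=1,\dots,n$ because the sequence is decreasing from below $s$.

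The main obstacle I anticipate is \eqref{descent}: one must be careful that the continuous-to-discrete identification does not break when the trajectory, while descending, dips below the thresholds $s_2$ or $s_1$ — i.e.\ one needs to confirm that the continuous segment never re-enters the compact core before the discretized point has provably fallen below $s_3$, and that the ``same cusp, same $g$'' structure from Proposition~\ref{ht_super} genuinely propagates step by step through the descent. Making the choice of $s_3$ (relative to $s_2$ and $r_0$) quantitatively correct so that a single $r_0$-step cannot overshoot these thresholds is the delicate bookkeeping; the rest is the monotonicity of the explicit height formula.
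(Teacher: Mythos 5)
Your overall plan matches the paper's: choose $s_2$ so that one $a_{r_0}$-step from above $s_2$ cannot drop the height below $2s_1$, choose $s_3 > r_0^2 s_2$ so that one step from above $s_3$ cannot drop below $s_2$, then use the single-cusp identification from Proposition~\ref{ht_super} together with the unimodality of the height function along Siegel trajectories to propagate. The paper proves~\eqref{descent} first and then derives~\eqref{above} and~\eqref{below} by combining with~\eqref{lowbound},~\eqref{up} and time-reversal, whereas you argue~\eqref{below} and~\eqref{above} directly by taking the extremal index $n$ with $\height(T^nx)\le s_3$; both routes work and buy the same thing.

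There is, however, a genuine gap in your justification of~\eqref{lowbound}. You want to bound the factor by which the height can drop as $t$ ranges over $[r_0^{-1},r_0]$, and you propose to use the bounds $|X|<2t^{-1/4}$, $|Z|<t^{-1/2}$ from Proposition~\ref{ht_super}. But those bounds are only available under the hypotheses of that proposition, namely that $\height(xa_{t'})>s$ on an interval $I\ni 1$ \emph{and} that $\height(xa_t)>\height(x)$ for some $t\in I$, $t>1$. At the stage of proving~\eqref{lowbound} you only know $\height(x)>s_2$ at the single time $t=1$, so neither hypothesis is available — the argument is circular. Moreover the bounds would not even help: for $t$ near $1$ they are essentially trivial ($|X|<2$, $|Z|<1$). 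The correct route, which the paper takes, is a direct algebraic estimate from Lemma~\ref{value}: in the big cell, the ratio
\[
\frac{\height(xa_t)}{\height(x)}
 = \frac1t\cdot\frac{\bigl(1+\tfrac14|X|^2\bigr)^2+|Z|^2}{\bigl(\tfrac1t+\tfrac14|X|^2\bigr)^2+|Z|^2}
\]
is bounded below by $r_0^{-2}$ uniformly in $X,Z$ whenever $t\in[r_0^{-1},r_0]$ (and by $1/t\ge r_0^{-1}$ in the small cell); no bounds on $|X|,|Z|$ are needed. You should replace the appeal to Proposition~\ref{ht_super} with this computation. A second, more minor imprecision is in~\eqref{descent}: the height on $[1,r_0]$ is not necessarily non-increasing, and the turning point $t_0=\bigl(\tfrac1{16}|X|^4+|Z|^2\bigr)^{-1/2}$ need not satisfy $t_0\le 1$. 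What $\height(Tx)<\height(x)$ actually gives is $r_0>t_0$, i.e.\ the discrete time $r_0$ (not $t=1$) is past the maximum; this is still enough to conclude that $\height(T^kx)$ is decreasing for $k\ge 1$, which is all you use.
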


\begin{proof}
We will choose $s_2 > s_1$ below.  Let $x\in\homsp_{>s_2}$. 
We wish to prove that $xa_t\in \homsp_{>2s_1}$ for all $t\in [r_0^{-1},r_0]$. 
Since $s_2>s_1$, there exist by Proposition~\ref{samenorm} a unique $\xi\in\Xi$ and an element $g\in G$ such that $x=\Gamma g$ and 
\[
 \height(x) = \height_\xi(x) = \left( \frac{\|v_\xi\varrho(g)\|}{\|v_\xi\varrho(\xi)\|}\right)^{-\frac1q}.
\]
Further, for all $t\in [r_0^{-1},r_0]$, we have
\[
 \height(xa_t) \geq \height_\xi(xa_t) \geq \left( \frac{\|v_\xi\varrho(ga_t)\|}{\|v_\xi\varrho(\xi)\|}\right)^{-\frac1q}. 
\]
However, now it is clear that if~$s_2$ is sufficiently big\footnote{A more careful
analysis using Lemma~\ref{value} reveals that~$s_2>2r_0^2s_1$ suffices.} or equivalently $\|v_\xi\varrho(g)\|$
is sufficiently small,  this will force~$\|v_\xi\varrho(ga_t)\|$ for~$t\in[r_0^{-1},r_0]$ sufficiently small to get the claim in~$(i)$.

For the proof of the remaining properties we will use the (quite natural) monotonicity properties of the functions appearing in Lemma~\ref{value}. 
So assume that
\[ 
 \height(x)=\height_\xi(x)= \left( \frac{\|v_\xi\varrho(g)\|}{\|v_\xi\varrho(\xi)\|}\right)^{-\frac1q}>s_1
\]
for~$x=\Gamma g$ (with the cusp representative $\xi$ and~$\pm v_\xi\varrho(g)$ uniquely determined by Proposition~\ref{samenorm}). 
If~$g=\xi na_sm\sigma$ is as in the first part of Lemma~\ref{value}, then the trajectory comes straight out of the cusp. 
Hence $\height(xa_t)=\frac{s}t$ is monotonically decreasing
until it reaches the value~$s_1$ (at which point Proposition~\ref{samenorm} will not apply any longer).
In the more general case, if $g=\xi na_sm\sigma(1,Z,X)\sigma$ is as in the second part of Lemma~\ref{value}, then the height of~$xa_t$
is given by the formula
\[
 \height(xa_t)=s\cdot\frac{ \frac1t }{ \left(\frac1t+\frac14|X|^2\right)^2 + |Z|^2 }
=\cdot\frac{s}{\frac1t+\frac14|X|^2+\left(|Z|^2+\frac{|X|^4}{16}\right)t },
\]
at least for all~$t$ for which the right hand side is~$\geq s_1$. If~$X=0$ and~$Z=0$ the right hand side equals~$st$ and
the orbit points straight into the cusp. However, in general the right hand side has a unique maximum,
is monotonically increasing left to the maximum and monotonically decreasing to the right of the maximum.

Property \eqref{lowbound} and these monotonicity properties imply \eqref{up}.

We choose~$s_3$ in the same way as~$s_2$ but with~$s_2$ replacing~$s_1$ in \eqref{lowbound}. Assume now~$s>s_3$,~$x\in \homsp_{\leq s_3}$
and~$T^j x\in\homsp_{>s}$ for some~$j\in\N$. We choose the maximal integer~$n<j$ with~$\height(T^nx)\leq s_3$.
By our choice of~$s_3$ we have~$\height(xa_t)>2 s_2$ for~$t\in[r_0^{n-1},r_0^{n+1}]$. Using 
the above monotonicity properties now implies \eqref{below}. Property \eqref{above} follows in the same way
using the first~$n\leq j$ with~$\height(T^nx)\leq s_3$. 

Property \eqref{descent} follows directly from the monotonicity properties.
\end{proof}

Given a point $x\in \homsp$ whose orbit stays near the cusp represented by $\xi$ for the next $S$ steps, Proposition~\ref{Vbound2} below provides non-trivial constraints on small perturbations of $x$ which do not destroy the qualitative behavior of the orbit for these next $S$ steps. The following lemma is needed for its proof.

\begin{lemma}\label{estsojournL2}
Let $D^U$ be a bounded subset of $U$. Let $\xi\in\Xi$ and $g = \xi n a_r m u \in G$ with $n\in N$, $a_r\in A$, $m\in M$ and $u=\sigma(1,Z,X)\sigma\in D^U$. Suppose that 
\[
 \left( \frac{\| v_\xi\varrho(ga_t)\|}{\|v_\xi\varrho(\xi)\|}\right)^{-\frac{1}{q}} > \lambda \left( \frac{\|v_\xi\varrho(g)\|}{\|v_\xi\varrho(\xi)\|}\right)^{-\frac{1}{q}}
\]
for some $t>1$ and $\lambda>0$. Then there exist $c_1, c_2 > 0$, only depending on $D^U$ and $\lambda$, such that 
\[
 |X| < c_1 t^{-1/4} \quad\text{and}\quad |Z| < c_2 t^{-1/2}.
\]
\end{lemma}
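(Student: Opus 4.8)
The plan is to feed everything into the closed-form expression of Lemma~\ref{value} and then run a short elementary estimate, with the boundedness of $D^U$ entering only to bound one numerator by a constant.

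Since $g = \xi n a_r m u$ lies in the big $\xi$-Bruhat cell with $u = \sigma(1,Z,X)\sigma$, I would apply the second formula of Lemma~\ref{value} (with $r$ in place of the $s$ there), which gives, for every $t>0$,
\[
 \left(\frac{\|v_\xi\varrho(ga_t)\|}{\|v_\xi\varrho(\xi)\|}\right)^{-\frac1q} = r\cdot\frac{\frac1t}{\left(\frac1t+\tfrac14|X|^2\right)^2+|Z|^2},
\]
and in particular, at $t=1$,
\[
 \left(\frac{\|v_\xi\varrho(g)\|}{\|v_\xi\varrho(\xi)\|}\right)^{-\frac1q} = r\cdot\frac{1}{\left(1+\tfrac14|X|^2\right)^2+|Z|^2}.
\]
Substituting both into the hypothesis, cancelling the positive factor $r$, and clearing the two positive denominators, the assumption becomes
\[
 \frac1t\left[\left(1+\tfrac14|X|^2\right)^2+|Z|^2\right] > \lambda\left[\left(\tfrac1t+\tfrac14|X|^2\right)^2+|Z|^2\right].
\]

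Next I would discard nonnegative terms on the right: expanding $\left(\tfrac1t+\tfrac14|X|^2\right)^2 = \tfrac1{t^2}+\tfrac{|X|^2}{2t}+\tfrac{|X|^4}{16}$ shows that $\left(\tfrac1t+\tfrac14|X|^2\right)^2+|Z|^2 \geq \tfrac1{16}|X|^4+|Z|^2$, so
\[
 \tfrac1{16}|X|^4+|Z|^2 < \frac{1}{\lambda t}\left[\left(1+\tfrac14|X|^2\right)^2+|Z|^2\right].
\]
Because $D^U$ is bounded, under the parametrization $U\ni\sigma(1,Z,X)\sigma\leftrightarrow(Z,X)$ there is a constant $R=R(D^U)$ with $|X|,|Z|\leq R$, whence the bracket on the right is at most $L_0 \sceq \left(1+\tfrac14R^2\right)^2+R^2$, a constant depending only on $D^U$. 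Thus $\tfrac1{16}|X|^4+|Z|^2 < L_0/(\lambda t)$, and reading off the two summands separately gives $|X| < (16L_0/\lambda)^{1/4}\,t^{-1/4}$ and $|Z| < (L_0/\lambda)^{1/2}\,t^{-1/2}$; I would set $c_1 \sceq (16L_0/\lambda)^{1/4}$ and $c_2 \sceq (L_0/\lambda)^{1/2}$, which depend only on $D^U$ and $\lambda$. The degenerate case $u=\id$, i.e.\ $(Z,X)=(0,0)$, needs no separate argument, the conclusion being trivially true.

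There is no genuine obstacle here: the statement is bookkeeping once Lemma~\ref{value} is available. The two points that deserve a little care are confirming that it is the big-cell (second) formula of Lemma~\ref{value} that applies, and not throwing away too much in the estimate — specifically, keeping the full factor $\left(1+\tfrac14|X|^2\right)^2+|Z|^2$ and bounding it through $D^U$ rather than trying to absorb it, so that $c_1$ and $c_2$ end up depending on exactly $D^U$ and $\lambda$ and on nothing else (in particular not on $r$ nor on the specific $t$).
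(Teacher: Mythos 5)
Your proof is correct, and it reaches the paper's inequality by the same route (the closed formula of Lemma~\ref{value} applied at $t$ and at $t=1$, then cancelling $r$), but the elementary endgame is genuinely simpler than the paper's. The paper first disposes of $\lambda\geq 1$ by citing Proposition~\ref{ht_super}, and for $0<\lambda<1$ it keeps the full inequality \eqref{maseq}, works on the range $t>\lambda^{-1}+\eps$ to extract $|X|^2 < 4\frac{(t\lambda^{-1})^{1/2}-1}{t^{1/2}-\lambda^{-1/2}}t^{-1/2}$ and then a similar (messier) bound for $|Z|$, and finally patches the remaining range $1<t\leq\lambda^{-1}+\eps$ by enlarging the constants using boundedness of $|X|,|Z|$ on $D^U$. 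You instead lower-bound $\bigl(\tfrac1t+\tfrac14|X|^2\bigr)^2+|Z|^2$ by $\tfrac1{16}|X|^4+|Z|^2$ and upper-bound the $t$-independent bracket by the constant $L_0$ coming from $D^U$, which handles all $\lambda>0$ and all $t>1$ in one stroke, with no case split and no patching; this is exactly in the spirit of how boundedness of $D^U$ enters the paper's argument anyway (its final constants also depend on $D^U$ through the small-$t$ enlargement), so nothing is lost relative to the statement. The only price of your shortcut is quantitative and irrelevant here: your constants degrade with the diameter of $D^U$ uniformly in $t$, whereas the paper's asymptotic bound for large $t$ depends only on $\lambda$; since the lemma explicitly permits dependence on $D^U$ and $\lambda$, your argument fully proves the statement.
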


\begin{proof}
For $\lambda\geq 1$, the statement is already proven in Proposition~\ref{ht_super}. So suppose $1 > \lambda > 0$. Invoking Lemma~\ref{value} we find
\begin{equation}\label{maseq}
t \left[ \left(\frac1t + \frac14|X|^2\right)^2 + |Z|^2 \right] < \frac{1}{\lambda} \left[ \left(1 + \frac14|X|^2\right)^2 + |Z|^2 \right].
\end{equation}
Thus, 
\[
 t \left( \frac1t + \frac14|X|^2\right)^2 < \frac1{\lambda} \left(1+\frac14|X|^2\right)^2 + (\lambda^{-1} - t) |Z|^2.
\]
For $t > \lambda^{-1}$, it follows that
\[
 t \left( \frac1t+\frac14|X|^2\right)^2 < \lambda^{-1} \left( 1 + \frac14|X|^2\right)^2.
\]
Therefore,
\[
 |X|^2 < 4 \frac{ (t\lambda^{-1})^{\frac12} - 1}{ t^{\frac12} - \lambda^{-\frac12} } t^{-\frac12}.
\]
Hence, for $t>\lambda^{-1} + 1$,
we have
\[
 |X| < c_1 t^{-\frac14}
\]
for some constant $c_1 > 0$. Since $|X|$ is bounded, by possibly choosing a larger $c_1$, this estimate holds for all $t>1$.
To deduce the bound for $|Z|$ we note that \eqref{maseq} yields
\begin{align*}
 (t-\lambda^{-1}) |Z|^2 &< \lambda^{-1} \left(1+\frac14|X|^2\right)^2 - t\left(\frac1t + \frac14|X|^2\right)^2
\\
& < \lambda^{-1}(1+\frac14c_1^2)^2=c_3.
\end{align*}
Suppose that $t>\lambda^{-1} + 1$. 
Then
\[
 |Z|^2 < 
\frac{c_3}{t-\lambda^{-1}}=\frac{c_3}{1-(t\lambda)^{-1}}t^{-1}.
\]
The factor in front of $t^{-1}$ is bounded. Thus, 
\[
 |Z| < c_2 t^{-\frac12}
\]
for some constant $c_2 > 0$. As before, since $|Z|$ is bounded, this estimate holds for all $t>1$ after possibly choosing a larger $c_2$. This completes the proof.
\end{proof}

Let $d$ be a the left-$G$-invariant metric on $G$ induced from a left-invariant Riemannian metric that is induced by an inner product on $\mf g$. For $r>0$ let $B_r^G$ denote the open $d$-ball in $G$ centered at the identity of $G$ with radius $r$.  For $\kappa > 0$ let $D^U_\kappa$ denote the subset of $U$ consisting of the elements $u=\sigma (1,Z,X) \sigma$ with $|Z| < \kappa$ and $|X| < \kappa$, and let $D_\kappa^{NAM} \sceq B_\kappa^G \cap NAM$. Further let 
\begin{equation}\label{choicekappa}
 D_\kappa \sceq D_\kappa^U D_\kappa^{NAM}.
\end{equation}
Then $D_\kappa$ is open. We choose $\kappa>0$ such that for all $h\in D_\kappa$ we have
\begin{equation}\label{kappa}
 \| \varrho(h)\| , \| \varrho(h^{-1})\| \leq \left(\frac{s_1}{s_2}\right)^{-q}.
\end{equation}
We consider $\kappa$ to be fixed throughout and will shrink it if necessary (e.g.\@ in the paragraph before Lemma~\ref{maxexists}).

\begin{prop}\label{Vbound2}
There exist $c_3,c_4>0$  such that the following holds: Let $x\in\homsp$, $S\in\N$, $h\in D_\kappa$ be such that $\height(T^jx) > s_2$ and $\height(T^j(xh)) > s_2$ for $j=0,\ldots, S$, $\height(T^Sx) > \height(x)$ and $\height(T^S(xh)) > \height(xh)$. Suppose that $h=\sigma(1,Z,X)\sigma na_rm$. Then 
\[
 |X| \leq c_3 r_0^{-S/4} \quad\text{and}\quad |Z|\leq c_4 r_0^{-S/2}.
\]
\end{prop}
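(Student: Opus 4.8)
The plan is to compare the behavior of the trajectory of $x$ with that of $xh$ using the single-cusp analysis already developed, and to extract the bounds from Lemma~\ref{estsojournL2} applied to a suitable representative of $xh$. First I would invoke Lemma~\ref{s2s3}\eqref{up} (or rather Proposition~\ref{ht_super}, which applies since $\height(T^jx)>s_2>s_1$ on the relevant time window): the hypotheses $\height(T^jx)>s_2$ for $j=0,\dots,S$ and $\height(T^Sx)>\height(x)$ force the continuous trajectory $t\mapsto \height(xa_t)$ to stay above $s_2$ on $[1,r_0^S]$ and to be of the ``excursion into the cusp'' type, so there is a unique cusp $\xi\in\Xi$ and a representative $g\in G$ with $x=\Gamma g$, $g=\xi n a_\rho m u$, $u=\sigma(1,Z_0,X_0)\sigma\in U$, realizing $\height(xa_t)=\bigl(\|v_\xi\varrho(ga_t)\|/\|v_\xi\varrho(\xi)\|\bigr)^{-1/q}$ on $[1,r_0^S]$. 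The same reasoning applied to $xh$ gives a unique cusp $\xi'$ and a representative $g'$; the point is that $\xi'=\xi$, which I would deduce from the fact that $h\in D_\kappa$ is close to the identity and $\height(x),\height(xh)$ are both very large, so by \eqref{kappa} the norms $\|v_{\xi'}\varrho(\cdot)\|$ of $x$ and $xh$ only differ by a bounded factor, forcing the same cusp (alternatively, that the cusp neighborhoods $\homsp(\xi,s)$ are disjoint for $s\geq s_1$ and $xh$ stays high, hence in the same component as $x$ once $\kappa$ is small enough).

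Next I would pin down the relation between $g$ and a representative of $xh$. Write $gh = \xi n a_\rho m u \cdot \sigma(1,Z,X)\sigma n' a_r m'$; I want to move the right-hand $NAM$-factor and the conjugated $(1,Z,X)$ factor past $u$ to rewrite $gh$ in the form $\xi \tilde n a_{\tilde\rho}\tilde m \tilde u$ with $\tilde u = \sigma(1,\tilde Z,\tilde X)\sigma\in U$, exactly as in the normal form of Proposition~\ref{ht_super}. The key observation is that $U$ normalizes nothing helpful directly, but the product $u\cdot\sigma(1,Z,X)\sigma = \sigma(1,Z_0,X_0)(1,Z,X)\sigma = \sigma(1,Z_0+Z+\tfrac12[X_0,X],X_0+X)\sigma$ is again of the form $\sigma(1,\cdot,\cdot)\sigma$ by the group law in $N$, and the remaining $na_rm$-factor on the right can be absorbed using $\xi NAMU$ being a coordinate patch (the big $\xi$-Bruhat cell): multiplying $\xi(\text{stuff})\cdot\sigma(1,Z',X')\sigma na_rm$ on the right by elements of $NAM$ only changes the $\xi NAM$-part and the $U$-part in a controlled, bounded way since $h\in D_\kappa$. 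So I get a representative $g' = \xi n'' a_{\rho'} m'' u''$ of $xh$ with $u''=\sigma(1,Z'',X'')\sigma$ and $|Z''|,|X''|$ comparable (up to the fixed constant $\kappa$) to $|Z|,|X|$ — in fact $Z'' = Z + (\text{bounded perturbation of }Z_0)$ etc., but the crucial point is $|Z''|\leq |Z| + \kappa$-type bounds and conversely $|Z|\leq |Z''| + (\text{const})$.

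Then I would apply Lemma~\ref{estsojournL2} to $g'$ with the bounded set $D^U = D^U_{\text{(some radius depending on }\kappa)}$, $t = r_0^S$, and $\lambda = 1$ (using $\height(T^S(xh))>\height(xh)$, i.e.\ $\bigl(\|v_\xi\varrho(g'a_{r_0^S})\|/\|v_\xi\varrho(\xi)\|\bigr)^{-1/q} > \bigl(\|v_\xi\varrho(g')\|/\|v_\xi\varrho(\xi)\|\bigr)^{-1/q}$, which is the $\lambda=1$ hypothesis after noting $\height(T^j(xh))$ is realized by $g'$ on $[1,r_0^S]$). This yields $|X''| < c_1 r_0^{-S/4}$ and $|Z''| < c_2 r_0^{-S/2}$ with $c_1,c_2$ depending only on $D^U$ and $\lambda=1$, hence only on $\kappa$. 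Translating back via the comparison of the previous paragraph gives $|X|\leq c_3 r_0^{-S/4}$ and $|Z|\leq c_4 r_0^{-S/2}$ for constants $c_3,c_4$ depending only on $\kappa$ (which is itself fixed), as claimed.

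The main obstacle I anticipate is the bookkeeping in the second paragraph: carefully rewriting $gh$ in the big-Bruhat-cell normal form $\xi NAMU$ and controlling how the $U$-coordinate $(Z,X)$ of $h$ (conjugated, and combined with the pre-existing $u$) contributes to the $U$-coordinate of the new representative $g'$, while checking that the ``bounded perturbation'' really is bounded by a constant depending only on $\kappa$ and not on $x$ or $S$. This requires the explicit multiplication formulas for $NA$ acting on $D$ and the Bruhat decomposition $G=\xi NAM\sigma\cup\xi NAMU$ from Section~4, together with the observation that $\height(x)$ large (which follows from $S\geq 1$ and $\height(T^Sx)>\height(x)$ only indirectly — more robustly, from $\height(x)>s_2$) keeps us safely inside the big cell where these coordinates are valid. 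Once the normal form is in hand, the rest is a direct citation of Lemma~\ref{estsojournL2}.
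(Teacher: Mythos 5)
Your overall strategy has the right skeleton—identify the common cusp $\xi$ via smallness of $h\in D_\kappa$, then leverage the ``excursion'' hypothesis to get $t^{-1/4}$ and $t^{-1/2}$ decay of the unstable coordinates—but it takes a genuinely different route from the paper, and the route has a real gap at the step you yourself flag as ``the main obstacle.''

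The paper does \emph{not} try to put a representative of $xh$ into $\xi NAMU$ normal form. Instead, after establishing that the same cusp $\xi$ and the same base representative $g$ serve both $x$ and $xh$ (via the submultiplicative norm estimate with $\|\varrho(a_{t^{-1}}ha_t)\|\le(s_1/s_2)^{-q}$), it writes $h=u_2h_2$ with $u_2\in U$, $h_2\in NAM$, and applies Lemma~\ref{estsojournL2} to the synthetic element $gu_2=\xi n a_{r_1} m_1 (u_1u_2)$, whose $U$-coordinate is literally $u_1u_2$—no Bruhat bookkeeping needed. The price is that $\Gamma(gu_2)$ is not $xh$; the two differ by right multiplication by $h_2$, and the paper compensates with the two-sided estimates $\|v_\xi\varrho(gh)\|\le\|v_\xi\varrho(gu_2)\|(s_1/s_2)^{-q}$ and $\|v_\xi\varrho(gu_2 a^S)\|\le\|v_\xi\varrho(gha^S)\|(s_1/s_2)^{-q}$, which force the $\lambda=(s_1/s_2)^2<1$ parameter in Lemma~\ref{estsojournL2}. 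This is precisely why the lemma was stated with $\lambda>0$ rather than only $\lambda\ge1$. Finally, combining the resulting bound on $|X_1+X_2|$, $|Z_1+Z_2+\tfrac12[X_1,X_2]|$ with the ``first bound'' $|X_1|<2r_0^{-S/4}$, $|Z_1|<r_0^{-S/2}$ (from applying Proposition~\ref{ht_super} to $x$) extracts the bounds on $|X_2|=|X|$, $|Z_2|=|Z|$.

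Your version has two concrete problems. First, the claim that the normal-form $U$-coordinate $(Z'',X'')$ of a representative of $xh$ differs from $(Z,X)$ by a ``bounded perturbation,'' and in particular that $|Z|\le|Z''|+(\text{const})$, is neither justified nor in the right form: if that constant were an absolute $O(\kappa)$ it would not yield the required $r_0^{-S/2}$ decay. The honest relation is multiplicative/relative (moving $n'a_rm'$ past a $U$-element via Bruhat decomposition rescales the $U$-coordinate by a factor close to $1$ when the $U$-coordinate is small, but the deviation is proportional to that coordinate, not absolute), and establishing it uniformly in $G$ is exactly the bookkeeping you do not carry out. Second, you never invoke the bound on the $U$-coordinates $(Z_0,X_0)=(Z_1,X_1)$ of the representative $g$ of $x$, but you cannot pass from a bound on $Z''$ (which Proposition~\ref{ht_super} applied to $xh$ already gives, without any use of Lemma~\ref{estsojournL2}) to a bound on $Z$ without it, since $Z''$ is built, up to the Bruhat correction, from $Z_1+Z+\tfrac12[X_1,X]$, not from $Z$ alone. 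The paper's choice of working with $gu_2$ rather than the normal form of $gh$ sidesteps both issues at the modest cost of a loss factor in $\lambda$.
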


\begin{proof}
By Lemma~\ref{s2s3} we have $\height(xa_t)>s_2$ and $\height(xha_t)>s_2$ for all $t\in [1,r_0^S]$. Since $s_2 > s_1$,  Proposition~\ref{ht_super} shows that there exist a unique cusp representative $\xi\in\Xi$ and an element $g\in G$ such that $x=\Gamma g$ and 
\[
\height(xa_t) = \left( \frac{\|v_\xi\varrho(ga_t)\|}{\|v_\xi\varrho(\xi)\|}\right)^{-\frac1q}
\]
for all $t\in [1,r_0^S]$. Moreover, there exist a unique cusp representative $\xi_1\in\Xi$ and an element $g_1\in G$ such that $xh=\Gamma g_1 h$ and 
\[
\height(xha_t) = \left( \frac{\|v_{\xi_1}\varrho(g_1ha_t)\|}{\|v_{\xi_1}\varrho({\xi_1})\|}\right)^{-\frac1q} 
\]
for all $t\in [1,r_0^S]$. In the following we show that $\xi=\xi_1$ and that we can choose $g_1=g$.
We have
\[
\| v_\xi\varrho(gha_t) \|= \|v_\xi\varrho(ga_t a_{t^{-1}}ha_t) \| \leq \|v_\xi\varrho(ga_t)\|\cdot\|\varrho(a_{t^{-1}}ha_t) \|.
\]
Now, $a_{t^{-1}}ha_t \in D_\kappa$ for $t$ near $1$, say in the non-trivial interval $I$. By \eqref{kappa}, for $t\in I$ this yields
\[
 \|\varrho(a_{t^{-1}}ha_t) \| \leq \left( \frac{s_1}{s_2} \right)^{-q}.
\]
Thus, for $t\in I$, 
\begin{align*}
 \| v_{\xi}\varrho(gha_t)\| &\leq \| v_{\xi}\varrho(ga_t)\| \left(\frac{s_1}{s_2}\right)^{-q} 
 < \left(\frac{s_1}{s_2}\right)^{-q} s_2^{-q} \|v_{\xi}\varrho(\xi)\| 
\\
& = s_1^{-q} \| v_{\xi}\varrho(\xi)\|.
\end{align*}
Hence, for $t\in I$,
\[
\left( \frac{\| v_{\xi}\varrho(gha_t)\|}{\|v_{\xi}\varrho(\xi)\|}\right)^{-\frac1q} > s_1.
\]
The uniqueness of $\xi_1$ yields $\xi_1=\xi$. Moreover, we can choose $g_1=g$ for $t\in I$. As in the proof of Proposition~\ref{ht_super}, we see that we can choose $g_1=g$ for all $t\in [1,r_0^S]$.

Proposition~\ref{ht_super} shows that $g \in \xi NAMU$, say $g=\xi n_4 a_{r_1} m_1 u_1$ with $u_1=\sigma (1, Z_1, X_1)\sigma$, and that 
\begin{equation}\label{firstbound}
  |X_1| < 2 r_0^{-S/4} \quad\text{and}\quad |Z_1| < r_0^{-S/2}.
\end{equation}
Suppose that $h=u_2n_3a_{r_2}m_2$ and set $h_2\sceq n_3a_{r_2}m_2$. Then
\begin{align*}
\|v_\xi\varrho(gh)\| & = \|v_\xi\varrho(gu_2h_2)\| \leq \| v_\xi\varrho(gu_2)\| \|\varrho(h_2)\| 
 \leq \|v_\xi\varrho(gu_2)\| \left(\frac{s_1}{s_2}\right)^{-q}
\intertext{and}
\|v_\xi\varrho(gu_2a^S)\| & = \|v_\xi\varrho(gha^Sa^{-S}h_2^{-1}a^S)\| 
\\
& \leq \|v_\xi\varrho(gha^S)\| \|\varrho(a^{-S}h_2^{-1}a^S)\| \leq \| v_\xi\varrho(gha^S)\| \left(\frac{s_1}{s_2}\right)^{-q}.
\end{align*}
This yields
\begin{align*}
\left( \frac{\|v_\xi\varrho(gu_2a^S)\|}{\|v_\xi\varrho(\xi)\|}\right)^{-\frac1q} & \geq \frac{s_1}{s_2} \left(\frac{\|v_\xi\varrho(gha^S)\|}{\|v_\xi\varrho(\xi)\|}\right)^{-\frac1q} = \frac{s_1}{s_2} \height(xha^S) 
\\
& > \frac{s_1}{s_2}\height(xh) = \frac{s_1}{s_2}\left( \frac{\|v_\xi\varrho(gh)\|}{\|v_\xi\varrho(\xi)\|}\right)^{-\frac1q} 
\\
& \geq \left(\frac{s_1}{s_2}\right)^2 \left(\frac{\|v_\xi\varrho(gu_2)\|}{\|v_\xi\varrho(\xi)\|}\right)^{-\frac1q}.
\end{align*}
Let $u_2 = \sigma (1, Z_2, X_2)\sigma$. Then 
\[
 u_1u_2 = \sigma (1, Z_1+Z_2+\tfrac12 [X_1,X_2], X_1 + X_2) \sigma.
\]
From \eqref{firstbound} and $u_2\in D_\kappa^U$ it follows that
\[
 |X_1 + X_2| \leq |X_1| + |X_2| < 2 + \kappa.
\]
Moreover, using triangle inequality and \cite[Lemma~2.12, Proposition~3.3]{Pohl_isofunddom} we find
\[
 |Z_1 + Z_2 + \tfrac12[X_1,X_2]| \leq |Z_1| + |Z_2| + \tfrac12 |X_1| |X_2| < 1+ 2\kappa.
\]
Thus, $u_1u_2$ is contained in the bounded set $D^U_{2 + 2\kappa}$. Note that this set only depends on $\kappa$.
Then Lemma~\ref{estsojournL2} gives
\[
 |X_1+X_2| < c_1 r_0^{-S/4}\quad\text{and}\quad |Z_1+Z_2+\tfrac12[X_1,X_2]| < c_2r_0^{-S/2},
\]
where the constants $c_1,c_2$ only depend on $s_1,s_2$ and $\kappa$. It follows that 
\begin{align*}
 |X_2| &< c_1 r_0^{-S/4} + |X_1| < (c_1+2) r_0^{-S/4}
\intertext{and}
 |Z_2| &\leq |Z_1 + Z_2 + \tfrac12[X_1,X_2]| + |Z_1| + \tfrac12|X_1||X_2|
\\ 
& \leq c_2r_0^{-S/2} + r_0^{-S/2} + (c_1+2)r_0^{-S/2}.
\end{align*}
This completes the proof.
\end{proof}

\section{Estimate of metric entropy and proof of Theorem~A}\label{sec_main}

This section, in which we prove Theorem~A, can be understood independently from the previous ones if one is willing to accept the following facts previously shown: The height level $s_3$ is chosen such that the connected parts of $\homsp_{>s_3}$ (thus, cuspidal ends of uniform ``length'') can be identified with $(\Gamma\cap P)\backslash C$, where $C$ is the cylindrical set $C=\xi A_{s_3} NK$ at the cusp represented by $\xi$ of the considered end and $P$ is the corresponding minimal parabolic subgroup in $G$. In particular, this means that connected parts of geodesic trajectories in $\homsp_{>s_3}$ can be identified with any representing geodesic trajectories in $C$. As a consequence we know (see Lemma~\ref{s2s3}) that (discretized) geodesic trajectories in $\homsp_{>s_3}$ which start to move out of the cusp actually descend to below height level $s_3$, and geodesics in $\homsp$ which move from one of these cuspidal ends to another one necessarily have to pass through the compact part 
$\homsp_{\leq s_3}$. 
Moreover, if the trajectories of two nearby points $x, xh$ in $\homsp$ ($h\in G$) stay together near a cusp (meaning in the same connected component of $\homsp_{>s_3}$) for ``time'' $t$, then the unstable component of $h$ is restricted (up to a multiplicative constant) by $t^{-1/2}$ in the direction of the long root and by $t^{-1/4}$ in the direction of the short root (see Proposition~\ref{Vbound2}). 

Let $\mc M_1(\homsp)^T$ denote the set of $T$-invariant probability measures on $\homsp$. Let $\mu\in \mc M_1(\homsp)^T$ and suppose that $\mc P$ is a partition of $\homsp$ (consisting of measurable sets). We denote the static entropy of $\mc P$ with respect to $\mu$ by 
\begin{equation}\label{eq:static}
 H_\mu(\mc P) = - \sum_{P\in\mc P} \mu(P)\log\mu(P).
\end{equation}
For $n\in\N_0$ let
\[
 \mc P_0^n \sceq \bigvee_{j=0}^{n} T^{-j}\mc P = \big\{ P_{j_0} \cap T^{-1} P_{j_1} \cap \ldots \cap T^{-n}P_{j_n}\ \big\vert\ P_{j_i}\in\mc P\big\}.
\]
Then 
\[
 h_\mu(T,\mc P) = \inf_{n\in\N} \frac1n H_\mu\left( \mc P_0^{n-1} \right)
\]
is the dynamical entropy of $(T,\mc P)$ with respect to $\mu$. Finally,
\begin{align*}
 h_\mu(T) &= \sup\{ h_\mu(T,\mc P) \mid \text{$\mc P$ partition of $\homsp$, $H_\mu(\mc P) < \infty$}\}
\\
& = \sup\{ h_\mu(T,\mc P) \mid \text{$\mc P$  finite partition of $\homsp$}\}
\end{align*}
is the (metric) entropy of $T$ with respect to $\mu$.

In our set-up there exists a unique maximal entropy measure for $T$. We provide a reference for this statement and recall how to calculate its value in the following proposition. Set $p_1 \sceq \dim \mf g_1$, $p_2 \sceq \dim \mf g_2$ and recall that $\wt a = a_{r_0}$.

\begin{prop}\label{maxentropy}
The maximal entropy of $T$ is achieved by the Haar measure~$m$ on~$\homsp$ and is given by 
\[
 h_{m}(T) = \max\bigl\{ h_\mu(T) \mid  \mu\in\mc M_1(\homsp)^T\bigr\}= \left(\frac{p_1}{2}+p_2\right)\log r_0.
\]
Moreover, the Haar measure is the only~$T$-invariant probability measure that achieves this maximal entropy.
\end{prop}

\begin{proof}
The statement follows from a combination of the proposition in Section~9.3 in \cite{Margulis_Tomanov} and Lemma~9.5 and Proposition~9.6 in \cite{Margulis_Tomanov}. If $G$ is algebraic, a more accessible reference is \cite[Theorem~7.6]{Einsiedler_Lindenstrauss}. Note that 
\[
 -\log \det\left( \Ad_a\vert_{\mf g_{-1}\oplus\mf g_{-2}}\right) = \left(\frac{p_1}{2} + p_2\right)\log r_0. 
\]
\end{proof}

For $r>0$ we call
\begin{equation}\label{Bowengroup}
 B_L\sceq B_L(r)\sceq\bigcap_{j=0}^{L-1} \tilde a^j B_r^G \tilde a^{-j}
\end{equation}
a \textit{(forward) Bowen $L$-ball} in $G$ with \textit{(radius) parameter} $r$.
Further, any subset of $\homsp$ of the form
\begin{equation}\label{Bowenball}
 xB_L = xB_L(r)
\end{equation}
with $x\in\homsp$ is called a \textit{Bowen $L$-ball} in $\homsp$ with \textit{center} $x$ and \textit{(radius) parameter} $r$. 

Through the work of Brin--Katok \cite{Brin_Katok} it is well known that entropy is strongly related to the decay rate of
the measure of Bowen~$L$-balls. For the Haar measure this can be established quite directly and
in the following strong form (which will be used in many covering arguments below).

\begin{lemma}\label{massBowen}
Let $r>0$ be sufficiently small (depending only on~$G$) and $L\in\N$. Then 
\[
 r^{\dim G} e^{-h_m(T)L}\ll m(B_L(r)) \ll r^{\dim G} e^{-h_m(T)L},
\]
where the implied constants only depend on $G$ and $\tilde a$.
\end{lemma}

\begin{proof}
Recall from \eqref{choicekappa} the definition of $D_r$. We find $r_1,r_2>0$ (uniform for small $r$) such that
\[
 D_{r_1r} \subseteq B_r^G \subseteq D_{r_2r}.
\]
Then
\[
 D^{(L)}(r_1r)\sceq\bigcap_{j=0}^{L-1} \tilde a^j D_{r_1r} \tilde a^{-j} \subseteq B_L(r) = \bigcap_{j=0}^{L-1}\tilde a^j B_r^G \tilde a^{-j} \subseteq \bigcap_{j=0}^{L-1}\tilde a^j D_{r_2r} \tilde a^{-j}.
\]
One easily checks that
\[
 D^{(L)}(r) = \tilde a^{L-1} D_r^U \tilde a^{-(L-1)} D_r^{NAM}
\]
and
\[
 \tilde a^{L-1} D_r^U \tilde a^{-(L-1)} = \left\{ \sigma (1, Z, X) \sigma\ \left\vert\ |Z|< r r_0^{-(L-1)/2},\ |X|< r r_0^{-(L-1)/4} \right.\right\}.
\]
Let $du, dn, da$ and $dm$ be Haar measures on $U, N, A$ and $M$, respectively, and let $dg$ denote the Haar measure on $G$. With appropriate normalizations we have (\cite[Chapter~I, Proposition~5.21]{Helgason_gga}, and \cite[Chapter~I, Corollary~5.2]{Helgason_gga} for the change of order of integration)
\[
 \int_G f(g)dg = \int_{U\times N\times A\times M} f(unam) du dn da dm
\]
for all $f\in C_c(G)$. Further, we recall from \cite[Chapter~I, Theorem~1.14]{Helgason_gga} that if the support of $f\in C_c(G)$ is contained in the canonical coordinate neighborhood of $G$, then 
\begin{equation}\label{Jac}
 \int_G f(g) dg = \int_{\mf g} f(\exp W) \det\left( \frac{1-e^{-\ad W}}{\ad W}\right) dW,
\end{equation}
where $dW$ is the Euclidean measure on $\mf g$ which coincides with $(dg)_{\id}$. 

We now use the coordinates $(Z,X)\in \mf g_2\times \mf g_1$ for the Lie algebra $\mf u$ of $U$. Since $\mf u$ is two-step nilpotent, the Jacobian determinant in \eqref{Jac} (applied for $G=U$) equals $1$ for all $W\in\mf u$. With an appropriate global constant $c_U$, the Haar measure $du$ is then
\[
 m_U(f) \sceq \int_U f(u) du = c_U\int_{\mf u} f(\sigma(1,Z,X)\sigma)dZdX.
\]
Thus,
\[
 m_U\left( \tilde a^{L-1} D_r^U \tilde a^{-(L-1)} \right) = c_U r^{\dim U} e^{-h_m(T)(L-1)}.
\]
Hence
\[
 m\big(D^{(L)}(r_1r)\big) = c_U r_1^{\dim U} r^{\dim U} e^{-h_m(T)(L-1)} m_{NAM}\big( D_{r_1r}^{NAM} \big),
\]
where $m_{NAM} \sceq dn\otimes da\otimes dm$. We may assume that $D_r^{NAM}=B_r^{NAM}$. For sufficiently small $r>0$, the parameter space in $\mf n\times \mf a\times \mf m$ for the set $D_r^{NAM}$ is the spherical normal neighborhood $V_r = \{W\in\mf n\times \mf a\times \mf m \mid \|W\| < r\}$ (see \cite[Chapter~I, Proposition~9.4]{Helgason3}). On this neighborhood, the Jacobian determinant \eqref{Jac} (applied to $G=NAM$) is bounded from above and from below by some positive constants. Hence, \eqref{Jac} yields
\[
 r^{\dim NAM} \ll m_{NAM}\big( D_r^{NAM} \big) \ll r^{\dim NAM}.
\]
This completes the proof.
\end{proof}

We pick $\lambda >0 $ such that $r_0\lambda$ is an injectivity radius of $\homsp_{\leq s_3}$ and use it throughout as radius parameter for Bowen balls. Recall the set $D_\kappa$ and the choice of $\kappa$ from \eqref{choicekappa}-\eqref{kappa}. We may choose $\lambda$ so small such that $B_\lambda^G \subseteq D_\kappa$.

In Lemma~\ref{coverimp} below we will estimate how many Bowen $L$-balls are needed to cover $P \in \eta_{0}^{L-1}$, $P\subseteq \homsp_{\leq s_3}$, for certain partitions $\eta$ of $\homsp$.

\begin{lemma}\label{maxexists}
Let $s>s_3$. Then there exists $k_{\text{max}}\in\N$ such that whenever $x\in\homsp_{>s}$ satisfies $Tx,\ldots, T^kx\in \homsp_{\leq s}\cap \homsp_{>s_3}$, then $k\leq k_{\text{max}}$.
\end{lemma}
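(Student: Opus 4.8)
The plan is to show that once a point $x$ enters the ``slab'' $\homsp_{\leq s}\cap\homsp_{>s_3}$ coming from $\homsp_{>s}$, it can only stay there for a uniformly bounded number of steps before dropping below $s_3$. Since $x\in\homsp_{>s}$ and $Tx\in\homsp_{\leq s}$, part \eqref{descent} of Lemma~\ref{s2s3} already tells us there is some $n$ with $T^nx\in\homsp_{\leq s_3}$ and $T^kx\in\homsp_{\leq s}$ for $k=1,\dots,n$; the content of the present lemma is that this $n$ (equivalently, the length of the sojourn of the \emph{discretized} trajectory in the slab) is bounded by a constant $n_{\text{max}}$ depending only on $s$ (and $s_2$, $s_3$, $r_0$), not on $x$.

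First I would use part \eqref{up} of Lemma~\ref{s2s3} together with Proposition~\ref{ht_super} to pass to the Siegel set picture: as long as consecutive discretized points stay above $s_2$, the continuous trajectory $t\mapsto\height(xa_t)$ stays above $s_2>s_1$, so there is a unique cusp $\xi$ and a representative $g\in G$ with $\height(xa_t) = \bigl(\|v_\xi\varrho(ga_t)\|/\|v_\xi\varrho(\xi)\|\bigr)^{-1/q}$ on the relevant interval, and $g$ lies in one of the two $\xi$-Bruhat cells. In the small-cell case $g=\xi n a_r m\sigma$, Lemma~\ref{value} gives $\height(xa_t)=r/t$, which is strictly decreasing, so once we are below height $s$ at $t=1$ we stay below $s$ and in fact descend like $r/r_0^k$; since at $t=1$ the height is at most $s$ (indeed, we came from above $s$ at the previous step, so $\height(x)>s$ but $\height(Tx)=r/r_0\le s$, giving $r\le r_0 s$), after at most $\log_{r_0}(s/s_3)+1$ further steps the height drops below $s_3$. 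In the big-cell case $g=\xi n a_r m\sigma(1,Z,X)\sigma$, the height is $r\cdot\frac{t^{-1}}{(t^{-1}+\frac14|X|^2)^2+|Z|^2}$, which is unimodal in $t$: since $\height(Tx)<\height(x)$, the trajectory is already on the decreasing branch at $t=1$ (as in the proof of \eqref{descent}), hence decreasing on all of $[1,r_0^n]$, and a crude comparison $\height(xa_{r_0 t})/\height(xa_t)\le 1/r_0$ (valid on the decreasing branch, or at worst $\le$ some fixed $c<1$ after possibly enlarging the slab bound) shows the height again drops below $s_3$ after boundedly many steps.

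Concretely, the bound comes out as follows: on the decreasing branch, $\height(T^{k+1}x)/\height(T^kx)\le r_0^{-1}$ in the small-cell case and, in the big-cell case, one checks from the explicit formula that the ratio is at most $r_0^{-1}$ as well (the factor $\frac{t^{-1}}{(t^{-1}+\frac14|X|^2)^2+|Z|^2}$ is decreasing in $t$ on that branch, and replacing $t$ by $r_0 t$ multiplies $t^{-1}$ by $r_0^{-1}$ while only increasing the denominator when $t^{-1}$ is small). Starting from a value $\le s$ at the first slab step and needing to stay $\ge s_3$, we get $n_{\text{max}}\le\lceil\log_{r_0}(s/s_3)\rceil+1$; I would just assert the existence of such an $n_{\text{max}}$ rather than optimize the constant.

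The main obstacle is the big-cell case: one must rule out that the perturbation parameters $(Z,X)$ conspire so that the unimodal height profile, restricted to the discrete times $r_0^k$, stays in the slab $(s_3,s]$ for a long time. The key point resolving this is that we are on the \emph{decreasing} branch throughout (forced by $\height(Tx)<\height(x)$, exactly as in the proof of Lemma~\ref{s2s3}\eqref{descent}), and on that branch the step-to-step multiplicative decay is bounded below by a fixed factor independent of $(Z,X)$; once that monotone-geometric-decay estimate is in hand, the bound on $n$ is immediate. A secondary point is merely bookkeeping: we must know the height at the first slab step is at most $s$ (clear, since that step lies in $\homsp_{\leq s}$) and the continuous-versus-discrete identification of Lemma~\ref{s2s3}\eqref{up}–\eqref{descent} applies on the whole run $T x,\dots,T^n x$, which it does because all these points lie in $\homsp_{>s_3}\subseteq\homsp_{>s_2}$.
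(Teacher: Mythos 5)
Your overall strategy (pass to a single cusp representative via Lemma~\ref{s2s3} and Proposition~\ref{ht_super}, then force geometric decay of the height along the discretized trajectory) is workable, but the key estimate you rest it on is false as stated, and the justification you offer for it is backwards. In the big-cell case the height at time $t$ is $r\,t^{-1}/\bigl((t^{-1}+\tfrac14|X|^2)^2+|Z|^2\bigr)$; replacing $t$ by $r_0t$ divides the numerator $t^{-1}$ by $r_0$, but it also \emph{decreases} the denominator (it does not ``only increase'' it), which works against the decay. Consequently the step ratio $\height(T^{k+1}x)/\height(T^kx)$ is in general strictly larger than $r_0^{-1}$: for $Z=0$ and $t^{-1}=\tfrac14|X|^2$ the ratio equals $4r_0/(1+r_0)^2$, which exceeds $r_0^{-1}$ for every $r_0>1$. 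Your fallback claim ``at worst $\le$ some fixed $c<1$'' is exactly the crux of the lemma, and you leave it unproven. It is in fact true: writing $u=t^{-1}$, $\beta=\tfrac12|X|^2$, $\gamma=\tfrac1{16}|X|^4+|Z|^2$ (so $\beta\le 2\sqrt\gamma$), one computes $\height(x a_{r_0t})/\height(xa_t)=\tfrac{u^2+\beta u+\gamma}{u^2/r_0+\beta u+r_0\gamma}$, and on the decreasing branch $u\le\sqrt\gamma$ this is at most $1-\bigl(\tfrac{r_0-1}{r_0+1}\bigr)^2<1$, a constant depending only on $r_0$; but without this (or an equivalent) computation your argument has a hole precisely at the point you yourself identify as the main obstacle. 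Two minor further points: your explicit bound $n_{\text{max}}\le\lceil\log_{r_0}(s/s_3)\rceil+1$ would have to be relaxed accordingly, and the assertion that the trajectory is ``already on the decreasing branch at $t=1$'' is not guaranteed --- $\height(Tx)<\height(x)$ only forces $t=r_0$ past the peak --- though this is harmless since you start the decay at $Tx$.

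For comparison, the paper's proof avoids per-step decay altogether: it writes the condition $\height(xa_t)>s_3$ as a quadratic inequality in $t^{-1}$ with roots $\lambda_\pm$, deduces $r_0^n<\lambda_-^{-1}$, and bounds $\lambda_-^{-1}$ uniformly using the two inequalities available at the entrance into the slab: $\height(Tx)\le s$ gives $r\le r_0s\bigl[(r_0^{-1}+\tfrac14|X|^2)^2+|Z|^2\bigr]$, while $\height(x)>\height(Tx)$ gives $\tfrac1{16}|X|^4+|Z|^2>r_0^{-1}$, which keeps the relevant denominator away from zero. Either route works; the paper's is a direct bound on the exit time, whereas yours needs the uniform contraction estimate supplied above to be complete.
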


\begin{proof} 
Let $x\in\homsp_{>s}$ be as in the statement of the lemma. Lemma~\ref{s2s3} and Proposition~\ref{ht_super} show that there is a unique $\xi\in\Xi$ and some $g\in G$ such that $\Gamma g = x$ and 
\[
 \height(xa_t) = \left( \frac{\|v_\xi\varrho(ga_t)\|}{\|v_\xi\varrho(\xi)\|}\right)^{-\frac1q}
\]
for $t\in [1,r_0^k]$. We suppose first that $g=\xi n a_r  m\sigma$ for some $n\in N$, $r>0$ and $m\in M$. Then 
$\height(xa_t) = \frac{r}{t}$
for $t\in [1,r_0^k]$. Therefore $ s \geq \height(Tx) = \frac{r}{r_0}$.
This and  $\height(T^kx) = \frac{r}{r_0^k} > s_3$
yield
\[
 k < \frac{\log \frac{r}{s_3} }{\log r_0} \leq \frac{ \log \frac{sr_0}{s_3} }{\log r_0}.
\]

Now we suppose that $g= \xi n a_r  m \sigma (1,Z,X)\sigma$ for some $n, (1,Z,X)\in N$, $r>0$ and $m\in M$. For $t\in [1,r_0^k]$ we have
\[
 \height(xa_t) = r \cdot \frac{t^{-1}}{ \left( t^{-1} + \frac14 |X|^2 \right)^2 + |Z|^2 }.
\]
Then $\height(xa_t) > s_3$ is equivalent to 
\begin{equation}\label{rootcond}
 0 > \left( t^{-1} - \lambda_-\right) \left(t^{-1} - \lambda_+\right)
\end{equation}
where
\[
 \lambda_{\pm} = -\frac12\left(\frac12|X|^2 - \frac{r}{s_3}\right) \pm \sqrt{ \frac14\left(\frac12|X|^2 - \frac{r}{s_3}\right)^2 - \left( \frac{1}{16}|X|^4 + |Z|^2\right) }.
\]
Since $\height(x)>s_3$, \eqref{rootcond} is satisfied at least for $t=1$. Therefore, the roots $\lambda_{\pm}$ are real and 
\[
 \lambda_+ > 1 > \lambda_-.
\]
From $\lambda_+ > 1$ it follows that 
\[
 \frac12\left( \frac{r}{s_3} - \frac12|X|^2\right) > 0.
\]
In turn, $\lambda_->0$. Now $\height(T^kx)>s_3$ implies
\[
 r_0^k < \lambda_-^{-1} = \frac{\frac12\left(\frac{r}{s_3} - \frac12|X|^2\right) + \sqrt{ \frac14\left(\frac{r}{s_3}-\frac12|X|^2\right)^2 - \left(\frac1{16}|X|^4 + |Z|^2\right)}}{\frac1{16}|X|^4 + |Z|^2}.
\]
From $s\geq \height(Tx)$ it follows that 
\[
 r\leq r_0s \left[ \left(r_0^{-1} + \frac14|X|^2\right)^2 + |Z|^2\right].
\]
Therefore
\begin{align*}
\lambda_-^{-1} & \leq \frac{r_0s}{s_3} \cdot \frac{\left(r_0^{-1}+\frac14|X|^2\right)^2 + |Z|^2}{\frac1{16}|X|^4 + |Z|^2}
\\
& = \frac{s}{s_3} \cdot \frac{r_0^{-1}+ \frac12|X|^2}{\frac1{16}|X|^4 + |Z|^2} + \frac{r_0s}{s_3}.
\end{align*}
From $\height(x) > \height(Tx)$, a straightforward deduction yields
\[
 \frac{1}{16}|X|^4 + |Z|^2 > r_0^{-1}.
\]
Hence,
\[
 \frac{r_0^{-1}+ \frac12|X|^2}{\frac1{16}|X|^4 + |Z|^2}
\]
is bounded from above (independent of $x$), and so is $\lambda_-^{-1}$. This completes the proof.
\end{proof}

In the following, for $s'>s>s_3$ we define various numbers which vary with $s$ and $s'$. 

Let $\ell$ denote the maximal number of $T$-steps between $\homsp_{>s}$ and $\homsp_{\leq s_3}$, that is,
\begin{equation}\label{defell}
 \ell \sceq \max\{ k\in\N \mid \exists\, x\in \homsp_{>s}\colon Tx,\ldots, T^kx\in\homsp_{>s_3}\cap \homsp_{\leq s},\ T^{k+1}x \in \homsp_{\leq s_3}\}.
\end{equation}
We note that this maximum exists by Lemma~\ref{maxexists}. It equals the maximal number of $T$-steps between $\homsp_{\leq s_3}$ and  $\homsp_{>s}$ in the sense that 
\[
 \ell \sceq \max\{ k\in\N \mid \exists\, x\in \homsp_{\leq s_3}\colon Tx,\ldots, T^kx\in\homsp_{>s_3}\cap \homsp_{\leq s},\ T^{k+1}x \in \homsp_{>s}\},
\]
which follows since all sets of the form $\homsp_{\leq t}$ or $\homsp_{>t}$ are invariant under $\sigma$ and since $\sigma \tilde a \sigma=\tilde a^{-1}$. 
We note that the maximal amount of time a trajectory can spend continuously within $\homsp_{>s_3}\cap\homsp_{\leq s}$ is then bounded by $2\ell+5$ (corresponding to a trajectory that reaches about height $s$ and then returns to $\homsp_{\leq s_3}$), i.e.\ that
\[
 \max\{k\in\N \mid \exists\, x\colon x, Tx,\ldots, T^kx\in\homsp_{>s_3}\cap \homsp_{\leq s}\}\leq 2\ell+5 
\]
We define $\ell'$ in the same way using $s'$ in place of $s$. 

Let $s > s_3$ and $L\in\N$. Let $\eta$ be a finite partition of $\homsp$ of the form
\[
\eta = \{ \homsp_{>s}, \homsp_{>s_3}\cap \homsp_{\leq s}, P_1,\ldots, P_r\}
\]
with $P_i\subseteq \homsp_{\leq s_3}$ for $i=1,\ldots,r$.
For any $P \in \eta_{0}^{L-1}$ we define
\begin{equation}\label{defVP}
 V_P \sceq \left\{ j \in \{0,\ldots, L-1\} \left\vert\  T^jP \subseteq \homsp_{>s}\right.\right\}.
\end{equation}
For brevity we use the notation 
\[
 [m,n) \sceq \{m,m+1,\ldots, n-1\}
\]
for an interval of integer points with endpoints $m\leq n\in\N$.

An interval $k+[0,K)\subseteq [0,L)$ of a trajectory of a set $P\in\eta_0^{L-1}$ is said to be an \emph{excursion into $\homsp_{>s}$} (of length $K$) if 
\begin{align*}
& T^{k-1}P\subseteq X_{\leq s},\quad T^kP,\ldots, T^{k+K-1}P\subseteq \homsp_{>s},
\\
& \text{and either $T^{k+K}P\subseteq \homsp_{\leq s}$ or $k+K = L$.}
\end{align*}
Clearly, $V_P$ is a disjoint union of intervals which are excursions into $\homsp_{>s}$. 

For the statement of the following lemma we remark that $\homsp_{\leq s}$ is compact by \cite[p.~27]{Dani}. Further we recall that $\lambda$ is the parameter used in the definition of Bowen balls, and that $B_\lambda^G\subseteq D_\kappa$.

\begin{lemma}\label{coverimp}
Let $s'>s>s_3$ and define $\ell,\ell'$ as above. Let $\lambda'\in (0,\lambda]$ be such that $r_0\lambda'$ is an injectivity radius of $\homsp_{\leq s'}$. Suppose that $\eta = \{ \homsp_{>s}, \homsp_{>s_3}\cap \homsp_{\leq s}, P_1,\ldots, P_r\}$ is a finite partition of $\homsp$ such that $\diam T^j(P_i)\leq\lambda'$
for each $j=0,\ldots, 2\ell'+5$ and $i=1,\ldots,r$.
Then for each $L\in \N$ and $P \in \eta_{0}^{L-1}$ with $P\subseteq \homsp_{\leq s_3}$ the set $P$ can be covered by
\[
 c^m e^{h_m(T)\ell m}e^{\frac12 h_m(T) |V_P|}
\]
Bowen $L$-balls. Here the constant $c$ only depends on $G, r_0, s_1,s_2$ and $\lambda$. The constant $m$ (not to be confused with the Haar measure $m$) is the number of excursions of $P$ into $\homsp_{>s'}$.
\end{lemma}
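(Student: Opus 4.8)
The plan is to cover $P$ by $L$-boxes, then convert each $L$-box into $O(1)$ Bowen $L$-balls, so it suffices to bound the number of $L$-boxes needed. Write $V_P = \bigsqcup_{i} (k_i + [0,K_i))$ as a disjoint union of excursions into $\homsp_{>s}$, and let $m$ be the number of those excursions which actually reach $\homsp_{>s'}$. Since $P\subseteq\homsp_{\leq s_3}$, along the complement $[0,L)\setminus V_P$ the trajectory stays in $\homsp_{\leq s}$, and for the intervals of $[0,L)\setminus V_P$ which lie inside $\homsp_{>s_3}$ (i.e.\ the ascents toward and descents from an $s$-excursion, together with the genuinely low pieces in $\homsp_{\leq s_3}$) I will use that, by Lemma~\ref{s2s3}, such a sub-trajectory can only spend boundedly many (at most $2\ell+5$) consecutive steps inside $\homsp_{>s_3}\cap\homsp_{\leq s}$. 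On the pieces contained in $\homsp_{\leq s_3}$ we use the partition elements $P_1,\dots,P_r$, whose iterates have diameter $\leq\lambda'$, so these contribute no multiplicative cost; on the short $\homsp_{>s_3}\cap\homsp_{\leq s}$ pieces the bounded time gives a bounded (hence constant per occurrence) cost, which I fold into $c$.

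\textbf{Counting along an excursion.} The heart of the argument is a single excursion $k+[0,K)$ into $\homsp_{>s}$. Fix a center $x$ for the current $L$-box and a perturbation $h = \sigma(1,Z,X)\sigma\, n a_r m \in D_\kappa^U D_\kappa^{NAM}$ representing another point $xh$ of the box. By Lemma~\ref{s2s3}\eqref{descent}, once the discretized orbit of $x$ (or of $xh$) starts descending inside the cusp it descends all the way below $s_3$, so within the excursion the height profile of $x$ is unimodal: it climbs to a maximum at some step $S_0$ and then falls; similarly for $xh$. I apply Proposition~\ref{Vbound2} with $S$ equal to (essentially) the step at which the common maximal height is reached: as long as $x$ and $xh$ both stay above $s_2$ for $j=0,\dots,S$ and both are still ascending, we get $|X|\leq c_3 r_0^{-S/4}$, $|Z|\leq c_4 r_0^{-S/2}$. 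Thus the set of admissible unstable components of $h$ has volume $O\!\left(r_0^{-S/4\cdot p_1}\cdot r_0^{-S/2\cdot p_2}\right) = O\!\left(r_0^{-S(p_1/2+p_2)/2}\right) = O\!\left(e^{-\frac12 h_m(T) S}\right)$ by Proposition~\ref{maxentropy}. Covering the full $D_\kappa^U$ (whose unstable directions carry the expanding coordinates of $T$) by translates of this small admissible set, and noting $D_\kappa^{NAM}$ contributes a bounded number of $L$-boxes in the remaining (central and stable) directions, one needs $O\!\left(e^{\frac12 h_m(T) S}\right)$ $L$-boxes to account for all of $P$ over the first $S$ steps of this excursion. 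After the maximum, the orbit descends monotonically (Lemma~\ref{s2s3}\eqref{descent}): the map $T$ now contracts the relevant $U$-directions, so the remaining $K - S$ steps of the excursion cost only an additional bounded factor. Hence each excursion of length $K$ that climbs to height in $[s,s']$ contributes a factor $e^{\frac12 h_m(T) K}\cdot O(1)$ (using $S\leq K$ and that the descent is free), and each excursion that climbs past $s'$ contributes $e^{\frac12 h_m(T) K}$ times a factor absorbed into $c^m$ — here the extra freedom to use $\lambda'$-fine iterates on the higher part $\homsp_{>s_3}\cap\homsp_{\leq s'}$ is what keeps the per-$m$ cost a constant rather than growing with $K$.

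\textbf{Assembling the estimate.} Multiplying the per-excursion bounds and using $\sum_i K_i = |V_P|$ gives a factor $e^{\frac12 h_m(T)|V_P|}$ from the excursions themselves. The transitions between $\homsp_{\leq s_3}$ and $\homsp_{>s}$ — there are at most $2|V_P|/1$ of them, but more usefully at most $O(1)$ per excursion — each take at most $\ell$ steps inside $\homsp_{>s_3}\cap\homsp_{\leq s}$, during which the best available bound on the perturbation degrades by a bounded factor $e^{h_m(T)\ell}$ per transition; since the number of transitions is $O(m)$ plus the (bounded-length, hence constant-cost) purely-low excursions, these contribute $e^{h_m(T)\ell m}$ up to a further $c^m$. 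Collecting everything yields the claimed bound $c^m e^{h_m(T)\ell m} e^{\frac12 h_m(T)|V_P|}$. Finally, each $L$-box $x\tilde a^{L-1}D_\kappa^U\tilde a^{-(L-1)}D_\kappa^{NAM}$ is contained in a bounded number of Bowen $L$-balls $xB_L$ because $\tilde a^{L-1}D_\kappa^U\tilde a^{-(L-1)}D_\kappa^{NAM}\subseteq B_L$ and $D_\kappa\subseteq B_\lambda^G$, so replacing boxes by Bowen balls only changes the constant $c$.

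\textbf{Main obstacle.} The delicate point is bookkeeping the interface between an excursion into $\homsp_{>s}$ and the surrounding region: one must simultaneously (a) invoke Proposition~\ref{Vbound2} only on maximal common ascending stretches above $s_2$, which forces a careful choice of the parameter $S$ and a unimodality argument from Lemma~\ref{s2s3}\eqref{descent}; (b) verify that the descending part of the excursion and the bounded-length passages through $\homsp_{>s_3}\cap\homsp_{\leq s}$ genuinely contribute only multiplicative constants, so that nothing beyond $e^{\frac12 h_m(T)|V_P|}$, $e^{h_m(T)\ell m}$, and $c^m$ appears; and (c) use the finer partition on $\homsp_{>s_3}\cap\homsp_{\leq s'}$ (available because $\diam T^j P_i\leq\lambda'$ for $j\leq 2\ell'+5$) to prevent the higher excursions from contributing a $K$-dependent factor, which is exactly why the statement separates $m$ (excursions past $s'$) from the bulk term $|V_P|$.
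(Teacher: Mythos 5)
Your skeleton (decompose $V_P$ into excursions, get the rate $\tfrac12 h_m(T)$ per excursion from Proposition~\ref{Vbound2}, use the $\lambda'$-fine partition for the low parts, pass from boxes to Bowen balls) matches the paper, but two steps as you argue them would fail. First, the per-excursion count: you apply Proposition~\ref{Vbound2} only up to the time $S_0$ of maximal height and then claim the descent is free because ``$T$ now contracts the relevant $U$-directions''. It does not: $T$ is right multiplication by $\tilde a$ with $\alpha(\tilde a)=r_0>1$ and expands $U$ at every point of $\homsp$, whether the orbit is ascending or descending in the cusp. To continue the induction you need boxes at the scale of the elapsed time, so from a constraint only at scale $S_0$ a length-$K$ excursion costs about $e^{h_m(T)(K-S_0/2)}$ boxes, which for $S_0\approx K/2$ is $e^{\frac34 h_m(T)K}$, exceeding the claimed $e^{\frac12 h_m(T)K}$. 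The unimodality detour is unnecessary: the hypotheses of Proposition~\ref{Vbound2} only compare the initial and final heights, and the paper applies it over the \emph{whole} excursion into $\homsp_{>s_3}$ (with $x'=T^{n_j-1}x\in\homsp_{\leq s_3}\cap\homsp_{>s_2}$ and $T^{h_j}x'\in\homsp_{>s_3}$, so $\height(T^{h_j}x')>\height(x')$ automatically), which yields $|X|\leq c_3r_0^{-h_j/4}$, $|Z|\leq c_4r_0^{-h_j/2}$ at the full length $h_j$ and hence the covering count $\asymp e^{\frac12 h_m(T)h_j}$ including the descent.

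Second, the bookkeeping of constant factors. You charge an $O(1)$ factor per passage through $\homsp_{>s_3}\cap\homsp_{\leq s}$ and per excursion that reaches $s$ but not $s'$, and fold these into $c^m$. But $m$ counts only excursions past $s'$; the number of excursions into $\homsp_{>s}$ that stay below $s'$ is not controlled by $m$ (it can grow linearly in $L$), so those factors cannot be absorbed into $c^m$ and your bound degenerates to $c^{m_1}$ with $m_1$ the number of $s$-excursions. This destroys the point of the two heights: in Proposition~\ref{commonpartition} one has $m\leq L/\ell''$ with $\ell''\to\infty$ as $s'\to\infty$, whereas a per-$s$-excursion constant would contribute a rate comparable to $h_m(T)\ell/\ell'''\approx h_m(T)$ that does not vanish. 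The paper gets the sub-$s'$ excursions entirely for free: on each interval $W_j$ of the complement of $\bar V$ the trajectory stays in $\homsp_{\leq s'}$, every return to $\homsp_{\leq s_3}$ lands in some $P_i$ with $\diam T^j(P_i)\leq\lambda'$ for $j\leq 2\ell'+5$ (note the fineness is of the \emph{low} partition elements, not of sets in $\homsp_{>s_3}\cap\homsp_{\leq s'}$ as you write), and an induction using the injectivity radius on $\homsp_{\leq s'}$ keeps $T^{l_j}(P)$ inside a single small Bowen $L_j$-ball, so no multiplicative factor at all is incurred there; the only multiplicative events are the $m$ excursions meeting $\homsp_{>s'}$, each costing $c\,e^{\frac12 h_m(T)|\bar V_j|}\leq c\,e^{h_m(T)\ell}e^{\frac12 h_m(T)|V_i|}$.
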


We note that while the partition $\eta$ is (in a strong way) adapted to the heights $s,s'$,  our definition of Bowen $L$-ball does not depend on $s,s'$.

\begin{proof}
Let $P \in \eta_{0}^{L-1}$ with $P\subseteq \homsp_{\leq s_3}$.  We decompose $V_P$ into a disjoint union
of excursions into~$\homsp_{>s}$. We denote those intervals that contain excursions
into~$\homsp_{>s'}$ by~$V_j=[k_j,k_j+K_j)$ and their union by 
\begin{equation}\label{decompV}
 V = \bigcup_{j=1}^{m} V_j 
 = [k_1,k_1+K_1) \cup \ldots \cup [k_{m}, k_{m} + K_{m})\subset V_P.
\end{equation}
We may suppose that $k_1<k_2 < \cdots < k_{m}$. 
We note that each of those excursion $V_j$ is contained in an excursion 
$\tilde V_j=[n_j,n_j+h_j)\subseteq [k_j-\ell,k_j+K_i+\ell)$ into $\homsp_{>s_3}$. 
We define $\tilde V=\bigcup_{j=1}^{m}\tilde V_{j}$ (which is disjoint union). 
Analogously, we decompose $W \sceq [0,L)\smallsetminus \tilde V$ into a disjoint union 
\[
 W = \bigcup_{j=1}^{m+1} W_j
\]
where each $W_j$ is a maximal subset of $W$ of the form $[l_j, l_j+L_j)$ with $0=l_1<l_2<\cdots < l_{m+1}$. The set $W_{m+1}$ might be empty.
It follows that~$[0,L)$ is the disjoint union of~$W_1,\tilde V_1,\ldots,\tilde V_m,W_{m+1}$ in that order.

\begin{figure}[h]
\begin{center}
\includegraphics*{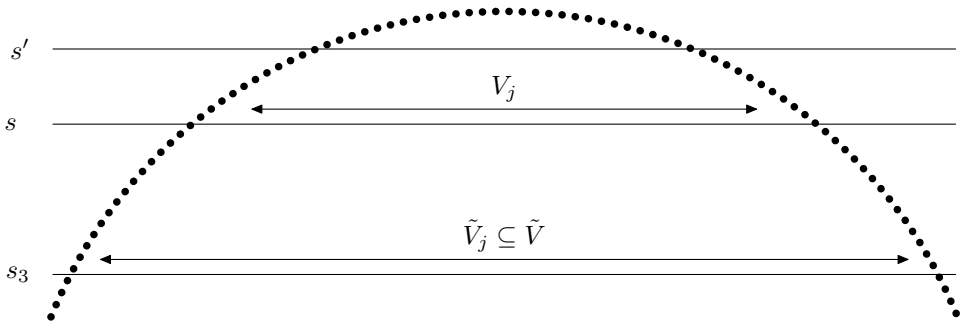} 
\end{center}
\caption{An excursion~$V_j$ into~$\homsp_{>s}$ containing an excursion into~$\homsp_{>s'}$
and contained in the excursion $\tilde V_j$ into~$\homsp_{>s_3}$.}
\end{figure}

For $R$ running iteratively through $|W_1|$, $|W_1\cup \tilde V_1|$, $\ldots$, $|W_1\cup \tilde V_1 \cup \ldots \cup W_m \cup \tilde V_m|$, $|W_1\cup \tilde V_1 \cup \ldots \cup W_m\cup \tilde V_m \cup W_{m+1}|$, we now show that the number of $R$-boxes with center in $P$ needed to cover $P$ is bounded by 
\[
 c^{j-1} e^{\frac12 h_m(T)|\tilde V_1\cup\ldots \cup \tilde V_{j-1}|} 
\]
for $R \in \{ |W_1 \cup \tilde V_1 \ldots \cup W_j|, |W_1 \cup \tilde V_1 \ldots \cup W_{j-1} \cup \tilde V_{j-1}| \}$, $j=1,\ldots, m+1$, where $c$ is a constant depending only on $G, \tilde a,s_1,s_2$ and $\lambda$, but not on $s$ or $s'$. We determine $c$ in \eqref{finalc} below.

\textit{The step corresponding to adding $W_j$:} 
Note first that even though an interval $W_j=[l_j, l_j+L_j)$ may contain one or more excursions  into $\homsp_{>s}$, it does not contain an excursion into $\homsp_{>s'}$. Each of the excursions into $\homsp_{>s}$ is of length at most $2\ell'+5$.

Suppose now that we have already found (at most)
\begin{equation}\label{njeq}
 c^{j-1} e^{\frac12 h_m(T) |\tilde V_1\cup\cdots\cup\tilde V_{j-1}|}
\end{equation}
Bowen $l_j$-balls with center in $P$ whose union contains the given element $P\in\eta_0^{L-1}$.
If $j=1$, and hence $l_j=0$, we may define a Bowen $0$-ball as a Bowen $1$-ball and the claim on the  covering number is trivial. We claim that $T^{l_j}(P)$, $\ldots$, $T^{l_j+L_j-1}(P)$ being contained in $\homsp_{\leq s'}$ implies that $T^{l_j}P$ is so small such that if $P$ is covered by the Bowen $l_j{\color{green}}$-balls with centers $x_1,\ldots, x_r \in P$, then $P$ is already covered by the smaller Bowen $l_j+L_j$-balls with these centers.

The choice of $\lambda'$, $\ell'$ and $\eta$ implies that any element $Q\in\eta$ with $Q\subseteq \homsp_{\leq s_3}$ is contained in 
\begin{equation}\label{smallball}
 w \bigcap_{i=0}^k \tilde a^i B_{\lambda'}^G \tilde a^{-i}
\end{equation}
for $k\leq 2\ell'+5$ and for any $w\in Q$ as long as $Q, T(Q), \ldots, T^k(Q) \subseteq \homsp_{\leq s'}$. A simple induction shows that $Q\in\eta_0^{L'-1}$, $Q\subseteq \homsp_{\leq s_3}$, belongs to the ``small'' Bowen $L'$-ball \eqref{smallball} (defined with the radius~$\lambda'$)
with $k=L'-1$ if it is known that $Q,T(Q),\ldots,T^{L'-1}(Q)\subseteq \homsp_{\leq s'}$. This applies to the partition element $Q$ containing $T^{l_j}(P)$ and $L'=L_j$, and shows that $T^{l_j}(P)$ is contained in 
\begin{equation}\label{actualcover}
 w \bigcap_{i=0}^{L_j-1} \tilde a^i B_{\lambda'}^G \tilde a^{-i}
\end{equation}
for any $w\in T^{l_j}(P)$. Let $xB_{l_j}$ be one of the Bowen balls
used in the cover associated to \eqref{njeq}. Set $w \sceq x\tilde a^{l_j}$. From $x\tilde a^{l_j} \in \homsp_{\leq s_3}$ and the choice of $\lambda'\leq\lambda$ it follows that
\begin{align*}
P  \cap xB_{l_j}
 &\subseteq \Big((x \tilde a^{l_j})  \bigcap_{i=0}^{L_j-1} \tilde a^i B_{\lambda'}^G \tilde a^{-i} \cap    (x\tilde a^{l_j}) \tilde a^{-l_j}B_{l_j}\tilde a^{l_j}\Big) \tilde a^{-l_j}
\\
&\subseteq (x \tilde a^{l_j})\Big(  \bigcap_{i=0}^{L_j-1} \tilde a^i B_{\lambda'}^G \tilde a^{-i} \cap   \tilde a^{-l_j}B_{l_j}\tilde a^{l_j}\Big) \tilde a^{-l_j}\subseteq xB_{l_j+L_j}.
\end{align*}
Note that $n_j=l_j+L_j$, which is the starting point of $\tilde V_j$ for $j\leq m$. Thus, $P$ can be covered with 
the same number of Bowen $n_j$-balls with center in $P$ as with Bowen~$l_j$-balls, so that \eqref{njeq} is still an upper bound
for the necessary number.

\textit{The step corresponding to adding $\tilde V_{j}$:}
Suppose now that we have already found \eqref{njeq}-many Bowen $n_j$-balls with center in $P$ whose union covers $P$. 
Let $xB_{n_j}$ be any Bowen $n_j$-ball used in this covering. Define
\[
 E \sceq \left\{ y \in xB_{n_j} \left\vert\ T^{n_j-1}y\in \homsp_{\leq s_3}, T^{n_j}y,\ldots, T^{n_j+h_j-1}y \in \homsp_{>s_3}\right.\right\}.
\]
Then $P\cap xB_{n_j} \subseteq E$. If $y\in E$, then there exists $g\in B_{n_j}$ with $y=xg$.
This implies $h\sceq \tilde a^{-n_j+1}g\tilde a^{n_j-1}=\sigma(1,Z,X)\sigma g'\in B_\lambda^G$
for some bounded (with the bound only depending on $G$ and $\lambda$)~$g'\in NAM$ and~$\sigma (1,Z,X)\sigma\in U$.
Set $x'\sceq T^{n_j-1}x$ and $y'\sceq T^{n_j-1}y$ so that $y'=x'h$.
Moreover, we have
\begin{align*}
& Tx',\ldots, T^{h_j}x'\in \homsp_{>s_3},\quad x'\in\homsp_{\leq s_3}\cap \homsp_{>s_2},
\\
& Ty',\ldots, T^{h_j}y'\in \homsp_{>s_3},\quad y'\in\homsp_{\leq s_3}\cap \homsp_{>s_2}.
\end{align*}
Applying Proposition~\ref{Vbound2} to $x',y'$ shows that 
\begin{equation}\label{main-bound-XZ}
 |Z| \leq c_4 r_0^{-h_j/2}\quad\text{and}\quad |X| \leq c_3 r_0^{-h_j/4}.
\end{equation}

We claim that this bound implies that we can cover~$P\cap xB_{n_j}$ with~$\leq c r_0^{(\frac{p_1}4+\frac{p_2}2)h_j}$
many Bowen~$l_{j+1}$-boxes with centers in~$P$ (recall that~$l_{j+1}=n_j+h_j$).
Here~$c$ is some constant that does not depend on~$s,s'$. 

To see that notice first that~\eqref{main-bound-XZ} implies that~$\tilde a^{-\lceil h_j/2\rceil}h \tilde a^{\lceil h_j/2\rceil}$
is still bounded uniformly in $y$ and $j$. More precisely we may accomodate the constants~$c_3,c_4$ by finding some
absolute integer~$b>0$ (depending on~$r_0,c_3,c_4$ and~$\lambda$) such that~\eqref{main-bound-XZ}
implies that~$\tilde a^{-\lceil h_j/2\rceil+b}\sigma (1,Z,X) \sigma \tilde a^{\lceil h_j/2\rceil-b}\in B_{\lambda}^G$.
Note that we also have
$$
 g',\tilde a^{-\lceil h_j/2\rceil+b} g'\tilde a^{\lceil h_j/2\rceil-b}\in B_{c'\lambda}^G
$$ 
for some constant~$c'>0$
that only depends on the choice of the Riemannian metric on~$G$ and $b$. Together we
see that $g\in B_{n_j}$ actually belongs to the Bowen~$(n_j+\lceil h_j/2\rceil-b)$-ball $B'$
defined by the radius~$(1+c')\lambda$. Define~$B''$ to be the Bowen $l_{j+1}$-ball
defined by the radius~$\lambda/2$. Let now~$y_1,\ldots,y_q\in P\cap x B'$
be a maximal collection of points for which the sets~$y_1B'',\ldots,y_q B''$ are pairwise disjoint.
By definition we have~$y_1B''\cup\cdots\cup y_q B''\subseteq xB'B''$. Also note that~$B'B''\subseteq B^{(3)}$,
where~$B^{(3)}$ is the Bowen~$(n_j+\lceil h_j/2\rceil-b)$-ball defined by the radius~$(3/2+c')\lambda$.
With the lower and upper bounds for the Haar measure of a Bowen~$n$-ball of the form~$ce^{-h_m(T)n}$ from Lemma~\ref{massBowen} for two values of $c$ (which depend on the radius used) we obtain
\begin{equation}\label{finalc}
 q\leq \frac{m(B^{(3)})}{m(B'')}\leq c''e^{-h_m(T)(n_j+\lceil h_j/2\rceil-b)+h_m(T)l_{j+1}}.
\end{equation}
Finally note that by maximality of the collection~$y_1,\ldots,y_q$
it follows that~$P\cap xB$ is covered by~$y_1B_{l_{j+1}},\ldots,y_qB_{l_{j+1}}$,
which gives the claim (by recalling that~$l_{j+1}=n_j+h_j$).

Finally note that the bound \eqref{njeq} for $j=m$ (if $W_{m+1}=\emptyset$) or 
for $j=m+1$ gives the conclusion of the proposition since $|\tilde V_j| \le |V_j|+2\ell$. 
\end{proof}

\begin{prop}\label{commonpartition}
For all $s > s_3$ there exists a finite partition $\eta = \{\homsp_{>s}, \homsp_{>s_3}\cap \homsp_{\leq s}, P_1,\ldots, P_r\}$ of $\homsp$ such that for each $T$-invariant probability measure $\mu$ on $\homsp$ we have
\[
 h_\mu(T) \leq  h_\mu(T,\eta) + \tfrac1s + \tfrac12 h_m(T) \big( 1-\mu(\homsp_{\leq s})\big).
\]
\end{prop}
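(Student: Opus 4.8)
The plan is to feed the Bowen‑ball count of Lemma~\ref{coverimp} into the Kolmogorov--Sinai formula for $h_\mu(T)$, after choosing the auxiliary height $s'$ in that lemma large enough that its ``excursion‑counting'' factor becomes negligible against the target error $\tfrac1s$. I would first fix $s>s_3$ and set $C_0\sceq\log c+h_m(T)\ell$, with $c,\ell$ the constants of Lemma~\ref{coverimp} (we may take $c\ge1$, so $C_0\ge0$); note $C_0$ depends only on $s,G,T,s_3$. Choose $s'>s$ so large that $\log_{r_0}(s'/s)-1\ge sC_0$, e.g.\ $s'\sceq s\,r_0^{1+\lceil sC_0\rceil}$; with $s'$ (hence $\ell',\lambda'$) fixed, use compactness of $\homsp_{\leq s_3}$ and continuity of $T$ to choose a finite partition $\{P_1,\dots,P_r\}$ of $\homsp_{\leq s_3}$ with $\diam T^j(P_i)\le\lambda'$ for all $i$ and $j=1,\dots,2\ell'+5$, and put $\eta\sceq\{\homsp_{>s},\homsp_{>s_3}\cap\homsp_{\leq s},P_1,\dots,P_r\}$. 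Then $\eta$ meets the hypotheses of Lemma~\ref{coverimp} and depends only on $s$. The key point is this: by Lemma~\ref{value} and Proposition~\ref{ht_super} the height changes by a factor in $[r_0^{-1},r_0]$ at each $T$‑step of a geodesic arc inside $\homsp_{>s_3}$, and is there a unimodal function of time; hence for $P\in\eta_{0}^L$ each excursion of $P$ into $\homsp_{>s'}$ sits inside a distinct excursion of $P$ into $\homsp_{>s}$ of length $\ge\log_{r_0}(s'/s)-1\ge sC_0$, and as these are disjoint subintervals of $V_P$ the number $m_P$ of excursions of $P$ into $\homsp_{>s'}$ obeys $m_P\le|V_P|/(sC_0)$, i.e.\ $C_0m_P\le|V_P|/s$.

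Now let $\mu$ be any $T$‑invariant probability measure (ergodicity is not needed). Take an increasing sequence of finite partitions $\zeta_n$ refining $\eta$, of mesh $\to0$ on every fixed compact subset of $\homsp$ and generating the Borel $\sigma$‑algebra modulo $\mu$ (possible since $\mu(\homsp_{>M})\to0$), so $h_\mu(T,\zeta_n)\uparrow h_\mu(T)$ by Kolmogorov--Sinai. Fix $n$ and, for $P\in\eta_{0}^L$, let $N(P)$ be the number of atoms of $(\zeta_n)_{0}^L$ meeting $P$; from $H_\mu((\zeta_n)_{0}^L)=H_\mu(\eta_{0}^L)+H_\mu((\zeta_n)_{0}^L\mid\eta_{0}^L)$ and $H_\mu((\zeta_n)_{0}^L\mid\eta_{0}^L)\le\sum_P\mu(P)\log N(P)$,
\[
 h_\mu(T,\zeta_n)\ \le\ h_\mu(T,\eta)+\limsup_{L\to\infty}\frac1L\sum_{P\in\eta_{0}^L}\mu(P)\log N(P).
\]
For $P\subseteq\homsp_{\leq s_3}$, Lemma~\ref{coverimp} covers $P$ by $c^{m_P}e^{h_m(T)\ell m_P}e^{\frac12 h_m(T)|V_P|}$ Bowen $L$‑balls, and since $\zeta_n$ is fine on $\homsp_{\leq s_3}$ each Bowen $L$‑ball meets at most a bounded number $C_1(\zeta_n)$ of atoms of $(\zeta_n)_{0}^L$; hence $\log N(P)\le\log C_1(\zeta_n)+C_0m_P+\tfrac12 h_m(T)|V_P|\le\log C_1(\zeta_n)+\bigl(\tfrac1s+\tfrac12 h_m(T)\bigr)|V_P|$. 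For $P$ not contained in $\homsp_{\leq s_3}$ the same holds up to a bounded correction: translate past the first time $j_0(P)$ the orbit of $P$ enters $\homsp_{\leq s_3}$ and bound the initial cuspidal segment crudely by $(\#\zeta_n)^{j_0(P)}$; for $T_0(\varepsilon)$ large the atoms with $j_0(P)>T_0(\varepsilon)$ have total $\mu$‑measure $<\varepsilon$, because as $T_0\to\infty$ this measure tends to that of the set of points whose forward orbit stays in $\homsp_{>s_3}$, which is $0$ (such points lie on forward‑vertical geodesics, where $\height(T^j\cdot)\to\infty$, and these carry no invariant mass by Poincar\'e recurrence), so they contribute $\le\varepsilon\log\#\zeta_n$ to the $\limsup$, while for $j_0(P)\le T_0(\varepsilon)$ the correction is a fixed, $L$‑independent constant.

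Combining this with the \emph{exact} identity $\tfrac1L\sum_{P\in\eta_{0}^L}\mu(P)|V_P|=\mu(\homsp_{>s})$ — valid for every $L$ because $\homsp_{>s}$ is an atom of $\eta$, so $|V_P|=\#\{0\le j<L:T^jx\in\homsp_{>s}\}$ for $x\in P$, and $\mu$ is $T$‑invariant — and letting $\varepsilon\to0$ gives
\[
 \limsup_{L\to\infty}\frac1L\sum_{P\in\eta_{0}^L}\mu(P)\log N(P)\ \le\ \Bigl(\tfrac1s+\tfrac12 h_m(T)\Bigr)\mu(\homsp_{>s}).
\]
As the right side does not depend on $n$, letting $n\to\infty$ in the previous display yields $h_\mu(T)\le h_\mu(T,\eta)+(\tfrac1s+\tfrac12 h_m(T))\mu(\homsp_{>s})$, which together with $\mu(\homsp_{>s})\le1$ is exactly $h_\mu(T)\le h_\mu(T,\eta)+\tfrac1s+\tfrac12 h_m(T)\bigl(1-\mu(\homsp_{\leq s})\bigr)$.

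The main obstacle is the very first step: the factor $c^{m_P}e^{h_m(T)\ell m_P}$ in Lemma~\ref{coverimp} is a priori as large as $e^{\mathrm{const}\cdot|V_P|}$ and would swamp the error $\tfrac1s$; the way out is that $m_P$, the number of excursions above the \emph{higher} threshold $s'$, is not comparable to the total cusp time $|V_P|$ but only to $|V_P|/\log_{r_0}(s'/s)$, since a single excursion reaching above $s'$ must already last $\asymp\log_{r_0}(s'/s)$ steps — whence $s'$ can be chosen (as a function of $s$) to dilute this factor below $\tfrac1s$. The remaining ingredients — the bounded covering multiplicity of Bowen balls by a fine enough $\zeta_n$, and the handling of atoms starting high in a cusp — are routine by comparison.
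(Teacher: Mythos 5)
Your overall architecture is sound in its dilution mechanism: choosing $s'=s\,r_0^{1+\lceil sC_0\rceil}$ so that every excursion above $s'$ forces a (distinct, by unimodality) excursion above $s$ of length at least $sC_0$, hence $C_0m_P\le |V_P|/s$, and then combining Lemma~\ref{coverimp} with the exact identity $\frac1L\sum_P\mu(P)|V_P|=\mu(\homsp_{>s})$ is essentially the same bookkeeping the paper performs (there phrased as $m\le L/\ell''$ with $\ell''$ the minimal number of steps from $\homsp_{\le s_3}$ to above $s'$, and $s'$ chosen so that $\ell/\ell''<\frac1s$). The genuine gap is the bridge from Bowen-ball covers to partition counting: you assert that, ``since $\zeta_n$ is fine on $\homsp_{\le s_3}$'', each Bowen $L$-ball meets at most a bounded number $C_1(\zeta_n)$ of atoms of $(\zeta_n)_0^L$, uniformly in $L$. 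This is not justified, and fineness works against you rather than for you. The relevant objects are atoms of the dynamical join $(\zeta_n)_0^L=\bigvee_{j<L}T^{-j}\zeta_n$, whose boundaries are the iterated preimages $T^{-j}\partial\zeta_n$; a Bowen $L$-ball has fixed radius $\lambda$ in the non-expanded ($NAM$) directions, where $\zeta_n$ has mesh much smaller than $\lambda$, so the number of $\zeta_n$-itineraries realized inside one Bowen ball is a priori only bounded by $\prod_{j<L}\#\{\zeta_n\text{-atoms within }\lambda\text{ of }T^jx\}$, i.e.\ exponentially in $L$; sliver atoms of the join accumulating along these preimage hypersurfaces can indeed make the count grow with $L$ for a general finite partition. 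Controlling exactly this multiplicity is the content of Brin--Katok/Katok-type entropy estimates; it is why the paper invokes \cite{Brin_Katok} in the form of Lemma~B.2 of \cite{ELMV} (plus the remark that, for the group-theoretic Bowen balls, a fixed radius suffices by homogeneity, after an ergodic decomposition), and why the only partition-versus-Bowen-ball comparison ever proved in the paper is the \emph{reverse} containment $[x]_{\mc P_0^{N-1}}\subseteq xB_N$ for a specially constructed countable partition (Section~\ref{modifications}), which does not give your multiplicity bound. As written, your step would establish an upper local-entropy bound at a \emph{fixed} scale $\lambda$ for arbitrary finite partitions, which is false in general; you must either construct $\zeta_n$ with a proved uniform-in-$L$ multiplicity property (a nontrivial task, essentially new machinery) or route the argument through \cite{Brin_Katok}/\cite{ELMV} as the paper does.

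A minor point: the factor-$r_0$ per-step growth bound on the height should not be quoted only ``inside $\homsp_{>s_3}$'', since the step launching an excursion starts at height $\le s$, possibly deep in the compact part; this is harmless, because Lemma~\ref{value} applied to every representative in either $\xi$-Bruhat cell gives the bound globally (or one argues as in Lemma~\ref{numberpartition}), but it should be said.
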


\begin{proof}
Using the ergodic decomposition of $T$-invariant measures we may restrict to ergodic $T$-invariant measures. Also note that every $T$-orbit visits $\homsp_{\leq s_3}$ (Lemma~\ref{s2s3}\eqref{descent}, which also holds for $T^{-1}$ in place of $T$), so that we have 
\begin{equation*}
\delta_1\sceq\mu(\homsp_{\leq s_3})> 0.
\end{equation*}
  Let $s'>s$. Define $\ell$ as in \eqref{defell} and let
\[
 \ell''\sceq \min\left\{ k\in\N\left\vert\ \exists\, x\in \homsp_{\leq s_3}\colon T^{k+1}x\in \homsp_{>s'} \right.\right\}.
\]
Let $\eta = \{\homsp_{>s}, \homsp_{>s_3} \cap \homsp_{\leq s}, P_1,\ldots, P_r\}$ be as in Lemma~\ref{coverimp}. 

We will show the proposition using \cite{Brin_Katok}, more precisely in the form of Lemma~B.2 in \cite{ELMV}: There it is shown that for any small $\delta > 0$, the entropy of $\mu$ is the limit as $\lambda\to 0$
of
\[
 \liminf_{L\to\infty}\frac{\log N_\lambda(\delta,L)}L,
\]
where $N_\lambda(\delta,L)$ is the minimal number of Bowen $L$-balls, with $\lambda$ being the parameter used in the definition, that are needed to cover some set of $\mu$-measure $\delta$. However, there is one important difference between our definition of Bowen $L$-balls and that of \cite{Brin_Katok}. In the latter, one takes the intersections of pre-images of $\lambda$-balls within $\homsp$. In our definition of Bowen $L$-ball we took the intersection in the group, which results in general in a smaller set (namely in those cases where the orbit ventures near the cusp). As we are seeking an upper bound of entropy, we may use \cite{Brin_Katok} also together with our definition of Bowen $L$-balls. This has the advantage that we do not have to take the limit as $\lambda\to 0$. Within the group a bounded number of translates\footnote{Let~$\lambda<\lambda'$.
If~$g_1B_L(\lambda/2),\ldots,g_bB_L(\lambda/2)$ is a maximal collection of disjoint left translates for some~$g_1,\ldots,g_b\in B_L(\lambda')$,
then~$b\leq\frac{m\big(B_L(\lambda'+\lambda/2)\big)}{m\big(B_L(\lambda/2)\big)}$ is bounded 
independently (see Lemma~\ref{massBowen}) of~$L$ and~$B_L(\lambda')\subseteq g_1B_L(\lambda)\cup\cdots\cup g_bB_L(\lambda)$.} 
of Bowen $L$-balls defined by $\lambda>0$ can be used to cover a Bowen $L$-ball defined by $\lambda'>0$, and as $L\to\infty$ this difference becomes unimportant.

By ergodicity $\bigcup_{j=0}^\infty T^{-j}\homsp_{\leq s_3}$ has full measure. Hence there exists $M$ with 
\[
 \mu\bigg(\bigcup_{j=0}^{M-1} T^{-j}\homsp_{\leq s_3} \bigg)>1-\frac{\delta_1}2.
\]
The intersection of the preimage of this set under $T^{L'}$ with $\homsp_{\leq s_3}$ has measure at least $\delta_1/2$. It follows that there are infinitely many $L$ (of the form $L'+j$ for some $j\in [0,M)$) for which 
\[
 \mu\big(\homsp_{\leq s_3}\cap T^{-L}\homsp_{\leq s_3}\big)>\frac{\delta_1}{2M}.
\]
We now proceed making $Y_L\sceq\homsp_{\leq s_3}\cap T^{-L}\homsp_{\leq s_3}$ smaller, taking care that the resulting sets have measures that do not approach zero, and obtaining more information on the smaller sets. 

Since $\mu$ is ergodic, the value $h_\mu(T,\eta)$ has the following interpretation: for every $\eps>0$ and every sufficiently large $L$ there exists a set $Z_{L,\eps}$ such that its measure is bigger than $1-\eps$ and $Z_{L,\eps}$ can be covered with $e^{(h_\mu(T,\eta)+\eps)L}$ elements of $\eta_0^{L-1}$. We choose $\eps=\min(\frac1{3s}, \frac{\delta_1}{4M})$, and take the intersection $Y_L'=Y_L\cap Z_{L,\eps}$. We now know that $\mu(Y_L')>\frac{\delta_1}{4M}$ and that $Y_L'$ can be covered with $e^{(h_\mu(T,\eta)+\frac1{3s})L}$ elements of $\eta_0^{L-1}$. 

Finally, we may make $Y_L'$ again a bit smaller to ensure that the ergodic averages for the characteristic function $\homsp_{>s}$ are correct up to an error of $(3sh_m(T))^{-1}$ and for sufficiently large $L$. More precisely there exists a subset $Y_L''\subset Y_L'$ (obtained by intersecting $Y_L'$ with a set of near full measure) with $\mu(Y_L'')>\delta=\frac{\delta_1}{5M}$ and some $L_0$ such that for all $L>L_0$ and all $x\in Y''_L$ we have
\[
 \left|\frac1L\sum_{i=0}^{L-1}\chi_{\homsp_{>s}}(T^ix)-\mu(\homsp_{>s})\right|<\frac1{3sh_m(T)}.
\]
In the following we assume~$L>L_0$.
Now apply Lemma \ref{coverimp} to each of the partition elements of $\eta_0^{L-1}$ obtained earlier. Notice that for each of the partition elements we have $m\leq \frac{L}{\ell''}$. Finally, notice that the above ergodic sum is constant on each $P\in\eta_0^{L-1}$. For those $P$ that intersect $Y_L''$ we then have 
\[
|V_P|<(\mu(\homsp_{>s})+\frac1{3sh_m(T)})L.
\]
Thus, Lemma~\ref{coverimp} together with the above covering estimate on $Y_L'$ implies that $Y_L''$ can be covered with $N_L$ Bowen $L$-balls, where 
\[
 N_L \leq e^{(h_\mu(T,\eta)+\frac1{3s})L} (ce^{h_m(T)\ell})^{\frac{L}{\ell''}} e^{\frac12h_m(T)\mu(\homsp_{>s})L+\frac1{6s}L}.
\]
Choose $s'$ so big such that $(\log c +h_m(T)\ell)/\ell''<\frac1{6s}$. This implies the proposition.
\end{proof}

We now restate and prove Theorem~A from the introduction.

\begin{thm}\label{entropycusp} 
Let $(\mu_j)$ be a sequence of $T$-invariant probability measures on $\homsp$ which converges to the measure $\nu$. Then 

\[
 \nu(\homsp) h_{\frac{\nu}{\nu(\homsp)}}(T) + \tfrac12 h_m(T)\big(1-\nu(\homsp)\big) \geq \limsup_{j\to\infty} h_{\mu_j}(T),
\]
where it does not matter how we interpret $h_{\frac{\nu}{\nu(\homsp)}}(T)$ if $\nu(\homsp)=0$.
\end{thm}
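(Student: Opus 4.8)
The plan is to derive the theorem from Proposition~\ref{commonpartition} by a soft limiting argument; all the hard analysis (cusp geometry, Bowen-ball counting) is already packaged there. Fix $s>s_3$ with $\nu(\{x\in\homsp\mid\height(x)=s\})=0$, which excludes only countably many values of $s$ since $\nu$ is finite, and let $\eta=\eta_s$ be the partition furnished by Proposition~\ref{commonpartition}. Using the freedom in the construction of the sets $P_1,\dots,P_r$ in Lemma~\ref{coverimp} and Proposition~\ref{commonpartition} (they only need small diameter along finitely many iterates) together with continuity of $\height$, we may in addition arrange -- refining the $P_i$ and, if necessary, taking $s_3$ outside the countable set of heights carrying $\nu$-mass -- that every element of $\eta$ has $\nu$-null topological boundary. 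Applying Proposition~\ref{commonpartition} to each $\mu_j$ and using $1-\mu_j(\homsp_{\leq s})=\mu_j(\homsp_{>s})$ gives
\[
 h_{\mu_j}(T)\leq h_{\mu_j}(T,\eta)+\tfrac1s+\tfrac12 h_m(T)\,\mu_j(\homsp_{>s}),
\]
so it remains to let $j\to\infty$ and then $s\to\infty$.

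To take the limit I would pass to the one-point compactification $\homsp^{\ast}=\homsp\sqcup\{\infty\}$, on which $\mu_j$ converges weak* to $\nu^{\ast}\sceq\nu+(1-\nu(\homsp))\delta_\infty$ and the extension of $T$ fixes $\infty$. Adjoining $\infty$ to the cusp atom of $\eta$ yields a partition $\eta^{\ast}$ of $\homsp^{\ast}$; since $\homsp_{>s}\cup\{\infty\}$ is exactly the complement of the compact set $\homsp_{\leq s}$, it is open in $\homsp^{\ast}$, and our boundary conditions make every atom of $\eta^{\ast}$, hence of each refinement $\bigvee_{i=0}^{n-1}T^{-i}\eta^{\ast}$, have $\nu^{\ast}$-null boundary. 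Therefore $\mu\mapsto h_\mu(T,\eta^{\ast})$, an infimum over $n$ of the maps $\mu\mapsto\tfrac1n H_\mu\!\left(\bigvee_{i=0}^{n-1}T^{-i}\eta^{\ast}\right)$ which are weak*-continuous at $\nu^{\ast}$, is upper semicontinuous at $\nu^{\ast}$ on the (compact) space of invariant probability measures, so
\[
 \limsup_{j\to\infty}h_{\mu_j}(T,\eta)=\limsup_{j\to\infty}h_{\mu_j}(T,\eta^{\ast})\leq h_{\nu^{\ast}}(T,\eta^{\ast}).
\]
Now $\mu\mapsto h_\mu(T,\eta^{\ast})$ is affine (the $O(1)$ defect of $H_\mu$ under convex combinations washes out after dividing by $n$) and $h_{\delta_\infty}(T,\eta^{\ast})=0$ since $\delta_\infty$ is a point mass at a fixed point; writing $\nu^{\ast}=\nu(\homsp)\cdot\tfrac{\nu}{\nu(\homsp)}+(1-\nu(\homsp))\,\delta_\infty$ thus gives $h_{\nu^{\ast}}(T,\eta^{\ast})=\nu(\homsp)\,h_{\nu/\nu(\homsp)}(T,\eta)\leq\nu(\homsp)\,h_{\nu/\nu(\homsp)}(T)$, which is $0$ when $\nu(\homsp)=0$ since $h_m(T)<\infty$. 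For the mass term, $\homsp_{>s}\subseteq\homsp_{\geq s}$, the set $\homsp_{\geq s}\cup\{\infty\}$ is closed in $\homsp^{\ast}$, and the portmanteau inequality together with $\nu(\{\height=s\})=0$ yields $\limsup_j\mu_j(\homsp_{>s})\leq\nu^{\ast}(\homsp_{\geq s}\cup\{\infty\})=\nu(\homsp_{>s})+1-\nu(\homsp)$.

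Combining these, for every admissible $s$,
\[
 \limsup_{j\to\infty}h_{\mu_j}(T)\leq\nu(\homsp)\,h_{\nu/\nu(\homsp)}(T)+\tfrac1s+\tfrac12 h_m(T)\big(\nu(\homsp_{>s})+1-\nu(\homsp)\big),
\]
and letting $s\to\infty$ through admissible values makes $\tfrac1s\to0$ while $\nu(\homsp_{>s})\downarrow\nu\big(\bigcap_{s}\homsp_{>s}\big)=0$ because every point of $\homsp$ has finite height; this is exactly the asserted inequality. I expect the one genuinely delicate point to be the very first step -- producing a single partition that simultaneously meets all the requirements of Proposition~\ref{commonpartition} and has $\nu$-null boundaries -- whereas the rest is the standard affine/upper-semicontinuity behaviour of partition entropy, arranged so that the escaping mass sits at the fixed point $\infty$ and contributes nothing to entropy.
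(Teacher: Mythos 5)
Your proof is correct, but it takes a genuinely different route from the paper. The paper works entirely on $\homsp$ and argues by hand: it fixes the partition $\Xi_m=\bigvee_{k=0}^{m-1}T^{-k}\eta$, isolates the single ``escaping'' atom $Q=\bigcap_{k=0}^{m-1}T^{-k}\homsp_{>s}$ whose measure under $\mu_j$ need not converge to $\nu(Q)$, and controls the associated term in $H_{\mu_j}(\Xi_m)$ by the uniform bound $|x\log x|\leq e^{-1}$, relying on $\mu_j(P)\to\nu(P)$ for all the other (finitely many, $\nu$-null boundary) atoms. You instead pass to the one-point compactification $\homsp^{*}$, where the $\mu_j$ converge weak* as genuine probability measures to $\nu^{*}=\nu+(1-\nu(\homsp))\delta_\infty$, and then invoke three standard facts about partition entropy on a compact space: it is upper semicontinuous at measures giving null mass to the boundaries of the atoms, it is affine in the measure, and it vanishes for a point mass at a fixed point. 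This is cleaner and more modular -- you never have to do the ad hoc ``bad atom'' bookkeeping -- at the price of verifying that the extension of $T$ to $\homsp^{*}$ is a homeomorphism (true, since $T$ is a homeomorphism of $\homsp$, hence proper) and that each atom of $\eta^{*}$ really has $\nu^{*}$-null boundary in $\homsp^{*}$; the latter is the correct way to formalize the fact that escaping mass sits at $\infty$. You correctly flag the one delicate step -- simultaneously arranging the requirements of Proposition~\ref{commonpartition} and $\nu$-null boundaries for all atoms -- and the paper is equally brief about this point, so you are not missing anything there. One small remark: to have $\nu$-null boundary for the middle atom $\homsp_{>s_3}\cap\homsp_{\leq s}$ one also needs $\nu(\{\height=s_3\})=0$, which is implicit in the paper as well; your suggestion of nudging $s_3$ (or equivalently absorbing a thin collar into the $P_i$) handles this.
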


\begin{proof}
Pick $s>s_3$ such that $\nu(\partial \homsp_{\leq s}) = 0$ (this holds for all but countably many $s$). Let $\eta = \{ \homsp_{>s}, \homsp_{>s_3}\cap\homsp_{\leq s}, P_1,\ldots, P_r\}$ be a partition of $\homsp$ as in Proposition~\ref{commonpartition} such that $\nu(\partial P_j)=0$ for $j=1,\ldots, r$. Let $\eps > 0$. Suppose now that $\nu(\homsp)>0$. By definition of entropy we may fix $m\in\N$ such that 
\[
 h_{\frac{\nu}{\nu(\homsp)}}(T) + \eps > \frac1m H_{\frac{\nu}{\nu(\homsp)}}(\eta_0^{m-1})
\]
and 
\[
\frac{2e^{-1}}{m} < \frac{\eps}{2}\quad\text{and}\quad -\frac1m\log \nu(\homsp) < \eps.
\]
Then using~\eqref{eq:static} we get
\[
 \nu(\homsp) h_{\frac{\nu}{\nu(\homsp)}}(T) + 2\eps > -\frac1m\sum_{P\in\eta_0^{m-1}} \nu(P) \log\nu(P).
\]
Note that this holds trivially if $\nu(\homsp)=0$.

Let
\[
 Q\sceq \bigcap_{k=0}^{m-1} T^{-k} \homsp_{>s}.
\]
Since
\[
\sum_{P\in\eta_0^{m-1}\setminus\{Q\}} \mu_j(P) \log\mu_j(P) \stackrel{j\to\infty}{\longrightarrow} \sum_{P\in\eta_0^{m-1}\setminus\{Q\}} \nu(P)\log\nu(P),
\]
we find $j_0\in\N$ such that for all $j\geq j_0$ we have
\begin{align*}
&\left|-\frac1m \sum_{P\in\eta_0^{m-1}} \nu(P)\log\nu(P)  -\frac1m H_{\mu_j}(\eta_0^{m-1})\right|
\\
& \quad\leq \frac1m \left| \sum_{P\in\eta_0^{m-1}\setminus\{Q\}} \big( \mu_j(P)\log\mu_j(P) - \nu(P)\log\nu(P)\big)\right| 
\\
& \hphantom{\sum_{P\in\eta_0^{m-1}\setminus\{Q\}} \big( \mu_j(P)\log\mu_j(P) }+ \frac1m \big| \mu_j(Q)\log\mu_j(Q) - \nu(Q)\log\nu(Q)\big|
\\
& \quad \leq \frac{\eps}{2} + \frac{2e^{-1}}{m} < \eps. 
\end{align*}
This and Proposition~\ref{commonpartition} yield
\begin{align*}
\nu(\homsp) h_{\frac{\nu}{\nu(\homsp)}}(T) + 3\eps &> \frac1m H_{\mu_j}(\eta_0^{m-1}) \geq h_{\mu_j}(T,\eta) 
\\
& > h_{\mu_j}(T) - \tfrac1s - \tfrac12 h_m(T) \cdot\big( 1- \mu_j(\homsp_{\leq s})\big).
\end{align*}
Hence
\[
 \nu(\homsp) h_{\frac{\nu}{\nu(\homsp)}}(T) + \tfrac12 h_m(T) \cdot \big( 1- \nu(\homsp_{\leq s})\big) + 3\eps + \tfrac1s \geq \limsup_{j\to\infty} h_{\mu_j}(T).
\]
Letting $\eps$ tend to $0$ and $s$ tend to infinity, it follows
\[
 \nu(\homsp) h_{\frac{\nu}{\nu(\homsp)}}(T) + \tfrac12 h_m(T) \cdot\big( 1- \nu(\homsp)\big) \geq \limsup_{j\to\infty} h_{\mu_j}(T).
\]
\end{proof}

As an immediate consequence of Proposition~\ref{maxentropy} and Theorem~\ref{entropycusp} we obtain the corollaries stated in the introduction.

\begin{cor}\label{entropcor}
Let $(\mu_j)_{j\in\N}$ be a sequence of $T$-invariant probability measures on $\homsp$ such that $\liminf_{j\to\infty} h_{\mu_j}(T) \geq c$. Let $\nu$ be any weak* limit point of $(\mu_j)$. Then 
\[
 \nu(\homsp) \geq \frac{2c}{h_m(T)} -1.
\]
Moreover, if 
\[
 \nu(\homsp) = \frac{2c}{h_m(T)} -1 > 0,
\]
then $h_{\frac{\nu}{\nu(\homsp)}}(T) = h_m(T)$ and $\frac{\nu}{\nu(\homsp)}$ is the Haar measure on $\homsp$.
\end{cor}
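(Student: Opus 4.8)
The plan is to derive Corollary~\ref{entropcor} directly from Theorem~\ref{entropycusp}, using only the elementary fact that $h_m(T)$ bounds the entropy of every $T$-invariant probability measure.

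First I would reduce to a convergent sequence. Since $\nu$ is a weak* accumulation point of $(\mu_j)$, pick a subsequence $(\mu_{j_k})_k$ with $\mu_{j_k}\to\nu$ weak*. From $\liminf_{j\to\infty}h_{\mu_j}(T)\geq c$ we get $\limsup_{k\to\infty}h_{\mu_{j_k}}(T)\geq\liminf_{k\to\infty}h_{\mu_{j_k}}(T)\geq c$. Applying Theorem~\ref{entropycusp} to the sequence $(\mu_{j_k})$ then yields
\[
 \nu(\homsp)\,h_{\frac{\nu}{\nu(\homsp)}}(T) + \tfrac12 h_m(T)\bigl(1-\nu(\homsp)\bigr)\;\geq\; c.
\]

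Next I would plug in $h_{\frac{\nu}{\nu(\homsp)}}(T)\leq h_m(T)$. When $\nu(\homsp)>0$, the normalized measure $\frac{\nu}{\nu(\homsp)}$ is again a $T$-invariant probability measure (invariance of $\nu$ is inherited from the $\mu_{j_k}$), so this bound holds by definition of $h_m(T)$. Substituting and collecting terms gives $h_m(T)\bigl(\tfrac12+\tfrac12\nu(\homsp)\bigr)\geq c$, i.e.\ $\nu(\homsp)\geq\frac{2c}{h_m(T)}-1$. If instead $\nu(\homsp)=0$, Theorem~\ref{entropycusp} already gives $\tfrac12 h_m(T)\geq c$, hence $\frac{2c}{h_m(T)}-1\leq 0=\nu(\homsp)$, so the asserted inequality holds in all cases.

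For the equality statement, suppose $b\sceq\nu(\homsp)=\frac{2c}{h_m(T)}-1>0$, so that $c=\tfrac12 h_m(T)(1+b)$. Feeding this into the first display gives $b\,h_{\frac{\nu}{\nu(\homsp)}}(T)\geq c-\tfrac12 h_m(T)(1-b)=\tfrac12 h_m(T)\bigl((1+b)-(1-b)\bigr)=b\,h_m(T)$, and dividing by $b>0$ yields $h_{\frac{\nu}{\nu(\homsp)}}(T)\geq h_m(T)$; together with $h_{\frac{\nu}{\nu(\homsp)}}(T)\leq h_m(T)$ this forces equality. I do not anticipate a genuine obstacle: all the substance is in Theorem~\ref{entropycusp}, and the only delicate points are checking that the theorem's input (a weak* convergent sequence along which $\limsup h_{\mu_{j_k}}(T)\geq c$) is met and handling the degenerate case $\nu(\homsp)=0$ so that the stated inequality is neither vacuous nor ill-posed.
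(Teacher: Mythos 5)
Your argument is correct and is exactly the derivation the paper intends: the corollary is stated there as an immediate consequence of Theorem~\ref{entropycusp}, obtained by passing to a weak* convergent subsequence, bounding $h_{\frac{\nu}{\nu(\homsp)}}(T)$ by $h_m(T)$, and rearranging, with the equality case following from the same inequality. Your handling of the degenerate case $\nu(\homsp)=0$ matches the theorem's proviso that the entropy term may then be interpreted arbitrarily, so nothing is missing.
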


\section{Hausdorff dimension of orbits missing a fixed open subset}\label{notfull}

In this section we prove Theorem~B from the introduction, which is an application of Theorem~\ref{entropycusp} and the methods for its proof and answers a question by Barak Weiss about the Hausdorff dimension of the set of all orbits which miss a fixed open subset of $\homsp$. We note that for a \textit{compact} quotient this is a simple corollary of semi-continuity of entropy and uniqueness of the measure of maximal entropy. In the presence of cusps, the methods of this paper become relevant. We also note that related results have been obtained by Shi \cite{Shi} but to our knowledge these do not provide the following results as corollaries.

Let $\open \subseteq \homsp$ be a non-empty open subset. Let $\ok$ denote the set of points in $\homsp$ whose forward-$A$-orbits do not intersect $\open$, that is
\[
 \ok \sceq \{ x\in\homsp \mid \forall\, t\geq 0\colon xa_t \notin\open\}.
\]
In the following we will show that $\ok$ cannot have full Hausdorff dimension as claimed in Theorem B of the introduction. Instead of Theorem B we will prove a (stronger) discretized version. To that end we now define $T$ 
using $\tilde a=a_e$, ($e = \exp(1)$), so that 
\[
 T \colon \homsp \to \homsp,\quad x\mapsto xa_e,
\]
denotes the time-one (discrete) geodesic flow. Note that then the maximal entropy of $T$ is
\[
 h_m(T) = \frac{p_1}2 + p_2.
\]
We consider the set 
\[
 \ok' \sceq \{ x\in \homsp \mid \forall\, n\in\N_0\colon T^nx \notin \open\}.
\]
Then Theorem B is implied by the following

\begin{thm}\label{thmdiscrete}
The Hausdorff dimension of~$\ok'$ satisfies  $\dim_H \ok' < \dim\homsp=\dim G$.
\end{thm}

For convenience we recall the following definitions, adapted to our current set-up. For $r>0$, the open ball in $G$ centered at the identity of $G$ with radius $r$ is denoted by $B_r^G$. For $L\in\N$, the Bowen $L$-ball in $G$ with radius parameter $r$ is 
\[
 B_L = B_L(r) = \bigcap_{j=0}^{L-1} a_e^j B_r^G a_e^{-j}.
\]
Finally, for each $x\in\homsp$, the Bowen $L$-ball in $\homsp$ with center $x$ is
\[
 x B_L = x B_L(r).
\]

\textit{Strategy for the proof of Theorem~\ref{thmdiscrete}:}
We cover $\ok'$ by countably many (small) bounded open sets, say by $A(n), n\in\N$,  and estimate the Hausdorff dimension of each of the sets
\[
 \mc W_n \sceq \ok'\cap A(n).
\]
By countable stability of Hausdorff dimension we have
\[
 \dim_H \ok' = \sup\{ \dim_H\mc W_n \mid n\in\N\}.
\]
Thus, we have to show that 
\[
 \dim_H\mc W_n < \dim\homsp - \eps_0=\dim G-\eps_0
\]
for some $\eps_0>0$ not depending on $n\in\N$. To seek a contradiction we assume that 
for (some small, to be determined below) $\eps_0>0$ we find a set $\mc W = \mc W(\eps_0)$ among the sets $\mc W_n$ such that 
\begin{equation}\label{Wdim}
 d \sceq \dim_H \mc W \geq \dim \homsp - \eps_0.
\end{equation}
Frostman's Lemma assures the existence of a probability measure $\mu$ on $\mc W$ such that 
\begin{equation}\label{frostmeasure}
 \mu(xB^G_r) \ll r^{d-\eps_0}\leq r^{\dim G-2\eps_0}
\end{equation}
for any $x\in \homsp$ and any $r\in(0,1]$, with an implied constant only depending on $\mu$. Then we will give an upper bound for the number of Bowen $L$-balls needed to cover $\mc W$ as well as the $\mu$-mass of an Bowen $L$-ball. Bounding the $\mu$-mass of $\mc W$ (which is $1$) via these Bowen balls will result in a contradiction.

We start by choosing a good value for $\eps_0$. Recall that $\mc M_1(\homsp)^T$ denotes the space of $T$-invariant probability measures on $\homsp$ and that $h_m(T)$ is the maximal entropy of $T$ (see Proposition~\ref{maxentropy}).

\begin{lemma}\label{univgap}
There exists $\delta_0>0$ such that 
$h_\nu(T)\leq h_m(T)-\delta_0$ for all~$\nu\in \mc M_1(\homsp)^T$ with~$\supp\nu\subseteq\ok'$. 
\end{lemma}

\begin{proof}
To seek a contradiction assume that there exists a sequence $(\nu_n)_{n\in\N}$ in $\mc M_1(\homsp)^T$ with $\supp \nu_n \subseteq \ok'$ for each $n\in\N$ such that 
\[
 h_{\nu_n}(T) \geq h_m(T) - \frac1n.
\]
Let $\nu$ be any weak* limit point of $(\nu_n)$. Then $\supp\nu \subseteq \ok'$ as $\ok'$ is closed. Since $\lim h_{\nu_n}(T) = h_m(T)$, Corollary~\ref{entropcor} yields $\nu(\homsp) = 1$ and $h_{\nu}(T) = h_m(T)$. Thus, $\nu$ is the Haar measure on $\homsp$ and hence $\supp\nu = \homsp$. This is a contradiction.
\end{proof}

We fix $\delta_0 \in (0, h_m(T)/4]$ with the properties as in Lemma~\ref{univgap} (the upper bound will be needed for Proposition~\ref{boundnu} below), set
\[
 \eps_0 \sceq \frac{\delta_0}{20},
\]
and assume the existence of~$\mc W\subset\ok'$ satisfying~\eqref{Wdim}. We also fix a probability measure $\mu$ on $\mc W$ satisfying \eqref{frostmeasure}. 

We pick a weak* limit point $\nu$ of 
\[
 \frac{1}{L} \sum_{j=0}^{L-1} T^j_*\mu \qquad\text{as $L\to\infty$.}
\]
Then $\nu$ is $T$-invariant and $\nu(\open) = 0$. We will see in Proposition~\ref{boundnu} below that $\nu(\homsp)>0$. Then Lemma~\ref{univgap} shows
\[
 h_{\frac{\nu}{\nu(\homsp)}}(T) \leq h_m(T) -\delta_0,
\]
which will allow us to establish nontrivial bounds on the number of Bowen $L$-balls needed to cover $\mc W$.

We start by deriving a lower bound for $\nu(\homsp)$, where the following will be needed.

\begin{lemma}\label{boxmass}
For any sufficiently (depending only on~$G$) small $r>0$, any $x\in \homsp$  and any $L\in\N$  we have
\[
 \mu(x \overline{B_L(r)})\leq c_\mu r^{\dim G-2\eps_0} e^{\left( -h_m(T) + 2\eps_0\right)L},
\]
where the constant $c_\mu$ depends on the implied constant in \eqref{frostmeasure} and on~$G$.
\end{lemma}

\begin{proof}
Choose a maximal collection of elements~$g_1,\ldots,g_q\in \overline{B_L(r)}$
for which the balls $g_1 B_{re^{-L}},\ldots,g_q B_{r e^{-L}}$ are pairwise disjoint. 
Note that $B_{re^{-L}}\subseteq B_L(r)$
and so~$g_j B_{re^{-L}}\subseteq B_L(2r)$. By Lemma~\ref{massBowen} the Haar measure of~$B_L(r)$
is between bounded multiples (depending on~$G$) of~$r^{\dim G}e^{-h_m(T)L}$ and the Haar measure of~$B_{re^{-L}}$
is between bounded multiples of~$r^{\dim G}e^{-L\dim G}$. Hence it follows that~$q\leq c 'e^{L(\dim G-h_m(T))}$
for some constant~$c'$ that only depends on~$G$.

The maximality of $q$ implies $x\overline{B_L(r)}\subseteq xg_1B_{2re^{-L}}\cup\cdots\cup x g_qB_{2re^{-L}}$. We now apply \eqref{frostmeasure} and obtain
\begin{align*}
 \mu(x\overline{B_L(r)})&\leq 
  q c_\mu' 2^{\dim G-2\eps_0} r^{\dim G-2\eps_0} e^{-L(\dim G-2\eps_0)}\\
&\leq  c_\mu r^{\dim G-2\eps_0} e^{(\dim G-h_m(T) -\dim G +2\eps_0)L}   ,
\end{align*}
where~$c_\mu'$ is the implied constant in~\eqref{frostmeasure}.
\end{proof}

For a subset $V\subseteq [0,L-1]$ we set
\[
 Q_{s,V} \sceq \left\{ x\in \homsp \left\vert\  \forall\, j\in [0,L-1] \colon \left(T^jx \in \homsp_{> s} \Leftrightarrow j\in V\right) \right.\right\}.
\]

\begin{lemma}\label{numberpartition}
Let $s > s_3$ and 
\[
L \geq 2\log\left(\frac{s}{s_3}\right)+1.
\]
Then there are at most $ e^{f(s)   L  }$
subsets $V\subseteq [0,L-1]$ for which $Q_{s,V}$ is nonempty, where 
$$ f(s) \sceq \frac{4 \log\left( 2 \log\left(\frac{s}{s_3}\right) + 2 \right)}{\log\left(\frac{s}{s_3}\right)}.$$
\end{lemma}

\begin{proof}
Let $V \subseteq [0,L-1]$ and decompose $V$ as in \eqref{decompV} (using $s'\sceq s$). We will show that $Q_{s,V}$ being nonempty implies that there is a uniform nontrivial minimal distance between $k_n+K_n$ and $k_{n+1}$ since a trajectory going down from $\homsp_{>s}$ cannot go back up before entering $\homsp_{\leq s_3}$ (see Lemma~\ref{s2s3}\eqref{descent}). The existence of this distance yields restrictions on those $V$ for which $Q_{s,V}\not=\emptyset$.

At first suppose that we have $x\in \homsp$ with $\height(x) \leq s_3$ and $\height(T^j x) > s$ for some $j\in \N$. By Lemma~\ref{s2s3}\eqref{below} we may suppose  $\height(xa_t) > 2s_1$ for all $t\in [1,e^j]$. We aim to prove a nontrivial lower bound on $j$.  By Proposition~\ref{ht_super}  there exists a unique cusp representative $\xi\in\Xi$ and an element $g\in G$ with $x=\Gamma g$ such that
\[
 \height(xa_t) = \height_\xi(xa_t) = \left( \frac{ \| v_\xi \varrho(ga_t) \| }{ \| v_\xi \varrho(\xi) \| }\right)^{-\frac1q}
\]
for all $t\in [1,e^j]$. Proposition~\ref{sojourn} implies $g \in \xi NAMU$, say $g=\xi n a_r m u$ with $u=\sigma(1, Z, X)\sigma$. Lemma~\ref{value} shows
\[
 \height_\xi(xa_t) = r \cdot \frac{\frac1t}{ \left( \frac1t + \frac14|X|^2 \right)^2 + |Z|^2 }
\]
for all $t\in [1,e^j]$. In particular,
\[
 s_3 > \height_\xi(x) = \frac{r}{\left( 1 + \frac14 |X|^2 \right)^2 + |Z|^2}
\]
and
\[
 \height(xa_{e^j}) = r \cdot \frac{ e^{-j} }{ \left( e^{-j} + \frac14|X|^2 \right)^2 + |Z|^2 } > s.
\]
Therefore
\[
 \frac{ \left(1+\frac14|X|^2\right)^2 + |Z|^2  }{ \left(e^{-j} + \tfrac14|X|^2\right)^2 + |Z|^2 } \cdot e^{-j} > \frac{s}{s_3}.
\]
Together with the elementary estimate
\[
 e^{2j} \geq \frac{ \left(1+\frac14|X|^2\right)^2 + |Z|^2  }{ \left(e^{-j} + \tfrac14|X|^2\right)^2 + |Z|^2 }
\]
it follows that 
\[
 j > \log\left( \frac{s}{s_3}\right).
\]
Suppose now that we have $x\in\homsp$ with $\height(x) > s$ and $\height(T^jx) \leq s_3$ for some $j\in\N$. Invoking Lemma~\ref{s2s3}\eqref{above}, we can deduce as before that
\[
 j > \log\left(\frac{s}{s_3}\right).
\]
We set
\[
 j_0 \sceq \left\lceil  \log\left(\frac{s}{s_3}\right) \right\rceil.
\]
Lemma~\ref{s2s3}\eqref{descent} implies that 
\[
 k_n + K_n + 2j_0 \leq k_{n+1}
\]
for $n=1,\ldots, m_1-1$. 
Let
\[
 Q_0^L(s) \sceq \bigvee_{j=0}^{L-1} T^{-j}\{\homsp_{\leq s}, \homsp_{>s}\}.
\]
If $L=2j_0-1$, the definition of $j_0$ yields that the cardinality of $Q_0^L(s)$ is (note $j_0\geq 1$)
\[
 \leq 1 + {2j_0 \choose 2} = \frac{(2j_0)^2}{2} + 1 - j_0 \leq (2j_0)^2.
\]
For an arbitrary $L$ the set $[0,L-1]$ is covered by the disjoint union 
\[
 [0,L-1] \subseteq \bigcup_{h=0}^{k_L} h\cdot(2 j_0 -1) + [0,2j_0-2]
\]
with
\[
 k_L \sceq \left\lceil \frac{L}{2j_0 -1} \right\rceil.
\]
For each $h\in\{0,\ldots, k_L\}$, the cardinality of 
\[
 \big\{ V \cap \left( h\cdot(2 j_0 -1) + [0,2j_0-2] \right) \ \big\vert\  V\subseteq [0,L-1],\ Q_{s,V} \not=\emptyset \big\}
\]
is at most $(2j_0)^2$. Therefore, there are at most 
\[
 (2j_0)^{2k_L}
\]
subsets $V\subseteq [0,L-1]$ with $Q_{s,V} \not= \emptyset$. Hence the cardinality of $Q_0^L(s)$ is bounded from above by
\[
 \exp\left( 2k_L\cdot \log (2j_0) \right).
\]
Using
\[
 k_L \leq \frac{L}{2 j_0 -1} + 1
\]
and
\[
 \frac{ \log\left( 2 \left\lceil \log\left(\frac{s}{s_3}\right) \right\rceil\right) }{ 2\left\lceil \log\left(\frac{s}{s_3}\right)\right\rceil - 1} \leq \frac{1}{\log \left(\frac{s}{s_3}\right)} \log\left( 2 \log\left(\frac{s}{s_3}\right) + 2\right),
\]
the statement of the proposition follows easily.
\end{proof}

For $L\in\N$, a subset $V\subseteq [0,L-1]$ and $s>0$ we set
\[
 Z_{s,L}(V) \sceq \left\{ x\in\mc W\cap \homsp_{\leq s} \left\vert\  \forall\, j\in [0,L-1]\colon \left(T^jx \in \homsp_{>s} \Leftrightarrow j\in V\right)\right.\right\}.
\]
Lemma~\ref{numberpartition} immediately provides an upper bound on the number of nonempty sets $Z_{s,L}(V)$. By increasing $s_3$ we may assume from now on that $\mc W \subseteq \homsp_{\leq s_3}$.

\begin{lemma}\label{numberZbox}
Suppose that the parameter $r$ in the definition of Bowen balls is an injectivity radius of $\homsp_{\leq s_3}$. Let $s>s_3$, $L$ as in Lemma~\ref{numberpartition} and $V\subseteq [0,L-1]$. Then the set $Z_{s,L}(V)$ can be covered with
\[
 c_{r,\mc W}e^{c(s) L + h_m(T) (L-\frac12|V|)}
\]
Bowen $L$-balls, where $c(s) \to 0$ as $s\to \infty$ and the constant $c_{r,\mc W}$ does not depend on $s,L$ and $V$.
\end{lemma}

\begin{proof}
The proof is similar to that of Lemma~\ref{coverimp} with $s=s'$.
 We can cover $Z_{s,L}(V)$ with finitely many balls $xB_r$ with $x\in Z_{s,L}(V)$, say
\begin{equation}\label{coverZbox}
 Z_{s,L}(V) \subseteq \bigcup_{j=1}^{c_{r,\mc W}} x_jB_r.
\end{equation}
The necessary number $c_{r,\mc W}$ of such balls is bounded 
by a constant independent of $s,L$ and $V$. Suppose now that $x_0B_r$ is one of the sets used in the covering \eqref{coverZbox} and consider
\[
 \mc Z \sceq Z_{s,L}(V) \cap x_0B_r.
\]
If $\mc Z$ is covered with say $d_1$ Bowen $\ell$-balls with center in $\mc Z$, then, as in Lemma~\ref{coverimp} ``step corresponding to adding $\tilde V_j$'', an excursion into $\homsp_{>s}$ of length $\ell_1$ starting at~$\ell+1$ has the effect that $\mc Z$ is covered by
\[
 {c} d_1 e^{\frac12 h_m(T) \ell_1}
\]
Bowen $(\ell+\ell_1)$-balls. In contrast (as in Lemma~\ref{coverimp} ``step corresponding to adding $W_j$''), a stay  in $\homsp_{\leq s}$ of length $\ell_2$ only leads to a trivial estimate, that is $\mc Z$ is covered by
\[
 {c} d_1 e^{h_m(T)\ell_2}
\]
Bowen $(\ell+\ell_2)$-balls.

By iteratively applying these two steps, we see that $\mc Z$ can be covered with
\[
 c^{{2} m} e^{h_m(T) \left( L - \frac12 |V|\right)}
\]
Bowen $L$-balls, where $c$ is a constant independent of $s,L$ and $V$, and $m$ is the number of excursions into  $\homsp_{>s}$. The proof of Lemma~\ref{numberpartition} now shows that 
\[
 m\leq \frac{2L}{\log\left(\frac{s}{s_3}\right)} + 1.
\]
This completes the proof.
\end{proof}

\begin{prop}\label{boundnu}
We have
\[
 \nu(\homsp) \geq 1- \frac{4\eps_0}{h_m(T)}\geq 1-\frac1{20}.
\]
\end{prop}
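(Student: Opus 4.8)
The plan is to bound the escaping mass $1-\nu(\homsp)$ by the asymptotic fraction of time a $\mu$-typical orbit spends above a fixed height $s$, and to control this fraction using the covering estimates of Lemmas~\ref{numberZbox}, \ref{boxmass} and \ref{numberpartition}. Write $\mu^{(L)}\sceq\frac1L\sum_{j=0}^{L-1}T^j_*\mu$, so that $\nu=\lim_{k}\mu^{(L_k)}$ in the weak* topology for some sequence $L_k\to\infty$. For $s>s_3$, unravelling definitions gives
\[
 \mu^{(L)}(\homsp_{>s})=\frac1L\sum_{j=0}^{L-1}\mu\big(T^{-j}\homsp_{>s}\big)=\frac1L\int_{\homsp}\big|V^{(L)}_x\big|\,d\mu(x),\qquad V^{(L)}_x\sceq\{j\in[0,L-1]\mid T^jx\in\homsp_{>s}\},
\]
and $V^{(L)}_x=V$ exactly when $x\in Z_{s,L}(V)$. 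Since $\homsp_{\le s}$ is compact and the probability measures $\mu^{(L_k)}$ converge weak* to $\nu$, we have $\limsup_k\mu^{(L_k)}(\homsp_{\le s})\le\nu(\homsp_{\le s})$, hence $1-\nu(\homsp_{\le s})\le\liminf_k\mu^{(L_k)}(\homsp_{>s})$. Thus it suffices to prove $\mu^{(L)}(\homsp_{>s})\le \tfrac2{h_m(T)}\big(\beta(s)+\eps_0+\tfrac{\delta_0}{4}\big)+o(1)$ as $L\to\infty$ with a function $\beta(s)\to 0$ as $s\to\infty$; for then $\nu(\homsp)\ge\nu(\homsp_{\le s})\ge 1-\tfrac2{h_m(T)}(\beta(s)+\eps_0+\tfrac{\delta_0}{4})$ and letting $s\to\infty$ yields the proposition.

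Arguing as in the proof of Lemma~\ref{numberZbox} we may assume $\mc W\subseteq\homsp_{\le s_3}$, so that $\mc W$ is the disjoint union of the sets $Z_{s,L}(V)$ over $V\subseteq[0,L-1]$ and $\sum_V\mu(Z_{s,L}(V))=1$. Fix $s>s_3$ and, for now, $L$ large enough for Lemmas~\ref{numberpartition} and \ref{numberZbox} to apply; also fix a radius $r$ below the injectivity radius of $\mc W$. Covering $Z_{s,L}(V)$ by at most $c_Z e^{c(s)L+h_m(T)(L-\frac12|V|)}$ $L$-boxes (Lemma~\ref{numberZbox}) and bounding the $\mu$-mass of each $L$-box by $cr^{d-\delta_0/4}e^{(-h_m(T)+\eps_0+\delta_0/4)L}$ (Lemma~\ref{boxmass}), the $e^{\pm h_m(T)L}$ factors cancel and we obtain
\[
 \mu\big(Z_{s,L}(V)\big)\le C_0\,e^{(c(s)+\eps_0+\frac{\delta_0}{4})L}\,e^{-\frac12 h_m(T)|V|}
\]
with $C_0$ independent of $s$, $L$ and $V$. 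Multiplying by $e^{\frac12 h_m(T)|V|}$ and summing over the (at most $e^{\gamma(s)L}$ by Lemma~\ref{numberpartition}, with $\gamma(s)=\tfrac{4\log(2\log(s/s_3)+2)}{\log(s/s_3)}$) subsets $V$ with $Z_{s,L}(V)\ne\emptyset$,
\[
 \sum_{V}\mu\big(Z_{s,L}(V)\big)\,e^{\frac12 h_m(T)|V|}\le C_0\,e^{(\gamma(s)+c(s)+\eps_0+\frac{\delta_0}{4})L}.
\]
Treating $V\mapsto\mu(Z_{s,L}(V))$ as a probability vector, Jensen's inequality for $\exp$ gives $e^{\frac12 h_m(T)\sum_V|V|\mu(Z_{s,L}(V))}\le\sum_V\mu(Z_{s,L}(V))e^{\frac12 h_m(T)|V|}$; since $\sum_V|V|\mu(Z_{s,L}(V))=\int|V^{(L)}_x|\,d\mu=L\,\mu^{(L)}(\homsp_{>s})$, taking logarithms yields
\[
 \mu^{(L)}(\homsp_{>s})\le\frac2{h_m(T)}\Big(\frac{\log C_0}{L}+\gamma(s)+c(s)+\eps_0+\frac{\delta_0}{4}\Big).
\]
Letting $L=L_k\to\infty$ (which is eventually large enough for the fixed $s$) and combining with $1-\nu(\homsp_{\le s})\le\liminf_k\mu^{(L_k)}(\homsp_{>s})$ gives $1-\nu(\homsp_{\le s})\le\tfrac2{h_m(T)}(\gamma(s)+c(s)+\eps_0+\tfrac{\delta_0}{4})$; since $\gamma(s)\to0$ and $c(s)\to0$ as $s\to\infty$ (Lemma~\ref{numberZbox}), letting $s\to\infty$ gives $\nu(\homsp)\ge1-\tfrac2{h_m(T)}(\eps_0+\tfrac{\delta_0}{4})=1-\tfrac{\delta_0+4\eps_0}{2h_m(T)}$.

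Essentially all the real work has already been invested in Lemmas~\ref{boxmass}, \ref{numberZbox} and \ref{numberpartition}, so what remains here is mainly organisational bookkeeping. The one genuine point is that $|V^{(L)}_x|$ may take any value in $[0,L]$, so a pointwise worst-case bound over the combinatorial types $V$ would be far too crude: one has to pass from the exponential-moment estimate on $|V|$ to a bound on the \emph{mean} $\mu^{(L)}(\homsp_{>s})$, which is precisely what Jensen's inequality supplies, while checking that the extraneous exponents $\gamma(s)$, $c(s)$ and $\tfrac{\log C_0}{L}$ all disappear in the relevant limits.
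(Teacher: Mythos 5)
Your proof is correct and rests on exactly the same three ingredients as the paper's (Lemmas~\ref{numberpartition}, \ref{numberZbox} and \ref{boxmass}), the same Birkhoff averages $\mu^{(L)}=\tfrac1L\sum_{j<L}T^j_*\mu$, and the same identity $L\,\mu^{(L)}(\homsp_{>s})=\int|V_x|\,d\mu$. The only genuine difference is how you convert the estimate $\mu(Z_{s,L}(V))\le C_0\,e^{(c(s)+\eps_0+\delta_0/4)L}e^{-\frac12 h_m(T)|V|}$ into a bound on the mean $\frac1L\int|V_x|\,d\mu$. The paper uses a threshold-and-tail argument: it fixes $\varrho$ slightly above the critical value, splits the sum at $|V_x|\ge\varrho L$, and shows the bad set's $\mu$-measure decays exponentially in $L$, concluding $\mu^{(L)}(\homsp_{>s})\le\varrho+o(1)$. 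You instead bound the exponential moment $\sum_V\mu(Z_{s,L}(V))e^{\frac12 h_m(T)|V|}$ and apply Jensen's inequality for $\exp$ (noting that $\mc W$ is partitioned by the $Z_{s,L}(V)$, so the weights form a probability vector once you observe $\mc W\subseteq\homsp_{\le s}$), obtaining the mean bound in one pass without choosing a free parameter $\varrho$. The Jensen route is marginally cleaner; the paper's route makes the Chernoff-type tail decay visible, which is perhaps more in the spirit of the surrounding covering arguments. One cosmetic remark: you invoke the ``WLOG $\mc W\subseteq\homsp_{\le s_3}$'' reduction already used in the paper's proof of Lemma~\ref{numberZbox} to ensure the $Z_{s,L}(V)$ actually partition all of $\mc W$; the paper's proof of Proposition~\ref{boundnu} instead handles the term $\mu(\homsp_{>s}\cap T^{-j}\homsp_{>s})$ by taking $s$ large enough that it vanishes, which amounts to the same thing.
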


\begin{proof}
For $L\in\N$ set 
\begin{equation}\label{muL}
 \mu_L \sceq \frac1L\sum_{j=0}^{L-1} T^{j}_*\mu.
\end{equation}
Then $\mu_L$ converges along some subsequence to $\nu$ in the weak* topology. For any $s>s_3$ we have
\[
\mu_L(\homsp_{>s})  = \frac1L \sum_{j=0}^{L-1} \mu(T^{-j}\homsp_{>s})
 = \frac1L \sum_{j=0}^{L-1} \mu(\homsp_{\leq s}\cap T^{-j}\homsp_{>s}).
\]
since~$\mc W\subset\homsp_{\leq s_3}$. For $x\in\homsp$ we set
\[
 V_x \sceq \big\{ j\in [0,L-1] \ \big\vert\ T^jx\in\homsp_{>s}\big\}.
\]
In the following let $\varrho>0$ be a constant depending on $s$, to be fixed below. Then, it follows
\begin{align*}
\frac1L&\sum_{j=0}^{L-1} \mu(\homsp_{\leq s} \cap T^{-j}\homsp_{>s}) = \frac1L \sum_{n=1}^L n\mu(\{x\in\homsp_{\leq s} \mid |V_x| = n\} )
\\
& = \frac1L \sum_{n=1}^{\lceil \varrho L\rceil-1} n \mu(\{x\in\homsp_{\leq s} \mid |V_x| = n\} ) + \frac1L \sum_{n=\lceil\varrho L\rceil}^L n \mu(\{x\in\homsp_{\leq s} \mid |V_x| = n\} )
\\
& \leq \frac1L \left( \lceil\varrho L\rceil -1\right) \mu(\homsp_{\leq s}) + \frac1L\cdot L \cdot \mu(\{x\in\homsp_{\leq s} \mid |V_x| \geq \varrho L \} )
\\
& \le \varrho + \mu(\{x\in\homsp_{\leq s} \mid |V_x| \geq \varrho L \} ).
\end{align*}
Using Lemmas~\ref{numberpartition} and \ref{numberZbox} for sufficiently large $L$ allows us to estimate the latter term. 
Combining this with the estimate in Lemma~\ref{boxmass} we get
\begin{align*}
\mu(\{x&\in\homsp_{\leq s} \mid |V_x| \geq \varrho L \} ) \leq \mu\left( \bigcup_{|V| \geq \varrho L} Z_{s,L}(V) \right)
\\
& \leq c_{r,\mc W} e^{f(s) L} e^{c(s)L + h_m(T) (L-\frac12\varrho L)} \cdot 
c_\mu r^{\dim G-2\eps_0} e^{\left( -h_m(T) + 2\eps_0 \right)L}
\\
& \leq c'e^{(\wt c(s) + 2\eps_0 - \frac12h_m(T)\varrho)L},
\end{align*}
where $c'$ is a constant depending on $r$ but not on $s$ or $L$,  and $\wt c(s) \sceq f(s) + c(s)$ tends to $0$ as $s\to\infty$.
If we choose~$\varrho$ so that the exponent is negative, i.e.\ with 
\[
\varrho=\varrho(s) > \frac{2\wt c(s)+4\eps_0}{h_m(T)},
\]
then 
\[
 \eps(L)=\mu(\{x\in\homsp_{\leq s} \mid |V_x| \geq \varrho(s) L \} ) \to 0 \quad\text{as $L\to \infty$}.
\]
Therefore, for sufficiently large $s$ and $L$, we have
\[
 \mu_L(\homsp_{> s}) \leq \varrho(s) + \eps(L).
\]
In turn,
\[
 \mu_L(\homsp_{\leq s}) \geq 1- (\varrho(s) + \eps(L)).
\]
Letting~$L\to\infty$ along the subsequence that gives~$\nu$ as the limit we obtain
\[
 \nu(\homsp_{\leq s}) \geq 1- \varrho(s),
\]
and
\[
 \nu(\homsp) \geq 1-\varrho\quad \text{for all $\varrho>\frac{ 4 \eps_0}{h_m(T)}.$}
\]
This proves the claim if one recalls that~$\delta_0\leq\frac{h_m(T)}{4}$ and~$\eps_0=\frac{\delta_0}{20}$.
\end{proof}

Our next goal is to use~$h_{\frac{\nu}{\nu(\homsp)}}(T)\leq h_m(T)-\delta_0$ to
give an upper bound on the number of Bowen $L$-balls needed to cover $\mc W$.
Let $\mc E_1(\homsp)^T$ denote the space of $T$-invariant ergodic probability measures on $\homsp$.

\begin{prop}\label{effectcover}
Let $\eps>0$. There exist $L_1\in\N$ and $Y \subseteq \homsp$ with
$\nu(Y)>\nu(\homsp)-\eps$ such that for all $L\geq  L_1$, the set $Y$ can be covered with
\[
 e^{(h_m(T)-\delta_0 + \eps)L}
\]
Bowen $L$-balls (with centers in $Y$). Here we may use a radius parameter~$r=r(\eps)$ in the definition
of the Bowen balls such that~$10er$ is an injectivity radius on the compact set~$\bar{Y}$.  
\end{prop}

\begin{proof}
We normalize the measure $\nu$ to 
\[
 \sigma\sceq \frac{\nu}{\nu(\homsp)}.
\]
By the ergodic decomposition of $\sigma$ and since $\sigma(\open)=0$ we find a
subset $\homsp'\subseteq \ok'$ with $\sigma(\homsp') = 1$ and a measurable map $\homsp'\to \mc E_1(\homsp)^T$, $x\mapsto \sigma_x$, such that
\[
 \sigma = \int_{\homsp'} \sigma_x\, d\sigma(x)
\]
and $\sigma_x(\ok') = 1$ for all $x\in\homsp'$. Let $\mc P$ be any countable partition of $\homsp$ with finite partition entropy $H_\sigma(\mc P) < \infty$. For any $n\in\N$ and any $x\in\homsp$ let $[x]_{\mc P_0^{n-1}}$ denote the partition element in $\mc P_0^{n-1}$ which contains $x$. 

We want to find a lower estimate of $\sigma([x]_{\mc P_0^{n-1}})$. For this, let 
\[
 I_\sigma(\mc P_0^{n-1})(x) \sceq - \log \sigma\left( [x]_{\mc P_0^{n-1}}\right)
\]
denote the information function of $\mc P_0^{n-1}$. By the Shannon-McMillan-Breiman Theorem (see e.g.~\cite[Theorem~3.2]{ELW_book}) there exists a subset $\homsp''\subseteq\homsp'$ with $\sigma(\homsp'') = 1$ such that 
\[
 \frac1n I_\sigma(\mc P_0^{n-1})(x) \to h_{\sigma_x}(T,\mc P) \quad \text{as $n\to\infty$}
\]
for all $x\in\homsp''$ and in $L^1$. 

Fix some $\eps>0$. By the above there 
exists a subset $Y_1 \subseteq \homsp''$ with $\sigma(Y_1)>1-\eps$ and $L_1\in\N$ such that for all $L\geq L_1$ and all $x\in Y_1$ we have
\[
 \frac{1}{L}I_\sigma\left(\mc P_0^{L-1}\right)(x) < h_{\sigma_x}(T,\mc P) + \eps.
\]
From Lemma~\ref{univgap} (note that $\supp\sigma_x \subseteq \ok'$ for $x\in \homsp'$) and the definition of the dynamical entropy it follows that
\[
 h_{\sigma_x}(T,\mc P)\leq h_{\sigma_x}(T) \leq h_m(T) -\delta_0
\]
for each $x\in\homsp'$. Thus, for any $L\geq L_1$ and $x\in Y_1$ we have
\[
 -\frac1L \log\sigma\left( [x]_{\mc P_0^{L-1}}\right) < h_m(T)-\delta_0 + \eps.
\]
Hence
\begin{equation}\label{est1}
 \sigma\left( [x]_{\mc P_0^{L-1}} \right) \geq e^{-L\left(h_m(T)-\delta_0+\eps\right)}.
\end{equation}

We now pick a more specific partition as follows. A slight modification of the proof of \cite[7.53]{Einsiedler_Lindenstrauss} (we provide more details in Section~\ref{modifications}) allows us to choose a relatively compact subset $Q\subseteq \homsp$ with $\sigma(Q)>1-\eps$ such that
there exists a countable partition $\mc P$ of $\homsp$ with $H_\sigma(\mc P)<\infty$ such that\footnote{The 
construction of the partition $\mc P$ takes into account any previously
fixed injectivity radius~$r>0$ on~$Q$, and the Bowen balls in \eqref{est2} will use this as the radius parameter.} 
\begin{equation}\label{est2}
 [x]_{\mc P_0^{L-1}} \subseteq xB_L
\end{equation}
for all $x\in Q$ and all $L\in \N$. 

Set $Y\sceq Y_1\cap Q$. Then $\sigma(Y)>1-2\eps$. Let $L\geq L_1$. 
For any~$x\in Y$ the formula \eqref{est1} gives a lower bound
on the $\sigma$-measure of the partition element that contains~$x$. It follows that we get the upper bound~$e^{L(h_m(T)-\delta_0+\eps)}$
for the number of distinct (and hence disjoint) elements that intersect~$Y$ nontrivally.
For each of those we pick an element in~$Y$ to use as the center for a Bowen~$L$-ball, which then covers the associated
partition element by \eqref{est2}. These Bowen~$L$-balls now cover~$Y$ as required.
\end{proof}

\begin{lemma}\label{manyinY}
There exist $Y\subseteq \homsp$ and $L_1\in \N$ with the following properties.
Let  $L_0\geq L_1$ and choose a cover by Bowen~$L_0$-balls
as in Proposition~\ref{effectcover}. Denote the closure of the union of the cover by~$Y_0$.
Then there exist infinitely many $K$ with the property that
\begin{equation}\label{eq:manyinY}
	\frac1K\sum_{k=0}^{K-1} T_*^{kL_0}\mu(Y_0a_e^{-n}) > 1-\frac{ 5\eps_0}{h_m(T)}
\end{equation}
for some $n\in\{0,\ldots, L_0-1\}$. 
\end{lemma}

\begin{proof}
Let $\eps\in (0,1)$ (determined below).
Let~$(Y,L_1)$ be as in Proposition~\ref{effectcover}.
Let $L_0\geq L_1$ and define~$Y_0$
as in the statement of the lemma. As~$Y_0$ is closed and by definition of $\nu$ we have
\[
\liminf \frac1L \sum_{j=0}^{L-1}T^j_*\mu(Y_0) \geq \nu(Y_0),
\]
where $\liminf$ is taken over a subsequence of $L$'s such that $(\mu_L)$ converges to $\nu$ (cf.\@ the definition of $\nu$ and \eqref{muL}).
Since $Y_0\supseteq Y$ we have $\nu(Y_0)\geq\nu(Y) > \nu(\homsp) - \eps$. Hence there exist infinitely many~$L$ with
\[
 \frac1L\sum_{j=0}^{L-1}T^j_*\mu(Y_0) > \nu(\homsp)-2\eps.
\]
We divide~$L$ by~$L_0$ with remainder, write~$K=\lceil\frac{L}{L_0}\rceil$ and obtain 
\begin{align*}
\frac1L \sum_{j=0}^{L-1}T^j_*\mu(Y_0) & = \frac1L \left( \sum_{n=0}^{L_0-1} \sum_{k=0}^{K-1} T_*^{kL_0+n}\mu(Y_0) + \sum_{j=L_0K}^{L-1}T^j_*\mu(Y_0)\right)
\\
& =  \frac{K}{L} \sum_{n=0}^{L_0-1}\cdot\frac{1}{K} \sum_{k=0}^{K-1} T^{kL_0+n}_*\mu(Y_0) + \frac1L \sum_{j=L_0K}^{L-1} T^j_*\mu(Y_0).
\end{align*}
The last sum has at most $L_0$ summands and so converges to~$0$ as~$L\to\infty$. 
Moreover,~$\frac{K}L$ converges to~$\frac1{L_0}$. 
Therefore, for large enough~$L$ from our sequence we must have
\[
 \nu(\homsp)-3\eps < \frac1{L_0}\sum_{n=0}^{L_0-1} \frac1K\sum_{k=0}^{K-1} T^{kL_0+n}_*\mu(Y_0).
\]
Hence we find $n\in\{0,\ldots, L_0-1\}$ such that 
\[
 \frac1K\sum_{k=0}^{K-1} T^{kL_0+n}_*\mu(Y_0) > \nu(\homsp)-3\eps.
\]
Proposition~\ref{boundnu} now gives
\[
\frac1K\sum_{k=0}^{K-1} T_*^{kL_0}\mu(Y_0a_e^{-n}) > 1-\frac{ 4\eps_0}{h_m(T)}-3\eps.
\]
Choosing~$\eps>0$ sufficiently small, the lemma follows.
\end{proof}

\begin{prop}\label{counting}
Assuming that~$L_0$ in Lemma~\ref{manyinY} is sufficiently big there exist infinitely many~$K$  such that a~$\mu$-proportion of at least~$1/2$ of the set~$\mc W$  can be covered with
\[
e^{(h_m(T)-\frac78\delta_0+2\eps_0)KL_0}
\]
Bowen $L_0K$-balls. 
\end{prop}

\begin{proof}
Let~$\eps=\eps_0>0$ and apply Proposition~\ref{effectcover} to obtain~$L_1$. Let~$L_0\geq L_1$ (below we will
give another constraint for~$L_0$) and let~$Y_0$ be the closure of the union of the Bowen~$L_0$-balls covering~$Y$ as in Lemma~\ref{manyinY}. Fixing~$L_0$ there are infinitely many~$K$ for which the conclusion~\eqref{eq:manyinY} of Lemma~\ref{manyinY} holds. Fix one such~$K$ and the corresponding~$n$.
For a given~$y\in \mc W$ we define
\[
V(y)\sceq \{k\in[0,K)\mid T^{n+kL_0}(y)\in Y_0\}.
\]
We fix one  subset~$ V$ of~$[0,K)$ and define
\[
\mc Z (V) \sceq \{y\in\mc W\mid V(y)= V_0\}.
\] 
We claim that we can cover $\mc Z(V)$ with 
\begin{equation}\label{reallyneeded}
 c_{r,\mc W}b_1 e^{h_m(T)n} b_1^{K}e^{|V|(h_m(T)-\delta_0+\eps_0)L_0}e^{(K-|V|)h_m(T)L_0}
\end{equation}
Bowen~$KL_0$-balls, where $b_1$ is a constant independent of $K,L,V$ and $s$.

We may phrase the conclusion Lemma~\ref{manyinY} (in the case where~$n=0$) by saying that most 
points in~$\mc W$ spend a lot of time in relatively few Bowen~$L_0$-balls under the orbit w.r.t.~$T^{L_0}$.
We will use this together with a concatenation procedure to bound how many Bowen~$L_0K$-balls
are needed to cover a portion of~$\mc W$. To ensure that we always work with Bowen balls with the same radius
we have to multiply the number of possibilities with an extra factor of~$b_1$ at each concatenation step 
resulting  in the~$b_1^K$-factor (and need the~$b_1 e^{h_m(T)L_0}$-factor to handle the case where~$n\neq 0$).

More precisely, let~$b_1$ be an upper bound for the number of Bowen~$L$-balls ~$gB_L$ (with parameter~$r$) 
that are needed to cover~$B_L(2r)$ independent of~$L$. The existence of~$b_1$ follows using a maximal
collection of pairwise disjoint Bowen~$L$-balls~$gB_L(r/2)$ with~$g\in B_L(2r)$ and Lemma~\ref{massBowen}. 
The simple concatenation step is now as follows. If~$y_1 B_{L_1}$ and~$y_2 B_{L_2}$
are two Bowen balls such that $y_2\in Y_0$, then~$y_1 B_{L_1}\cap (y_2 \overline{B_{L_2}}a_e^{-L_1})$ 
can be covered by~$ b_1$ Bowen~$L_1+L_2$-balls. Indeed take
some~$y\in y_1 B_{L_1}\cap (y_2 \overline{B_{L_2}}a_e^{-L_1}) $ 
and we get 
\[
(y_1 B_{L_1}(r))\cap (y_2 \overline{B_{L_2}(r)}a_e^{-L_1})\subseteq y B_{L_1+L_2}(2r)
\]
and the claim follows from the definition of~$b_1$. For the inclusion in the last step we need to know that~$2er$ is an injectivity radius at~$y_2$, which we know as~$y_2\in Y_0$ is assumed.

We may also assume that $b_1e^{h_m(T)L_2}$ is an upper bound on the number of~$(L_1+L_2)$-Bowen balls that are needed to cover~$B_{L_1}$ (choose a pairwise disjoint collection of Bowen~$(L_1+L_2)$-balls~$gB_{L_0}(r/2)$ with~$g\in B_{L_1}$ and apply Lemma~\ref{massBowen}), and note that in the case where~$L_1=0$ we may define~$B_0$ to be the~$r$-ball around the identity.
The second possible step (which we will call an extension step) then concerns the case where the first Bowen ball~$y_1 B_{L_1}$ is given, but we have no additional information about~$y_1a_e^{L_1}$. In this case we cover $y_1B_{L_1}$
by~$b_1e^{h_m(T)L_2}$  Bowen~$(L_1+L_2)$-balls.

Using the above concatenation and extension steps iteratively we see that~$\mc Z(V_0)$ can be covered by \eqref{reallyneeded}-many Bowen $KL_0$-balls. Indeed we first cover~$\mc W$ by~$c_{r,\mc W}$
balls of radius~$r$ (or equivalently by Bowen~$0$-balls) and use the extension step with~$L_1=0$ and~$L_2=n$.
Starting with~$k=0$ and if~$k\in V$ we use the concatenation for~$L_1=n+kL_0$ and~$L_2=L_0$
using all possibilities for the Bowen~$L_0$-balls appearing in the definition of~$Y_0$ in Lemma~\ref{manyinY}, or use
the extension step if~$k\notin V$. For~$k=K$ we obtain Bowen~$n+KL_0$-balls and the claim follows.

Note that the above estimate is monotonically decreasing as~$|V|$ increases.
Fix some~$\alpha\in(0,1)$ and consider all subset $V\subseteq [0,K)$ with $|V|\geq K\alpha$. The above now gives that
\begin{equation}\label{niceset}
 \bigcup_{|V|\geq K\alpha}\mc Z(V)
\end{equation}
is covered by
\[
 c_{r,\mc W}b_1^{1+K} e^{h_m(T)L_0} e^{\alpha(h_m(T)-\delta_0+\eps_0)KL_0}e^{(1-\alpha)h_m(T)KL_0} \sum_{k=\lfloor\alpha K\rfloor}^{K}{K \choose k}
\]
Bowen~$KL_0$-balls. It is well know that Sterlings formula can be used to estimate the last sum. In fact,
using Sterlings formula one obtains~${K\choose k}=e^{K H(\frac{k}K)+o(K)}$ 
as~$K\to\infty$ independent of~$k\in[0,K]$ and where
\[
H(x) = -x\log x-(1-x)\log x.
\]
Note that there are at most~$K=e^{o(K)}$ summands.
Bounding $H$ by~$\log 2$ and taking the sum we obtain the upper bound
\begin{equation}\label{plussterling1}
	c_{r,\mc W}b_1^{1+K} e^{h_m(T)L_0} e^{\alpha(h_m(T)-\delta_0+\eps_0)KL_0}e^{(1-\alpha)h_m(T)KL_0}e^{K(\log 2+1)}
\end{equation}
if~$K$ is sufficiently large. Note that the product of the constant and the factors containing $K$ or $L_0$ but not both
can be bounded by~$e^{\eps_0 L_0K}$ if~$L_0$ and~$K$ are sufficiently big. 
We now choose~$\alpha=1-\frac{10\eps_0}{h_m(T)}$ and recall that~$\delta_0\leq\frac{h_m(T)}4$ and~$\eps_0=\frac{\delta_0}{20}$,
which gives~$\alpha\geq \frac78$.  Putting these estimates into~\eqref{plussterling1} we get
\begin{equation}\label{plussterling}
	 e^{\frac78(h_m(T)-\delta_0+\eps_0)KL_0}e^{\frac18h_m(T)KL_0}e^{\eps_0KL_0}\leq e^{(h_m(T)-\frac78\delta_0+2\eps_0)KL_0}
\end{equation}
for the upper bound on the number of Bowen~$KL_0$-balls.

We finally apply Lemma~\ref{manyinY}
to bound the set that is not covered by the Bowen balls in \eqref{plussterling}. In fact, using the above notation
we may rewrite \eqref{eq:manyinY} to get
\begin{align*}
 \sum_{k=0}^K \frac{k}K\mu\bigl(\bigl\{y\in\mc W\mid |V(y)|=k\bigr\}\bigr)&>1-\frac{5\eps_0}{h_m(T)}\intertext{or}
 \sum_{k=0}^K \bigl(1-\frac{k}K\bigr)\mu\bigl(\bigl\{y\in\mc W\mid |V(y)|=k\bigr\}\bigr)&<\frac{5\eps_0}{h_m(T)}.
\end{align*}
The latter implies
\begin{align*}
 (1-\alpha) \mu\bigl(\bigl\{y\in\mc W\mid |V(y)|<\alpha K \bigr\}\bigr)&<\frac{5\eps_0}{h_m(T)}\intertext{or}
 \mu\bigl(\bigl\{y\in\mc W\mid |V(y)|<\alpha K \bigr\}\bigr)&<\frac12.
\end{align*}
From this and since \eqref{plussterling} gives an upper bound for the number of Bowen~$KL_0$-balls
needed to cover~\eqref{niceset} we obtain the proposition.
\end{proof}

\begin{proof}[Proof of Theorem~\ref{thmdiscrete}]
Let~$L_0$ be sufficiently big for Proposition~\ref{counting} to hold. Together with Lemma~\ref{boxmass}
we obtain
\begin{align*}
 \frac12=\frac12\mu(\mc W) &\leq c_\mu r^{\dim G - 2\eps_0} e^{\left( -h_m(T) + 2\eps_0\right)KL_0}e^{(h_m(T)-\frac78\delta_0+2\eps_0)KL_0}
\\
&=\tilde c e^{(4\eps_0-\frac78\delta_0)KL_0}
\end{align*}
for some constant $\tilde c$ independent of $K$, and for infinitely many~$K$. Note that $4\eps_0-\frac78\delta_0<0$, which leads to a contradiction. This completes the proof.
\end{proof}

\section{Modification of the partition from \cite{Einsiedler_Lindenstrauss}}\label{modifications}

The partition $\mc P$ in the proof of Proposition~\ref{effectcover} is essentially identical with the partition in \cite[7.51]{Einsiedler_Lindenstrauss}. However our situation is slightly different to the one in \cite{Einsiedler_Lindenstrauss} for which reason we outline the necessary steps of proof. The differences are as follows:
\begin{itemize}
\item The measure $\sigma$ is not necessarily ergodic, and we cannot reduce to an ergodic situation as in \cite{Einsiedler_Lindenstrauss}.
\item We want to find a big set $Q$ on which the inclusion relation 
\[
 [x]_{\mc P_0^{N-1}} \subseteq x B_N
\]
holds for all $N\in\N$ (in \cite{Einsiedler_Lindenstrauss}, the mass of $Q$ does not matter as long as it is positive, and it is asked for the (weaker) relation
\[
 [x]_{\mc P_0^\infty} \subseteq x B_N
\]
for $N\in\N$ such that $xa^N\in Q$).
\item We do not need the lower bounds from \cite{Einsiedler_Lindenstrauss} on the atoms, which allows us to simplify $\mc P$ a bit.
\end{itemize}

The construction of $\mc P$ and the proofs of its properties proceeds in a number of steps.

\begin{enumerate}[1)]
\item Pick a subset $Q\subseteq\homsp$ which is open, relatively compact and which has mass $\sigma(Q) > 1-\eps$. Pick an injectivity radius $r$ of $Q$ and decompose $Q$ (up to measure zero) into finitely many subsets $Q_1,\ldots, Q_R$ with positive measure such that each of these subsets is contained in a Bowen ball with parameter $r/16$. Set
\[
 \mc Q \sceq \{ Q_1, \ldots, Q_R, \homsp\setminus Q\}.
\]
\item For each $i=1,\ldots, R$  and each $j\in\N$ let $Q_{ij}$ be the set of points $x\in Q_i$ which return to $Q$ with the $j$-th step but not earlier, that is
\[
 Q_{ij} = \{ x\in Q_i \mid xa_e^j\in Q,\ xa_e^\ell\notin Q\ \text{for $\ell=1,\ldots,j-1$}\}.
\]
Set
\[
 \wt{\mc Q}\sceq \{ \homsp\setminus Q,\ Q_{ij} \mid i=1,\ldots, R,\ j\in\N\}.
\]
\item We now decompose the partition elements $Q_{ij}$ into smaller subsets. For this we remark that each ball $x B^G_{r/16}$ can be covered with 
\[
 ce^{\kappa j}
\]
balls with parameter $e^{-j} r/8$ (here $c=c(G)$ is a constant and $\kappa= \dim G$ would work). Let $i\in\{1,\ldots, R\}$ and $j\in\N$.  Note that $Q_{ij} \subseteq x_i B^G_{r/16}$ for some $x_i\in Q$. We fix a cover $B_1,\ldots B_{N(j)}$ with $N(j)\leq ce^{\kappa j}$ by balls with parameter $e^{-j} r/8$. We define
\begin{align*}
Q_{ij1} &\sceq Q_{ij} \cap B_1
\\
Q_{ij2} &\sceq Q_{ij} \cap \big(B_2\setminus B_1\big)
\\
Q_{ij3} &\sceq Q_{ij} \cap \big(B_3\setminus\big(B_1\cup B_2\big)\big)
\end{align*}
and so on. Set
\[
\mc P \sceq \{  Q_{ijk},\ \homsp\setminus Q\mid i=1,\ldots,R,\ j\in\N,\ k=1,\ldots,N(j)\}.
\]
\end{enumerate}

\begin{lemma}
For $\sigma$-almost every $x\in Q$ we have
\[
 [x]_{\mc P_0^{N-1}} \subseteq xB_N
\]
for all $N\in\N$.
\end{lemma}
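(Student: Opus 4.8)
The plan is to fix a $\sigma$-typical point of $Q$ and track its orbit, resetting the bookkeeping every time the orbit returns to $Q$; the only global input needed is Poincar\'e recurrence. Since $\sigma$ is a $T$-invariant probability measure with $\sigma(Q)>0$, there is a $\sigma$-conull set $Q'\subseteq Q$ such that every $x\in Q'$ satisfies $T^nx\in Q$ for infinitely many $n\geq 1$. Fix such an $x$ and let $0=m_0<m_1<m_2<\cdots$ enumerate the indices $n\geq 0$ with $T^nx\in Q$, and put $j_l\sceq m_{l+1}-m_l$. By the construction of $\mc P$ in steps~1)--3), the atom of $\mc P$ containing $T^{m_l}x$ is one of the pieces $Q_{i_lj_lk_l}$ (here $j_l$ appears because $T^{m_l}x$ returns to $Q$ for the first time after exactly $j_l$ steps), and this atom is contained in a single box of radius $e^{-j_l}r/8$.

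Now fix $N\in\N$ and $y\in[x]_{\mc P_0^{N-1}}$. For each $l$ with $m_l\leq N-1$ we have $T^{m_l}y\in[T^{m_l}x]_{\mc P}=Q_{i_lj_lk_l}$, so $T^{m_l}x$ and $T^{m_l}y$ lie in a common box of radius $e^{-j_l}r/8$; a routine reordering of the $U$- and $NAM$-coordinates of the difference of two such points (using that $U$ is a group, that conjugating a small $U$-element by a bounded $NAM$-element stays small, and that the $NAM$-parts differ by a small $NAM$-element) shows, with the specific constants $r/16$ and $r/8$, that $T^{m_l}y=(T^{m_l}x)h_l$ for some $h_l\in D^U_{\rho_l}D^{NAM}_{\rho_l}$ with $\rho_l\leq e^{-j_l}r/4$. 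For $m_l\leq k<\min(m_{l+1},N)$ set $t\sceq k-m_l\in[0,j_l)$ and compute
\[
 T^ky=(T^{m_l}y)a_e^{t}=(T^{m_l}x)h_la_e^{t}=(T^kx)\,\big(a_e^{-t}h_la_e^{t}\big).
\]
Conjugation by $a_e^{-t}$ with $0\leq t<j_l$ expands the $U$-component by at most $e^{j_l}$ (cf.\ Lemma~\ref{boxmass}) and keeps the $NAM$-component inside $D^{NAM}_{\rho_l}$ (as in the proof of Lemma~\ref{coverimp}), hence $a_e^{-t}h_la_e^{t}\in D^U_{e^{j_l}\rho_l}D^{NAM}_{\rho_l}\subseteq D^U_{r/4}D^{NAM}_{r/4}\subseteq B^G_r$, i.e.\ $d(T^kx,T^ky)<r$. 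Since $m_0=0$ and the $m_l$ are infinite, every $k\in\{0,\dots,N-1\}$ lies in one of these intervals, so $d(T^kx,T^ky)<r$ for $k=0,\dots,N-1$. In particular $d(x,y)<r$ is below the injectivity radius of $Q$, so the element $g$ with $y=xg$ is well defined; the above shows $a_e^{-k}ga_e^{k}\in B^G_r$ for $k=0,\dots,N-1$, i.e.\ $g\in B_N$ and $y\in xB_N$. As $x\in Q'$ was chosen once and for all, independently of $N$, we conclude $[x]_{\mc P_0^{N-1}}\subseteq xB_N$ for all $N\in\N$ and all $x$ in the $\sigma$-conull set $Q'\subseteq Q$.

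The main obstacle is the geometric bookkeeping in the middle paragraph: one must verify that after passing from ``two points in a common $e^{-j}r/8$-box'' to ``their difference is a box element'', and then applying the $e^{j}$ expansion produced by $t\leq j$ steps of $a_e^{-t}$-conjugation, the resulting element lands inside $B^G_r$ rather than merely inside $B^G_{Cr}$ for some constant $C>1$. This is precisely what the nested radii $r/16$, $r/8$, $e^{-j}r/8$ in the construction of $\mc P$ (following \cite{Einsiedler_Lindenstrauss}) are designed to secure, and it is the only point where those numerical constants are used; everything else is a straightforward unwinding of definitions.
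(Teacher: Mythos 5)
Your proposal is correct and takes essentially the same route as the paper's proof: Poincar\'e recurrence, the refined atoms $Q_{ijk}$ of radius $e^{-j}r/8$ at each return to $Q$, the at-most-$e^{j}$ expansion of the $U$-component under conjugation by $a_e^{-t}$ along an excursion of length $j$, and the injectivity radius of $Q$ to control the relating group element. The one step you should state explicitly is the inductive identification $h_l = a_e^{-m_l} g\, a_e^{m_l}$ at each return time $m_l$ (both elements lie in $B_r^G$ and relate $T^{m_l}x\in Q$ to $T^{m_l}y$, so they coincide by the injectivity radius), since without it ``$d(T^kx,T^ky)<r$'' only yields \emph{some} small relating element rather than $a_e^{-k}ga_e^{k}\in B_r^G$; this is precisely the step the paper makes explicit before its closing induction.
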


\begin{proof}
By the Poincar\'e Recurrence Theorem, $\sigma$-almost every point in $Q$ returns infinitely often to $Q$. We restrict to these point and pick such an $x$. Let $N\in \N$ and let 
\[
 y \in  [x]_{\mc P_0^{N-1}}.
\]
There exist $i\in\{1,\ldots, R\}$, $j\in\N$ and $k\in\{1,\ldots, N(j)\}$ such that $x, y\in Q_{ijk}$. Thus, there exists $g\in B^G_{e^{-j}r/4}$ such that $y=xg$. Then for any $\ell=0,\ldots, j$ we have
\[
 g \in a_e^\ell B^G_{e^{\ell-j}r/4} a_e^{-\ell} \subseteq a_e^\ell B_r^G a_e^{-\ell}.
\]
In the case $j \geq N-1$ it follows immediately
\[
 y \in xB_N.
\]
Suppose that $j < N-1$. Then we find $i_2,j_2,k_2$ such that $xa_e^j, ya_e^j \in Q_{i_2j_2k_2}$. Since
\[
 ya_e^j = xa_e^j \big(a_e^{-j}ga_e^j\big),
\]
and $a_e^{-j}ga_e^j \in B_r^G$ and $r$ is an injectivity radius, it follows that actually
\[
 a_e^{-j} g a_e^{j} \in B^G_{e^{-j_2}r/4}.
\]
Now reasoning inductively as above finally shows $y \in xB_N$.
\end{proof}

\begin{lemma}
The partition $\mc P$ has finite partition entropy $H_\sigma(\mc P)$. 
\end{lemma}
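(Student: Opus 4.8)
The plan is to estimate $H_\sigma(\mc P)=-\sum_{P\in\mc P}\sigma(P)\log\sigma(P)$ directly, by combining the combinatorial control on the number of atoms $Q_{ijk}$ built into the construction with a Kac-type bound on the first-return time to $Q$. The atom $\homsp\setminus Q$ contributes only the single finite term $-\sigma(\homsp\setminus Q)\log\sigma(\homsp\setminus Q)$, so the whole point is to control the countable family $\{Q_{ijk}\mid i=1,\dots,R,\ j\in\N,\ k=1,\dots,N(j)\}$.

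First I would record two facts. \emph{(a) Combinatorial bound:} by step 3) of the construction $N(j)\le c e^{\kappa j}$, so for each fixed $j$ there are at most $Rc\,e^{\kappa j}$ triples $(i,j,k)$. \emph{(b) Kac estimate:} since $\sigma$ is a $T$-invariant probability measure and $T$ is invertible, Poincar\'e recurrence shows that $\sigma$-a.e.\ point of $Q$ returns to $Q$; hence the sets $Q_{ij}$ partition $Q$ up to a null set, and the first-return time $R_Q$ to $Q$ equals $j$ on $Q_{ij}$. The standard Kakutani-tower argument (the sets $T^iQ_{ij}$ for $0\le i<j$ are pairwise disjoint and $T$ preserves $\sigma$) gives $\sum_{i,j} j\,\sigma(Q_{ij})=\int_Q R_Q\,d\sigma\le\sigma(\homsp)=1$. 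Writing $Q_{ij}=\bigsqcup_k Q_{ijk}$ this yields $\sum_{i,j,k} j\,\sigma(Q_{ijk})\le 1$. Note that neither (a) nor (b) uses ergodicity of $\sigma$; in (b) one only loses the equality that ergodicity would provide.

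Now I would invoke the elementary bound that for any probability vector $(p_P)_{P\in\mc P}$ and any weights $(a_P)$ with $Z\sceq\sum_P e^{-a_P}<\infty$ one has $-\sum_P p_P\log p_P\le\sum_P p_P a_P+\log Z$ (Gibbs' inequality, i.e.\ nonnegativity of relative entropy with respect to the Gibbs vector $e^{-a_P}/Z$). Assign $a_P\sceq 0$ for $P=\homsp\setminus Q$ and $a_{Q_{ijk}}\sceq(\kappa+2)j+c_0$, where $c_0$ is a constant with $c_0\ge\log(Rc)$. Then by (a), $Z\le 1+\sum_{j\ge1}Rc\,e^{\kappa j}e^{-(\kappa+2)j-c_0}\le 1+\sum_{j\ge1}e^{-2j}<\infty$, and by (b), $\sum_P p_P a_P=\sum_{i,j,k}\sigma(Q_{ijk})\big((\kappa+2)j+c_0\big)\le(\kappa+2)\cdot 1+c_0\cdot\sigma(Q)<\infty$. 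Hence $H_\sigma(\mc P)<\infty$.

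The only genuinely delicate point is the Kac estimate in the possibly non-ergodic situation, but the inequality $\int_Q R_Q\,d\sigma\le 1$ holds for every invariant probability measure (only the Kac \emph{identity} requires ergodicity), so this is not an obstacle; the rest is bookkeeping.
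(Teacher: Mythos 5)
Your proof is correct and follows essentially the same route as the paper, whose own argument just records the non-ergodic Kac bound $\sum_{i,j} j\,\sigma(Q_{ij})\le 1$ and otherwise refers to Einsiedler--Lindenstrauss; your two inputs (the combinatorial bound $N(j)\le c e^{\kappa j}$ and that Kac inequality) are exactly the ingredients used there. The only difference is cosmetic: you assemble the entropy estimate explicitly via Gibbs' inequality instead of citing the standard countable-partition entropy lemma, which makes the write-up self-contained but not a genuinely different proof.
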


\begin{proof}
We note that\footnote{Actually there is equality if~$Q\supseteq \homsp_{\leq s_3}$, but this is not needed for the proof.}
\[
 \sum_{i,j} j \sigma(Q_{ij}) \leq 1.
\]
Using this, the proof is parallel to that in \cite{Einsiedler_Lindenstrauss}.
\end{proof}

\bibliographystyle{amsalpha}
\providecommand{\bysame}{\leavevmode\hbox to3em{\hrulefill}\thinspace}
\providecommand{\MR}{\relax\ifhmode\unskip\space\fi MR }
\providecommand{\MRhref}[2]{%
  \href{http://www.ams.org/mathscinet-getitem?mr=#1}{#2}
}
\providecommand{\href}[2]{#2}

\end{document}